\documentclass[a4paper,twoside]{article}

\usepackage{amsthm,amssymb,amsmath,booktabs,graphicx,bm,siunitx}
\usepackage[bookmarks=true,bookmarksopen=true,colorlinks=true,citecolor=blue,linkcolor=blue,urlcolor=blue]{hyperref}
\usepackage[dvipsnames]{xcolor}
\usepackage{xspace}
\usepackage{lmodern}

\usepackage[ulem=normalem,draft,authormarkup=none,deletedmarkup=xout]{changes}

\definechangesauthor[name=Mariano, color=ForestGreen]{MM}
\definechangesauthor[name=Thomas, color=Magenta]{TA}
\definechangesauthor[name=Arnd, color=blue]{AR}

\numberwithin{table}{section}
\numberwithin{equation}{section}
\theoremstyle{plain}
\newtheorem{theorem}{Theorem}[section]
\newtheorem{lemma}[theorem]{Lemma}
\newtheorem{corollary}[theorem]{Corollary}

\theoremstyle{definition}
\newtheorem{definition}[theorem]{Definition}
\newtheorem{example}[theorem]{Example}

\theoremstyle{remark}
\newtheorem{remark}[theorem]{Remark}

\newtheorem{assumption}[theorem]{Assumption}

\newcommand{\Pb}{\mbox{\rm (P)}\xspace}
\newcommand{\Pbh}{\mbox{\rm (P$_h$)}\xspace}
\newcommand{\conormal}{n\!}
\newcommand{\tikhonov}{\kappa}  
\newcommand{\mA}{\mathcal{A}}   
\newcommand{\mC}{\mathcal{C}}   
\newcommand{\mD}{\mathcal{D}}   
\newcommand{\mE}{\mathcal{E}}   
\newcommand{\mH}{\mathcal{H}}   
\newcommand{\mI}{\mathcal{I}}   
\newcommand{\mL}{\mathcal{L}}   
\newcommand{\mP}{\mathcal{P}}   
\newcommand{\mQ}{\mathcal{Q}}   
\newcommand{\mS}{\mathcal{S}}   
\newcommand{\mT}{\mathcal{T}}   
\newcommand{\tr}{\textsc{tr}}   
\newcommand{\mZ}{\mathcal{Z}}   

\newcommand{\ps}{{\hat p}}      
\newcommand{\dx}{\,\mathrm{d}x}
\newcommand{\dualH}{H^1(\Omega)'}

\newcommand{\x}{S}              
\newcommand{\Wsj}{\prod_{j=1}^m W^{1/2,2}_{\vec\beta}(\Gamma_j)}

\newcommand{\functional}{J}

\title{Dirichlet control problems with energy regularization governed by non-coercive elliptic equations
\thanks{The second and third author were partially supported by MICIU/AEI/10.13039/501100011033/ under research project PID2023-147610NB-I00.}}

\author{Thomas Apel\thanks{Institute of Mathematics and Computer-Based Simulation. Universit\"at der Bundeswehr M\"unchen, 85577 Neubiberg, Germany, {\tt thomas.apel@unibw.de}. \url{https://orcid.org/0000-0003-3642-3956}}
\and Mariano Mateos\thanks{Departamento de Matem\'{a}ticas, Campus de Gij\'on, Universidad de Oviedo, 33203, Gij\'on, Spain, {\tt mmateos@uniovi.es}.
\url{https://orcid.org/0000-0003-3100-412X}}
\and Arnd R\"osch\thanks{Fakult\"at f\"ur Mathematik, Universt\"at Duisburg-Essen, D-45127 Essen, Germany, {\tt arnd.roesch@uni-due.de.}
\url{https://orcid.org/0009-0001-2163-7153}}
}

\begin{document}
\maketitle
\begin{quote}\textbf{Abstract:}    The present study investigates a linear-quadratic Dirichlet control problem governed by a non-coercive elliptic equation posed on a possibly non-convex polygonal domain. Tikhonov regularization is carried out in an energy seminorm. The regularity of the solutions is established in appropriate weighted Sobolev spaces, and the finite element discretization of the problem is analyzed. 
    In order to recover the optimal rate of convergence in polygonal non-convex domains, graded meshes are required. In addressing this particular problem, it is also necessary to introduce a discrete projection in the sense of $H^{1/2}(\Gamma)$ to deal with the non-homogeneous boundary condition.
    A thorough examination of the approximation properties of the discrete controls reveals that the discrete problems are strongly convex uniformly with respect to the discretization parameter.
    All these ingredients lead to optimal error estimates. Practical computational considerations and numerical examples are discussed at the end of the paper.
\end{quote}

\begin{quote}
\textbf{Keywords:}
boundary optimal control,  non-coercive equations, non-convex domains, regularity of solutions, finite element approximation, graded meshes
\end{quote}
\begin{quote}
\textbf{AMS Subject classification: }
49M41; 
35B65, 
65N30 
\end{quote}

\section{Introduction}\label{S1}
Let $\Omega\subset\mathbb R^2$ be a domain with a polygonal boundary $\Gamma$. 
For some $y_d\in L^2(\Omega)$, $u_d\in H^{1/2}(\Gamma)$, and $\tikhonov > 0$ we are interested in the problem
\[\Pb\qquad\min_{u\in H^{1/2}(\Gamma)} \functional(u) = \frac{1}{2}\Vert y_u-y_d\Vert_{L^2(\Omega)}^2 + \frac{\tikhonov}{2}\vert u-u_d\vert_{H^{1/2}(\Gamma)}^2,\]
where $\vert\cdot\vert_{H^{1/2}(\Gamma)}$ is a seminorm in $H^{1/2}(\Gamma)$ and $y_u\in H^1(\Omega)$ is the solution of the state equation
\begin{equation}\label{E1.1}
  -\nabla\cdot(A(x)\nabla y) +  b(x)\cdot\nabla y + a_0(x) y = 0\text{ in }\Omega,\qquad y=u\text{ on }\Gamma.
\end{equation}
\begin{remark}
  Problems with a source term $f\in H^{-1}(\Omega)$ can be reduced to the form of problem $\Pb$ computing $y_f\in H^1_0(\Omega)$ the unique solution of $-\nabla\cdot(A(x)\nabla y) +  b(x)\cdot\nabla y + a_0(x) y = f\text{ in }\Omega,\ y=0\text{ on }\Gamma$, and redefining $y_d:=y_d-y_f$.
\end{remark}

\medskip
Problem \Pb was first studied in the seminal paper \cite{OfPhanSteinbach2015} for the specific case of the Poisson equation. In that reference $\Omega$ is a convex polygonal domain and $u_d=0$. The seminorm in $H^{1/2}(\Gamma)$ is realized by the seminorm of the harmonic extension $\Vert \nabla\mH u\Vert_{L^2(\Omega)}$, so they study
\begin{align*}
& \min_{u\in H^{1/2}(\Gamma)} \functional(u) = \frac12\Vert y_u - y_d\Vert_{L^2(\Omega)}^2 + \frac{\tikhonov}{2}\Vert \nabla \mH u\Vert_{L^2(\Omega)}^2\\
& -\Delta y = 0\text{ in }\Omega,\ y=u\text{ on }\Gamma.
\end{align*}
The control constrained case is also studied in \cite{OfPhanSteinbach2015}.

Shortly after that, in \cite{Chowdhury_Thirupathi_Nandakumaran_Energy_2017} the following problem was studied:
\begin{align*}
& \min_{z\in H^1(\Omega)} F(z) = \frac12\Vert y_{z_{\vert \Gamma}} - y_d\Vert_{L^2(\Omega)}^2 + \frac{\tikhonov}{2}\Vert \nabla z\Vert_{L^2(\Omega)}^2\\
& -\Delta y = 0\text{ in }\Omega,\ y=z\text{ on }\Gamma.
\end{align*}
The authors of this reference remark that the optimal solution $\bar z$ is a harmonic function, and hence it can be shown that the problems presented in \cite{OfPhanSteinbach2015} and \cite{Chowdhury_Thirupathi_Nandakumaran_Energy_2017} are equivalent; cf. \cite[Remark 2.4]{Chowdhury_Thirupathi_Nandakumaran_Energy_2017}. The control constrained case for the problem studied in \cite{Chowdhury_Thirupathi_Nandakumaran_Energy_2017} was analyzed in \cite{GudiSau2020}.
In these papers, a priori error estimates for a conforming finite element discretization are obtained in convex polygonal domains using quasi-uniform meshes. {\em A posteriori} error estimates in possibly non-convex polygonal domains are also obtained in \cite{Chowdhury_Thirupathi_Nandakumaran_Energy_2017}. Recently, in  \cite{PalGudi2026} the {\em a posteriori} error analysis is extended to problems governed by the more general equation \eqref{E1.1}, provided that the associated bilinear form $\mathfrak{a}(\cdot,\cdot)$---see Section \ref{S2} for the precise definition---is coercive in $H^1_0(\Omega)\times H^1_0(\Omega)$.

The results of \cite{OfPhanSteinbach2015} are extended to a problem governed by the Stokes system in \cite{GongMateosSinglerZhang2022}.
In \cite{Winkler_NM_2020} the governing equation is again the Poisson equation, but now the problem is posed in a possibly non-convex polygonal domain. The discrete approximation is done using a family of quasi-uniform meshes. 

The main novelties of our paper are the following. First, the associated bilinear form $\mathfrak{a}(\cdot,\cdot)$ need not be coercive in $H^1_0(\Omega)\times H^1_0(\Omega)$. Second, we study the discretization using graded meshes and show that for an appropriate grading parameter, we obtain optimal order of convergence. The results obtained in this situation require completely new proofs, techniques and insights. Let us explain them in detail.

In Section \ref{S2} we study the state equation. Since we do not impose coercivity on $\mathfrak{a}(\cdot,\cdot)$, existence and uniqueness of solution of the equation are not immediate. To obtain them we apply appropriate results of our previous works \cite{CMR2020} and \cite{AMR2024}. 

Section \ref{S3} is devoted to the study of the continuous control problem. The proofs of existence and uniqueness of the solution of the optimal control problem in the afore mentioned references use explicitly that the governing equation is the Poisson equation, cf. \cite[Lemma 2.1]{OfPhanSteinbach2015} or the coerciveness of $\mathfrak{a}(\cdot,\cdot)$, cf. \cite[Lemma 2.3]{PalGudi2026}. We are able to show in Lemma \ref{L3.2} that the second derivative of the objective functional is coercive in $H^{1/2}(\Gamma)$, which leads to existence and uniqueness of solution of \Pb under our less restrictive assumptions.

In Section \ref{S4} we exploit the optimality system to deduce the optimal regularity of the solution in weighted Sobolev spaces. The obtained regularity will allow us to prove the optimal order of convergence using graded meshes.

In order to obtain a discretization that allows us solve the problem in practice, we have to discretize in a proper way extension operators $\mE\in \mL(H^{1/2}(\Gamma),H^1(\Omega))$, and particularly the harmonic extension operator. In \cite{Winkler_NM_2020} and \cite{GongMateosSinglerZhang2022}, this is done using the projection $\mP_h\in\mL(H^{1/2}(\Gamma),U_h)$ in the sense of $L^2(\Gamma)$. Here, $U_h$ is the set of discrete controls formed by continuous piecewise linear functions. Unfortunately, the global nature of this projection does not allow us to obtain sufficiently high order error estimates. So we introduce a projection onto $U_h$ in the sense of $H^{1/2}(\Gamma)$. The reader is referred to Section \ref{S5.2} for a discussion of the theoretical and practical implications of this choice. The corresponding discrete harmonic extension is studied in Section \ref{S5.3}. For a thorough study of the properties of the discrete harmonic extension for discrete functions in convex polygonal domains using a quasi-uniform mesh family the reader is referred to \cite{MayRannacherVexler2013}. In our case we have a possibly non-convex domain, graded meshes, and define the discrete extension for every $u\in H^{1/2}(\Gamma)$, so we need to provide new proofs for all our results.

Finally, in Section \ref{S6} we discretize completely the optimal control problem. The reader is referred to Remark \ref{re::R6.2} for a comparison of our discretization with others done in the literature. In Theorem \ref{L6.1} we prove one of our key results: the second derivative of our discrete functional is coercive in $H^{1/2}(\Gamma)$ uniformly with respect to the discretization parameter $h$; see also Remark \ref{re::R6.4}. To obtain the error estimates we insert an intermediate control $u_h^\star$ and apply the coercivity of the second derivative of both the continuous and discrete functionals. Approximations of all the terms lead us to our main result, namely Theorem \ref{th::mainTheorem}, where we obtain  error estimates. Using optimal mesh grading, we prove order $h$ in the energy norm. 

Computational details and two numerical examples are presented in Section \ref{S7} which show the sharpness of  Theorem \ref{th::mainTheorem}.

In \cite{Winkler_NM_2020} higher order error estimates are proved for the quantity $\Vert \bar u -\bar u_h\Vert_{H^{1/2}(\Gamma)}$ provided that the data $y_d$ is a H\"{o}lder function, $u_d=0$ and the domain is convex. This kind of result is not covered by the techniques developed in our paper. 

Besides the afore mentioned references, we also find relevant for the topic the recent papers \cite{ASW2016}, \cite{GongTan2023}, \cite{GanglLoscherSteinbach2025} and \cite{Langeretal2025}. In these papers energy regularization is studied not only for Dirichlet control problems, but also for distributed or Neumann control problems and different discretization schemes are discussed.

\paragraph{Notation} 
Let us introduce notation as in \cite{AMR2024}. Denote by $m$ the number of sides of $\Gamma$ and $\{\x_j\}_{j=1}^m$ its vertices, ordered counterclockwise. For convenience denote also $\x_0=\x_m$ and $\x_{m+1}=\x_1$. We denote by $\Gamma_j$ the side of $\Gamma$ connecting $\x_{j}$ and $\x_{j+1}$, and by $\omega_j\in (0,2\pi)$ the angle interior to $\Omega$ at $\x_j$, i.e., the angle defined by $\Gamma_{j}$ and $\Gamma_{j-1}$, measured counterclockwise. Notice that  $\Gamma_{0}=\Gamma_m$. We use $(r_j,\theta_j)$ as local polar coordinates at $\x_j$, with $r_j=\vert x-\x_j\vert $ and $\theta_j$ the angle defined by $\Gamma_j$ and the segment $[\x_j,x]$. Finally, denote $S:=\{\x_1,\ldots,\x_m\}$.

Throughout the paper we will find several times continuous embeddings $X\hookrightarrow Y$ between pairs of Banach spaces. We will denote $c_i$ a common upper bound of the norms of the these embedding operators.

We will use $y$, $z$ for functions in $H^1(\Omega)$, $u,v$ for functions in $H^{1/2}(\Gamma)$ and Greek letters $\varphi$, $\phi$, $\eta$, $\zeta$ for functions in $H^1_0(\Omega)$. The letters $w,f,g$ will be used for elements that may be in the dual spaces $H^{-1/2}(\Gamma)$, $H^1(\Omega)'$ or $H^{-1}(\Omega)$. We will use calligraphic letters for most of the linear operators.

We will say that $\mE$ is an extension operator if $\mE\in\mL(H^{1/2}(\Gamma),H^1(\Omega))$ is such that the trace of $\mE u$ is $u$ for all $u\in H^{1/2}(\Gamma)$. We will denote $M_{\mE}=\Vert \mE\Vert_ {\mL(H^{1/2}(\Gamma),H^1(\Omega))}$. 
The harmonic extension of $u\in H^{1/2}(\Gamma)$ is the unique function $\mH u \in H^1(\Omega)$ such that
\begin{equation}\label{eq::Hu}
(\nabla\mH u,\nabla \zeta)_\Omega = 0\ \forall \zeta\in H^1_0(\Omega), \quad\mH u = u\text{ on }\Gamma.
\end{equation}
Obviously, $\mH$ is an extension operator.

\begin{remark}\label{re::R1.2}
    Let us notice here that there are several ways to define equivalent norms in $H^{1/2}(\Gamma)$. The norm given by 
\begin{equation}\label{E3.1}\Vert u\Vert_{H^{1/2}(\Gamma)} = \inf\{\Vert z\Vert_ {H^1(\Omega)}:\ z\in H^1(\Omega)\text{ and }z=u\text{ on }\Gamma\}\end{equation}
 is equivalent to $\Vert \mH u\Vert_{H^1(\Omega)}$, to $(\Vert u\Vert ^2_{L^2(\Gamma)}+\Vert \nabla \mH u\Vert ^2_{L^2(\Omega)})^{1/2}$, to the Sobolev-Slobodetskii norm
    \[\left(\Vert u\Vert ^2_{L^2(\Gamma)} + \int_\Gamma\int_\Gamma \frac{(u(x)-u(y))^2}{\vert x-y\vert ^{2}}\dx\mathrm{d}y\right)^{1/2},\]
    or to the norm obtained by real interpolation $(L^2(\Gamma),H^1(\Gamma))_{1/2,2}$. Adopting the definition \eqref{E3.1} implies that the norm of the trace operator $\tr\in \mL(H^1(\Omega),H^{1/2}(\Gamma))$ is $M_{\tr}=1$.
\end{remark}

\section{About the equation}\label{S2}
On $A$, $b$ and $a_0$ we make the following assumptions.

\begin{assumption}\label{A2.1} 
The coefficient functions $A:\bar\Omega\to \mathbb{R}^{2\times 2}$, $b:\bar\Omega\to \mathbb{R}^2$ and $a_0:\bar\Omega\to\mathbb{R}$ are infinitely differentiable on $\bar\Omega\setminus S$ as in \cite[Sect.~6.2.2]{KozlovMazyaRossmann1997}.
The function $A$ satisfies $A=A^T$ and the ellipticity condition
\begin{equation}\label{E2.1}
\exists \Lambda > 0 \text{ such that } \xi\cdot A \xi\geq \Lambda\vert \xi \vert^2\ \ \forall \xi \in \mathbb{R}^2 \text{ and for a.a. } x \in \Omega.
\end{equation}
For the function $a_0$ it is assumed that $a_0(x)\geq 0$ for a.a.\ $x\in\Omega$.
\end{assumption}

\begin{remark}\label{R2.2}
These smoothness assumptions on the coefficients are quite strong. In \cite[Section 3]{AMR2024}, we derived regularity results under the weaker assumptions $A\in C^{0,1}(\bar\Omega)^{2\times 2}$, $b\in L^{\ps}(\Omega)$ with $\ps > 2$ as well as $a_0, \nabla\cdot b\in L^2_{\vec\beta}(\Omega)$ and $b\cdot n\in W^{1/2,2}_{\vec\beta}(\Gamma)$ for some appropriate $\vec\beta$. For a definition of these spaces the reader is referred to Section \ref{S4}.
We conjecture that the results in the current paper can be derived under similarly weak assumptions. In order to avoid further technicalities, we will cite regularity results for regular coefficients from \cite{KozlovMazyaRossmann1997}. 
\end{remark}

\medskip
For every $y\in H^1(\Omega)$, we define $\mA y \in \dualH$ by
\begin{equation}
\langle \mA y,z\rangle_\Omega = \int_\Omega (A \nabla y) \cdot \nabla z \dx + \int_\Omega ( b\cdot\nabla y )z\dx + \int_\Omega a_0 y z\dx\ \forall z\in H^1(\Omega).
\label{E2.6}
\end{equation}
We emphasize that we are not imposing any condition to obtain coercivity in $H^1_0(\Omega)\times H^1_0(\Omega)$ of the associated bilinear form $\mathfrak{a}:H^1(\Omega)\times H^1(\Omega)\to\mathbb{R}$, given by 
\begin{align}\label{eq:def:frak:a}\mathfrak{a}(y,z) := \langle \mA y, z\rangle_\Omega.\end{align}

We denote $\mA_0$ the restriction of $\mA$ to $H^1_0(\Omega)$. Under Assumption \ref{A2.1},  the linear operator $\mA_{0}:H^1_0(\Omega) \longrightarrow H^{-1}(\Omega)$ is an isomorphism; this is \cite[Theorem 2.2]{CMR2020}. Also, $\mA:H^1(\Omega) \longrightarrow \dualH$ is linear continuous; see \cite[Lemma 2.3]{AMR2024}. We will denote $C_{\mA} = \Vert \mA_0^{-1}\Vert_{\mL(H^{-1}(\Omega),H^1_0(\Omega))}$ and $M_{\mA} = \Vert \mA\Vert_{\mL(H^1(\Omega),\dualH)}$. 
We remark that we do not have an isomorphism as an operator in $\mL(H^1(\Omega),\dualH)$ because we are not assuming $a_0>0$ in a set of positive measure.

For every $z\in H^1(\Omega)$ we define $\eta_z\in H^1_0(\Omega)$ as the unique solution of
\begin{equation}\label{eq::defetaRu}
\mathfrak{a}(\eta_{z},\zeta) = \langle -\mA z , \zeta \rangle_\Omega\text{ for all }\zeta\in H^1_0(\Omega).
\end{equation}
We can write $\eta_z = -\mA_{0}^{-1}\mA z$. Therefore, existence and uniqueness of $\eta_z$ is a consequence of \cite[Theorem 2.2]{CMR2020} and \cite[Lemma 2.3]{AMR2024}.
Also, due to the definition of $\mathfrak{a}(\cdot,\cdot)$, equation \eqref{eq::defetaRu} can be written as
\begin{equation}\label{eq::defetaRu2}
\mathfrak{a}(\eta_{z},\zeta) = -\mathfrak{a}(z,\zeta)\text{ for all }\zeta\in H^1_0(\Omega).
\end{equation}
We will say that $y_u\in H^1(\Omega)$ is a weak solution of \eqref{E1.1}  if
\[\mathfrak{a}(y_u,\zeta) = 0\ \forall\zeta\in H^1_0(\Omega),\qquad y_u = u\text{ on }\Gamma.\]

\begin{theorem}\label{T2.2}
For every $u\in H^{1/2}(\Gamma)$ there exists a unique $y_u\in H^1(\Omega)$ weak solution of \eqref{E1.1}. For every extension operator $\mE$, we can write $y_u = \eta_{\mE u} + \mE u$, where $\eta_{\mE u}\in H^1_0(\Omega)$ is the unique solution of \eqref{eq::defetaRu} for $z=\mE u$. There exists $M_{\mS}>0$ independent of $\mE$ such that
\begin{equation}
\label{E::boundyu}
\Vert y_{u}\Vert_{H^1(\Omega)} \leq M_{\mS}\Vert u\Vert_{H^{1/2}(\Gamma)}.
\end{equation}
The continuous linear operator $\mS:H^{1/2}(\Gamma)\to H^1(\Omega)$ given by $\mS u=y_u$ is an extension operator of class $C^2$. For any $v\in H^{1/2}(\Gamma)$, $\mS'(u)v = y_v$ and $\mS''(u) v^2 = 0$.
\end{theorem}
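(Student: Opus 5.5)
Fix an extension operator $\mE$ (for instance $\mH$) and reduce the Dirichlet problem to a homogeneous one. Given $u\in H^{1/2}(\Gamma)$, set $y:=\eta_{\mE u}+\mE u$, where $\eta_{\mE u}=-\mA_0^{-1}\mA\mE u\in H^1_0(\Omega)$ is well defined because $\mA_0$ is an isomorphism \cite[Theorem 2.2]{CMR2020} and $\mA\in\mL(H^1(\Omega),\dualH)$ \cite[Lemma 2.3]{AMR2024}.

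\textbf{Existence.} The trace of $y$ is $u$, since $\eta_{\mE u}$ has zero trace. By \eqref{eq::defetaRu2}, $\mathfrak{a}(y,\zeta)=\mathfrak{a}(\eta_{\mE u},\zeta)+\mathfrak{a}(\mE u,\zeta)=0$ for all $\zeta\in H^1_0(\Omega)$. So $y$ is a weak solution of \eqref{E1.1}.

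\textbf{Uniqueness.} If $y_1,y_2$ are two weak solutions, then $w=y_1-y_2\in H^1_0(\Omega)$ and $\mA_0 w=0$ in $H^{-1}(\Omega)$, so $w=0$ by injectivity of $\mA_0$. Consequently $y_u$ does not depend on $\mE$, and the representation $y_u=\eta_{\mE u}+\mE u$ holds for every extension operator $\mE$.

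\textbf{The bound \eqref{E::boundyu}.} The one point needing care is that $M_{\mS}$ must be independent of $\mE$. Since $y_u$ itself is independent of $\mE$, it suffices to bound it using one fixed extension, say $\mH$. The restriction $\mA|_{H^1_0}\colon H^1_0(\Omega)\to\dualH\to H^{-1}(\Omega)$ does not increase norms, so
\[
\Vert y_u\Vert_{H^1(\Omega)}\le \Vert \mA_0^{-1}\mA\mH u\Vert_{H^1(\Omega)}+\Vert\mH u\Vert_{H^1(\Omega)}\le (C_{\mA}M_{\mA}+1)M_{\mH}\Vert u\Vert_{H^{1/2}(\Gamma)}.
\]
Here $M_{\mH}$ is finite by the norm equivalence in Remark \ref{re::R1.2}. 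Alternatively, one can take the infimum over all extensions $z$ of $u$ in \eqref{E3.1}, which gives $M_{\mS}=C_{\mA}M_{\mA}+1$. Both choices are independent of $\mE$.

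\textbf{Remaining properties.} $\mS$ is linear: by uniqueness, $y_{\alpha u+v}=\alpha y_u+y_v$. It is bounded by \eqref{E::boundyu}, and $\tr\,\mS u=u$, so $\mS$ is an extension operator. A bounded linear operator is $C^\infty$ (in particular $C^2$), with $\mS'(u)v=\mS v=y_v$ and $\mS''(u)\equiv0$. No step presents a real obstacle; the only subtlety is the $\mE$-independence of the constant, handled through uniqueness as above.
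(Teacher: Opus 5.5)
Your proposal is correct and follows the paper's proof essentially step by step. You lift the data with $\eta_{\mE u}=-\mA_0^{-1}\mA\mE u$, verify the weak formulation, get uniqueness from injectivity of $\mA_0$, and fix $\mE=\mH$ to obtain the $\mE$-independent constant $M_{\mS}=M_{\mH}(1+C_{\mA}M_{\mA})$. Your additional variant is also valid and is not stated in the paper: by uniqueness, $y_u=\eta_z+z$ for every $H^1$ lifting $z$ of $u$, so taking the infimum in \eqref{E3.1} gives the slightly sharper constant $1+C_{\mA}M_{\mA}$.
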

\begin{proof}
 By definition of $\mA$, we have that $-\mA\mE u\in \dualH\hookrightarrow H^{-1}(\Omega)$. By \cite[Theorem 2.2]{CMR2020}, there exists a unique $\eta_{_{\mE u}}\in H^1_0(\Omega)$ such that $\mathfrak{a}(\eta_{_{\mE u}},\zeta) = \langle -\mA\mE u , \zeta \rangle_\Omega$ for all $\zeta\in H^1_0(\Omega)$. Furthermore
   \begin{align}
   \Vert \eta_{_{\mE u}}\Vert_{H^1_0(\Omega)}&\leq C_{\mA} \Vert \mA\mE u\Vert_{H^{-1}(\Omega)} \notag \\
   &\leq C_{\mA} \Vert \mA\mE u\Vert_{\dualH} \leq C_{\mA} M_{\mA} \Vert \mE u\Vert_{H^1(\Omega)}  \leq C_{\mA} M_{\mA} M_{\mE} \Vert u\Vert_{H^{1/2}(\Gamma)}
   \label{E::bound}
   \end{align}
 Take $y_u = \eta_{_{\mE u}} + \mE u$. Then the trace of $y_u$ is $u$ and $\mathfrak{a}(y_u,\zeta) = \mathfrak{a}(\eta_{_{\mE u}},\zeta)+ \mathfrak{a}(\mE u,\zeta) = \langle -\mA\mE u , \zeta \rangle_\Omega + \langle \mA\mE u , \zeta \rangle_\Omega =0$ for all $\zeta\in H^1_0(\Omega)$, and hence $y_u$ is a weak solution of \eqref{E1.1}. The estimate \eqref{E::boundyu} follows from \eqref{E::bound} taking $\mE=\mH$ and $M_{\mS}=M_{\mH}(1+ C_{\mathcal{A}} M_{\mathcal{A}} )$. 
  
  Let us show uniqueness. Assume that $z_u$ is another weak solution. Then $\eta = y_u-z_u\in H^1_0(\Omega)$ and $\mathfrak{a}(\eta,\zeta)=0$ for all $\zeta\in H^1_0(\Omega)$. Using again \cite[Theorem 2.2]{CMR2020} we have that $\eta=0$ and hence $z_u=y_u$.
  
  The last statements are straightforward consequences of the definition of $\mS$.
\end{proof}
\section{Analysis of the control problem}\label{S3}

 Following \cite[p. 726]{OfPhanSteinbach2015}, we will use the seminorm in $H^{1/2}(\Gamma)$ defined by
\begin{equation}\label{eq::defnormh12}\vert u\vert_{H^{1/2}(\Gamma)} : = \Vert  \nabla\mH u\Vert_ {L^2(\Omega)}.\end{equation}
Notice that both this seminorm and the Sobolev-Slobodetskii seminorm are norms in the quotient space $H^{1/2}(\Gamma)/\mathbb{R}$.
\begin{lemma}\label{le::Lemma3.1}
  The seminorms in $H^{1/2}(\Gamma)$ given by 
  \[\vert u \vert_{a} = \Vert  \nabla\mH u\Vert_ {L^2(\Omega)}\text{ and }\vert u \vert_{b} = \left(\int_\Gamma\int_\Gamma \frac{(u(x)-u(y))^2}{\vert x-y\vert ^{2}}\dx\mathrm{d}y\right)^{1/2}
  \]
  are equivalent norms in $H^{1/2}(\Gamma)/\mathbb{R}$ and equivalent seminorms in $H^{1/2}(\Gamma)$.
\end{lemma}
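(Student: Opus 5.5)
The plan is to first show that both quantities vanish exactly on the constants. We then pass to the full norms, use Remark~\ref{re::R1.2}, and close with a Poincar\'e-type argument on the quotient space.

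\textbf{Kernels.} Since $\Omega$ is connected (a domain), $\vert u\vert_a=0$ means $\nabla\mH u=0$. Hence $\mH u$ is a constant $c$ and $u=c$ on $\Gamma$. Conversely, for $u=c$ we have $\mH u=c$ by uniqueness in \eqref{eq::Hu}. Because $\Gamma$ is the connected boundary of a polygon, $\vert u\vert_b=0$ means $u(x)=u(y)$ for a.a.\ $(x,y)$, so again $u$ is constant. Both seminorms are invariant under adding constants: for $\vert\cdot\vert_a$ this holds because $\mH(u+c)=\mH u+c$. Therefore both are well defined on $H^{1/2}(\Gamma)/\mathbb{R}$, and they are norms there.

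\textbf{Equivalence.} By Remark~\ref{re::R1.2}, the two full norms $N_a(u)=(\Vert u\Vert^2_{L^2(\Gamma)}+\vert u\vert_a^2)^{1/2}$ and $N_b(u)=(\Vert u\Vert^2_{L^2(\Gamma)}+\vert u\vert_b^2)^{1/2}$ are equivalent norms on $H^{1/2}(\Gamma)$. Given $u$, let $\bar u=\vert\Gamma\vert^{-1}\int_\Gamma u$ and $w=u-\bar u$, so that $\vert w\vert_\ast=\vert u\vert_\ast$ for $\ast\in\{a,b\}$.

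The key step is the Poincar\'e-type inequality
\[\Vert w\Vert_{L^2(\Gamma)}\le C\vert w\vert_\ast\quad\text{for all }w\text{ with }\int_\Gamma w=0 .\]
We prove it by the standard compactness argument. Suppose there are $w_n$ with zero mean, $\Vert w_n\Vert_{L^2(\Gamma)}=1$ and $\vert w_n\vert_\ast\to0$. Then $N_\ast(w_n)$ is bounded. Since the embedding $H^{1/2}(\Gamma)\hookrightarrow L^2(\Gamma)$ is compact, a subsequence converges weakly in $H^{1/2}(\Gamma)$ and strongly in $L^2(\Gamma)$ to some $w$. The limit satisfies $\Vert w\Vert_{L^2}=1$ and $\int_\Gamma w=0$. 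Both seminorms are weakly lower semicontinuous, being continuous and convex; for $\vert\cdot\vert_a$ this uses that $\mH$ is linear and bounded. Hence $\vert w\vert_\ast=0$, so $w$ is constant, and zero mean forces $w=0$, a contradiction.

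From this inequality we get $N_a(w)\sim\vert u\vert_a$ and $N_b(w)\sim\vert u\vert_b$. Then
\[\vert u\vert_a\le N_a(w)\le C N_b(w)\le C'\vert u\vert_b,\]
and symmetrically $\vert u\vert_b\le C''\vert u\vert_a$. This gives equivalence both as seminorms on $H^{1/2}(\Gamma)$ and as norms on the quotient.

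\textbf{Main obstacle.} The only delicate point is the Poincar\'e-type inequality, which rests on two facts: the compactness of $H^{1/2}(\Gamma)\hookrightarrow L^2(\Gamma)$, and the connectedness of $\Gamma$ and $\Omega$. We take the compactness from the equivalent interpolation characterization $(L^2(\Gamma),H^1(\Gamma))_{1/2,2}$ in Remark~\ref{re::R1.2}, applied side by side on the segments $\Gamma_j$. Alternatively, it follows by composing the bounded map $\mH$ with the compact trace $H^1(\Omega)\to L^2(\Gamma)$. Connectedness comes from $\Omega$ being a polygonal domain. If $\Omega$ were multiply connected, $\Gamma$ would have several components and the kernel of $\vert\cdot\vert_b$ restricted to one component would still be the constants, since the double integral couples all points. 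The kernel of $\vert\cdot\vert_a$ is also the global constants. So the argument is unaffected.
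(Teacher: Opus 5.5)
Your proof is correct and follows essentially the paper's route. Both arguments combine the equivalence of the full norms from Remark~\ref{re::R1.2} with a Poincar\'e/Deny--Lions inequality obtained by compactness; the paper's Lemma~\ref{LB1} is exactly your case $\ast=b$, which it proves with a Cauchy-sequence argument instead of weak lower semicontinuity. The only real difference is in the direction $\vert u\vert_b\leq C\vert u\vert_a$: the paper takes the infimum over constants of $\Vert \mH u-c\Vert_{H^1(\Omega)}$ and applies the Deny--Lions lemma in $H^1(\Omega)$, whereas you subtract the boundary mean, work with the $L^2(\Gamma)$-based norm $N_a$, and rerun the same compactness argument for $\vert\cdot\vert_a$.
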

\begin{proof}
  Consider the equivalent norms in $H^{1/2}(\Gamma)$ given by 
\[\Vert u \Vert_{a}^2 = \Vert \mH u\Vert_{H^1(\Omega)}^2=\Vert\mH u\Vert_{L^2(\Omega)}^2 + \vert   u\vert_{a}^2\text{ and }\Vert u \Vert_{b}^2 = \Vert u\Vert_{L^2(\Gamma)}^2 + \vert u \vert_{b}^2.
  \]  
By the equivalence of these norms, we know that there exists $M>0$ such that for every $c\in\mathbb{R}$ we have that
\begin{align*}
 \vert u \vert_{b}  & = \vert u - c \vert_{b} \leq \Vert u - c \Vert_{b} \leq M \Vert u - c\Vert_{a}   =  M \Vert \mH u - c\Vert_{H^1(\Omega)}.
\end{align*}  
Taking the infimum among all $c\in \mathbb{R}$ and applying the Deny-Lions Lemma, we have that
\begin{align*}
 \vert u \vert_{b} \leq M\inf_{c\in\mathbb{R}} \Vert \mH u - c\Vert_{H^1(\Omega)} \leq M' \Vert \nabla \mH u \Vert_{L^2(\Omega)} = M' \vert u \vert_{a}.
\end{align*} 
The other inequality follows in the same way. Since we have not found a proof of the Deny-Lions lemma in $H^{1/2}(\Gamma)$, we provide an elementary proof in Lemma \ref{LB1}.
\end{proof}

\medskip
We can write
\[\functional(u) = \frac{1}{2}\Vert y_u-y_d\Vert_ {L^2(\Omega)}^2+\frac{\tikhonov}{2}\Vert \nabla \mH u-\nabla\mH u_d\Vert_{L^2(\Omega)}^2.\]
Since $\mH u\in H^1_\Delta(\Omega)=\{y\in H^1(\Omega):\ \Delta y\in L^2(\Omega)\}$, for every $u\in H^{1/2}(\Gamma)$ we can define a variational normal derivative $\partial_n \mH u\in H^{-1/2}(\Gamma)$ via
\[
\langle \partial_n \mH u, v\rangle_\Gamma = 
\int_\Omega \nabla\mH u\cdot\nabla \mE v\dx,\ \forall v\in H^{1/2}(\Gamma),
\]
where $\mE$ is an extension operator. The definition is independent of $\mE$: if we consider $\mE_1$ and $\mE_2$ two extension operators, then $(\mE_1-\mE_2)v\in H^1_0(\Omega)$ and hence $(\nabla\mH u,\nabla (\mE_1-\mE_2)v)_\Omega =0$. For the sake of notation, we  define the Dirichlet-to-Neumann or Steklov-Poincar\'{e} operator $\mD:H^{1/2}(\Gamma)\to H^{-1/2}(\Gamma)$ by $\mD u=\partial_n\mH u$. Notice that  $\vert u\vert^2_{H^{1/2}(\Gamma)} = \langle \mD u,u\rangle_\Gamma$ and $\Vert \mD u\Vert_{H^{-1/2}(\Gamma)}\leq M_{\mH}^2\Vert u\Vert_{H^{1/2}(\Gamma)}$ for all $u\in H^{1/2}(\Gamma)$. We will denote $M_{\mD} = M_{\mH}^2$. We can write
\[\functional(u) = \frac{1}{2}\Vert y_u-y_d\Vert_ {L^2(\Omega)}^2+\frac{\tikhonov}{2}\langle \mD (u-u_d),u-u_d\rangle_\Gamma.
\]
For the record, we notice (see \cite[Lemma 2.5]{AMR2024}) that for every $z\in H^1(\Omega)$, $\mathcal{A}^\star z\in \dualH$ can be computed as
\[\langle \mathcal{A}^\star z ,y \rangle_\Omega = \int_\Omega (A^T \nabla z) \cdot \nabla y \dx  - \int_\Omega y\nabla\cdot ( b z)\dx +
\int_\Gamma y z b\cdot n\,\dx
+ \int_\Omega a_0 y z\dx\ \forall y\in H^1(\Omega).
\]
Notice that $\mA^\star_0$ is the restriction to $H^1_0(\Omega)$ of $\mA^\star$, and thus there is no ambiguity.
Since $\mA_{0}$ is an isomorphism between $H^1_0(\Omega)$ and $H^{-1}(\Omega)$, so is $\mA_{0}^\star$. For $g\in H^{-1}(\Omega)$, we define $\phi_g = (\mA^\star_{0})^{-1}g\in H^1_0(\Omega)$. It is the unique solution of 
\begin{equation}\label{eq::Sstar}
\mathfrak{a}(\zeta,\phi_g) = \langle g, \zeta\rangle_\Omega\ \forall \zeta\in H^1_0(\Omega).
\end{equation}
For all $g\in \dualH$, all $v\in H^{1/2}(\Gamma)$ and every extension operator $\mE$, we have that $\mS^\star:\dualH\to H^{-1/2}(\Gamma)$ satisfies that 
\begin{align}
  \langle \mS^\star g, v \rangle_\Gamma &= 
  \langle  g, \mS v \rangle_\Omega 
  =
  \langle  g, \eta_{\mE v} \rangle_\Omega +
  \langle  g, \mE v \rangle_\Omega \notag\\
  &= \mathfrak{a}(\eta_{\mE v},\phi_g)+
  \langle  g, \mE v \rangle_\Omega 
   =  
   - \mathfrak{a}(\mE v,\phi_g) + \langle g, \mE v\rangle_\Omega,
   \label{MME3.3}
\end{align}
where, in the second step, we have used that $\dualH\subset H^{-1}(\Omega)$ and \eqref{eq::Sstar}.
For $g$ regular enough, we show in Lemma \ref{le::L4.1} below that $\partial_{n_A}\phi_g$ is well defined. In this case we can identify $\mS^\star g = -\partial_{n_A}\phi_g$.
For every $u\in H^{1/2}(\Gamma)$, we define the adjoint state $\varphi_u= \phi_{y_u-y_d}\in H^1_0(\Omega)$, the unique solution of 
\[\mathfrak{a}(\zeta,\varphi_u) = (y_u-y_d,\zeta)\ \forall \zeta\in H^1_0(\Omega).\]
For every extension operator $\mE$ we can write
\begin{align}
  (y_u-y_d,y_v ) &= (y_u-y_d,\eta_{\mE v}) + (y_u-y_d,\mE v) =
  \mathfrak{a}(\eta_{\mE v},\varphi_u)+ (y_u-y_d,\mE v)\notag \\
   &= 
  -\mathfrak{a}(\mE v,\varphi_u)+ (y_u-y_d,\mE v). \label{MME3.4}
\end{align}
In order to condense the notation in the next lemma, we will denote 
\begin{align}\label{def:Tw}
    \mT = \mS^\star \mS  + \tikhonov \mD\in \mL(H^{1/2}(\Gamma),H^{-1/2}(\Gamma))\text{ and }w = \mS^\star y_d + \tikhonov \mD u_d\in H^{-1/2}(\Gamma).
\end{align}
Since $M_{\mS^\star} = M_{\mS}$, we have that $M_{\mT} \leq M_{\mS}^2+\tikhonov M_{\mD}$ and
\begin{equation}\label{eq::estw}
    \Vert w\Vert_{H^{-1/2}(\Gamma)} \leq M_{\mS}\Vert y_d\Vert_{L^2(\Omega)} + \tikhonov M_{\mD} \Vert u_d\Vert_{H^{1/2}(\Gamma)}
\end{equation}

\begin{lemma}\label{L3.2}The functional $\functional$ is of class $C^2$ in $H^{1/2}(\Gamma)$. For every $u,v\in H^{1/2}(\Gamma)$ and every extension operator $\mE$, we can write
\begin{align}
  \functional'(u)v  &= (y_u-y_d, y_v) + \tikhonov (\nabla \mH u-\nabla\mH u_d,\nabla \mE v) \notag\\
  &= -\mathfrak{a}(\mE v,\varphi_u)+ (y_u-y_d,\mE v) + \tikhonov (\nabla \mH u-\nabla\mH u_d,\nabla \mE v) \notag\\&= \langle \mT u,v\rangle_\Gamma - \langle  w,v\rangle_\Gamma,\label{OC}\\
  \functional''(u)v^2 &= \Vert y_v\Vert_{L^2(\Omega)}^2 + \tikhonov\Vert \nabla \mH v\Vert_{L^2(\Omega)}^2 = \langle \mT v,v\rangle_\Gamma.\notag
\end{align}
\end{lemma}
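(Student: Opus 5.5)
The functional $\functional$ is a continuous quadratic form in $u$ plus continuous affine and constant terms, so the plan is to expand $\functional(u+tv)$ exactly and read off the derivatives.

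By Theorem \ref{T2.2}, $\mS$ is linear and continuous from $H^{1/2}(\Gamma)$ to $H^1(\Omega)\hookrightarrow L^2(\Omega)$, so $y_{u+tv}=y_u+t y_v$. The operator $\mH$ is also linear and continuous, so $\nabla\mH(u+tv-u_d)=\nabla\mH(u-u_d)+t\nabla\mH v$. Expanding both squares gives
\[\functional(u+tv)=\functional(u)+t\big[(y_u-y_d,y_v)+\tikhonov(\nabla\mH u-\nabla\mH u_d,\nabla\mH v)\big]+\tfrac{t^2}{2}\big[\Vert y_v\Vert_{L^2(\Omega)}^2+\tikhonov\Vert\nabla\mH v\Vert_{L^2(\Omega)}^2\big].\]
The linear term is a bounded linear functional of $v$ that depends affinely and continuously on $u$. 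The quadratic term is a bounded quadratic form independent of $u$. Hence $\functional$ is $C^\infty$, in particular $C^2$, with the stated $\functional'(u)v$ (with $\mE=\mH$) and $\functional''(u)v^2$.

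Next I replace $\mH v$ by an arbitrary extension $\mE v$ in the Tikhonov term. Since $(\mE-\mH)v\in H^1_0(\Omega)$ and $\mH u-\mH u_d=\mH(u-u_d)$ satisfies \eqref{eq::Hu}, we get $(\nabla\mH(u-u_d),\nabla(\mE-\mH)v)_\Omega=0$. This yields the first line of \eqref{OC}. Substituting \eqref{MME3.4} for $(y_u-y_d,y_v)$ then gives the second line.

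For the operator form I use adjoints. First, $(y_u-y_d,y_v)=\langle \mS^\star(\mS u-y_d),v\rangle_\Gamma=\langle \mS^\star\mS u,v\rangle_\Gamma-\langle\mS^\star y_d,v\rangle_\Gamma$, where $y_d\in L^2(\Omega)\subset\dualH$. Second, by the definition of the variational normal derivative, $(\nabla\mH(u-u_d),\nabla\mE v)=\langle\mD(u-u_d),v\rangle_\Gamma$. Combining these with \eqref{def:Tw} gives $\langle\mT u,v\rangle_\Gamma-\langle w,v\rangle_\Gamma$. For the second derivative, the same identities give $\Vert y_v\Vert^2=\langle\mS^\star\mS v,v\rangle_\Gamma$ and $\Vert\nabla\mH v\Vert^2=\langle\mD v,v\rangle_\Gamma$.

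The argument is essentially routine. The only step that needs care is the independence of the choice of $\mE$, which relies on the orthogonality \eqref{eq::Hu} and on \eqref{MME3.4}.
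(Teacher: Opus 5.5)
Your proposal is correct and follows essentially the paper's route. You differentiate the quadratic functional directly, use \eqref{MME3.4} for the second line, and use the adjoint identity (the first equality in \eqref{MME3.3}) together with the definitions of $\mD$, $\mT$ and $w$ for the operator form. Beyond the paper's one-line proof, you spell out why $\nabla\mH v$ may be replaced by $\nabla\mE v$ in the Tikhonov term (harmonicity of $\mH(u-u_d)$ against $(\mE-\mH)v\in H^1_0(\Omega)$), which the paper leaves implicit.
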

\begin{proof}
  The first expression for the derivative of $\functional$ follows from straightforward derivation and the chain rule. The second one from \eqref{MME3.4}. The last one, from \eqref{MME3.3} and the definition of $\mT$ and $w$. The expressions for the second derivative follow from the first and last formulas for $\functional'(u)$.
\end{proof}

\begin{lemma}\label{L3.3}There exists $\nu>0$ such that $\functional''(u)v^2\geq \nu \Vert v\Vert_{H^{1/2}(\Gamma)}^2$ for every $v\in H^{1/2}(\Gamma)$.
\end{lemma}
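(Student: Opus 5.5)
The plan is a direct argument that splits $v$ into a constant part and a part with zero mean. By Lemma \ref{L3.2},
\[
\functional''(u)v^2=\Vert y_v\Vert_{L^2(\Omega)}^2+\tikhonov\vert v\vert_{H^{1/2}(\Gamma)}^2 .
\]
This is independent of $u$, and the Tikhonov term alone controls $v$ only modulo constants. So I will show that the state term $\Vert y_v\Vert_{L^2(\Omega)}$ controls the constant part of $v$.

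For $v\in H^{1/2}(\Gamma)$, let $c=c_v=\frac{1}{\vert\Gamma\vert}\int_\Gamma v$ be the mean value and $v_0=v-c$. By Lemma \ref{le::Lemma3.1} together with the Deny--Lions type result of Lemma \ref{LB1}, there is $C_1>0$ with
\[
\Vert v_0\Vert_{H^{1/2}(\Gamma)}\le C_1\vert v_0\vert_{H^{1/2}(\Gamma)}=C_1\vert v\vert_{H^{1/2}(\Gamma)} .
\]
If that lemma is stated with an infimum over constants rather than the mean, the same holds after a standard Poincaré-type adjustment, since on the quotient space the mean-value representative has norm comparable to the quotient norm.

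The key step treats the constant part through the state. Linearity of $\mS$ (Theorem \ref{T2.2}) gives $y_v=c\,y_1+y_{v_0}$, where $y_1=\mS 1$. Since $\mS$ is an extension operator, $y_1$ has trace $1\neq0$, so $y_1\neq0$ and $\gamma:=\Vert y_1\Vert_{L^2(\Omega)}>0$. Using \eqref{E::boundyu} and the embedding $H^1(\Omega)\hookrightarrow L^2(\Omega)$,
\[
\vert c\vert\,\gamma\le \Vert y_v\Vert_{L^2(\Omega)}+\Vert y_{v_0}\Vert_{L^2(\Omega)}\le \Vert y_v\Vert_{L^2(\Omega)}+M_{\mS}C_1\vert v\vert_{H^{1/2}(\Gamma)} .
\]
The nonvanishing of $y_1$ is exactly where well-posedness of the non-coercive state equation, via Theorem \ref{T2.2}, enters. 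No positivity of $a_0$ is needed.

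To conclude, I combine the two estimates:
\[
\Vert v\Vert_{H^{1/2}(\Gamma)}\le \vert c\vert\,\Vert 1\Vert_{H^{1/2}(\Gamma)}+C_1\vert v\vert_{H^{1/2}(\Gamma)}\le C_2\bigl(\Vert y_v\Vert_{L^2(\Omega)}+\vert v\vert_{H^{1/2}(\Gamma)}\bigr).
\]
Squaring and using $(a+b)^2\le 2a^2+2b^2$ yields $\Vert v\Vert_{H^{1/2}(\Gamma)}^2\le 2C_2^2\max\{1,\tikhonov^{-1}\}\,\functional''(u)v^2$. This is the claim with $\nu=\bigl(2C_2^2\max\{1,\tikhonov^{-1}\}\bigr)^{-1}$. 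The only delicate point is the constant $C_1$ from the Deny--Lions step, which the paper defers to Lemma \ref{LB1}; everything else is a routine triangle-inequality argument.
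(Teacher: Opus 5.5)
Your proof is correct, and it takes a genuinely different route from the paper.

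The paper argues by contradiction and compactness. It takes $v_n$ with $\Vert v_n\Vert_{H^{1/2}(\Gamma)}=1$ and $\functional''(u)v_n^2\le 1/n$, and passes to a weak limit $v$. It shows $\mH v_n\rightharpoonup \mH v$ and $\eta_{\mH v_n}\rightharpoonup\eta_{\mH v}$ in $H^1(\Omega)$, and uses Rellich--Kondrachov to get $y_v=0$, hence $v=0$ by the trace. Then $\Vert \mH v_n\Vert_{H^1(\Omega)}\to 0$, a contradiction.

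You instead split off a constant. The Tikhonov term controls the remainder through a Poincar\'e-type inequality, and the state term controls the constant through the single nonzero function $\mS 1$.

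The paper's route needs no Poincar\'e inequality on $\Gamma$, since compactness enters only through $H^1(\Omega)\hookrightarrow L^2(\Omega)$, but it gives no information on $\nu$. Yours is quantitative. Your $\nu$ depends only on $\tikhonov$, $M_{\mS}$, $\Vert \mS 1\Vert_{L^2(\Omega)}$, $\Vert 1\Vert_{H^{1/2}(\Gamma)}$ and a Poincar\'e constant. It also shows transparently why neither coercivity of $\mathfrak{a}$ nor positivity of $a_0$ matters.

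You do not even need Lemmas \ref{le::Lemma3.1} and \ref{LB1}. With the norm \eqref{E3.1}, $\Vert v-c\Vert_{H^{1/2}(\Gamma)}\le \Vert \mH v-c\Vert_{H^1(\Omega)}$. So the standard Poincar\'e--Wirtinger inequality in $H^1(\Omega)$, with $c$ the mean of $\mH v$ over $\Omega$, already gives $\Vert v-c\Vert_{H^{1/2}(\Gamma)}\le C\Vert\nabla\mH v\Vert_{L^2(\Omega)}$. Your argument works verbatim with any such constant.

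Your splitting also transfers almost unchanged to the discrete Theorem \ref{L6.1} for small $h$:
\begin{itemize}
\item constants lie in $U_h$;
\item the Dirichlet principle gives $\Vert\nabla\mH v_h\Vert_{L^2(\Omega)}\le\Vert\nabla\mH_h v_h\Vert_{L^2(\Omega)}$, because $\mQ_h v_h=v_h$;
\item $\mS_h$ is bounded uniformly in $h$;
\item $\Vert \mS_h 1\Vert_{L^2(\Omega)}\ge\frac12\Vert\mS 1\Vert_{L^2(\Omega)}$ by Lemma \ref{le::stateerror}.
\end{itemize}
This would avoid the convergence results of Lemmas \ref{co::conv::discharm} and \ref{MML5x}, which the paper's compactness argument needs there.
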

\begin{proof}
We have to prove the existence of $\nu>0$ such that
\[\Vert \eta_{_{\mH v}}+\mH v\Vert_{L^2(\Omega)}^2+\tikhonov \Vert \nabla\mH v\Vert_{L^2(\Omega)}^2\geq \nu \Vert v\Vert_{H^{1/2}(\Gamma)}^2 \forall v\in H^{1/2}(\Gamma).\]
Suppose that this is false. Then, for every $n\in\mathbb{N}$ there exists $v_n\in H^{1/2}(\Gamma)$ with $\Vert v_n\Vert_ {H^{1/2}(\Gamma)} =1$ such that
\begin{equation}\label{E::contr}
\Vert \eta_{_{\mH v_n}}+\mH v_n\Vert_{L^2(\Omega)}^2+\tikhonov \Vert \nabla\mH v_n\Vert_{L^2(\Omega)}^2\leq \frac{1}{n}.\end{equation}
Since $\{v_n\}_{n=1}^\infty$ is bounded in $H^{1/2}(\Gamma)$, 
there exists a subsequence, which will not be relabeled, and a function $v\in H^{1/2}(\Gamma)$ such that $v_n\rightharpoonup v$ weakly in $H^{1/2}(\Gamma)$. 

For every $g\in \dualH$, $\langle g,\mH v-\mH v_n\rangle_\Omega = \langle \mH^\star g ,v-v_n\rangle_\Gamma$ which tends to zero due to the weak convergence $v_n\rightharpoonup v$. Therefore $\mH v_n \rightharpoonup \mH v$ weakly in $H^1(\Omega)$.

For every $g\in H^{-1}(\Omega)$
\begin{align*}
\langle g,\eta_{_{\mH v_n}}-\eta_{_{\mH v}}\rangle_\Omega &=
\mathfrak{a}(\eta_{_{\mH v_n-\mH v}},\phi_g) \\
&= -\langle \mA (\mH v_n-\mH v), \phi_g\rangle_\Omega = 
-\langle  \mH v_n-\mH v, \mA^\star\phi_g\rangle_\Omega
\end{align*}
Notice that we can see $\mA^*\phi_g$ as an element in $\dualH$, and therefore $\langle g,\eta_{\mH v_n}-\eta_{\mH v}\rangle_\Omega$ tends to zero because $\mH v_n \rightharpoonup \mH v$ weakly in $H^1(\Omega)$. So we have that $\eta_{_{\mH v_n}}\rightharpoonup\eta_{_{\mH v}}$ weakly in $H^1(\Omega)$.

By the Rellich-Kondrachov theorem, $\mH v_n\to \mH v$ and $\eta_{_{\mH v_n}}\to \eta_{_{\mH v}}$ in $L^2(\Omega)$. 
Therefore $\eta_{_{\mH v_n}}+\mH v_n\to \eta_{_{\mH v}}+\mH v$ in $L^2(\Omega)$. 
From \eqref{E::contr}, $\Vert \eta_{_{\mH v_n}}+\mH v_n\Vert_ {L^2(\Omega)}\to 0$, and hence $\eta_{_{\mH v}}+\mH v=0$. 
But this is only possible if $v=0$, because $\eta_{_{\mH v}}\in H^1_0(\Omega)$. 
So $\Vert \mH v_n\Vert_{L^2(\Omega)}\to 0$.
 
 Using again \eqref{E::contr}, we have that $\Vert \nabla \mH v_n\Vert_{L^2(\Omega)}\to 0$. 
 So 
\[\Vert v_n\Vert_{H^{1/2}(\Gamma)} \leq M_{\tr}\Vert \mH v_n\Vert_ {H^1(\Omega)}\to 0.\]
 This contradicts the fact that $\Vert v_n\Vert_ {H^{1/2}(\Gamma)} =1$.
\end{proof}

\begin{theorem}
  Problem \Pb has a unique solution $\bar u\in H^{1/2}(\Gamma)$. Furthermore, there exists a unique $\bar y\in H^1(\Omega)$ and a unique $\bar\varphi\in H^1_0(\Omega)$ such that for every extension operator $\mE$,
\begin{subequations}\label{E3.4}
  \begin{align}
  \label{E3.4a}\mathfrak{a}(\bar y,\zeta) &= 0\ \forall\zeta\in H^1_0(\Omega),\qquad \bar y = \bar u\text{ on }\Gamma, \\
  \label{E3.4b}\mathfrak{a}(\zeta,\bar \varphi) &= (\bar y-y_d,\zeta)\ \forall \zeta\in H^1_0(\Omega),\\ 
  \label{E3.4c} -\mathfrak{a}(\mE v,\bar \varphi) + (\bar y-y_d,\mE v) &+ \tikhonov (\nabla \mH \bar u-\nabla\mH u_d,\nabla \mE v)  = 0\ \forall v\in H^{1/2}(\Gamma).
\end{align}
\end{subequations}
Moreover, we can write
\begin{equation}\label{E3.5}
\langle \mS^\star \mS \bar u + \tikhonov \mD \bar u,v\rangle_\Gamma = \langle  w,v\rangle_\Gamma\ \forall v\in H^{1/2}(\Gamma)
\end{equation}
with $w$ from \eqref{def:Tw}, and the following estimate holds:
\begin{equation}\label{eq::est.u.H12}
   \nu \Vert \bar u \Vert_{H^{1/2}(\Gamma)} \leq  M_{\mS}\Vert y_d\Vert_{L^2(\Omega)} + \tikhonov M_{\mD} \Vert u_d\Vert_{H^{1/2}(\Gamma)}.
\end{equation}
\end{theorem}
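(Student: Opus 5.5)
The plan is to treat \Pb as the minimization of a quadratic functional on the Hilbert space $H^{1/2}(\Gamma)$. Lemma~\ref{L3.2} shows that $\functional$ is of class $C^2$ with constant second derivative $\functional''(u)v^2=\langle \mT v,v\rangle_\Gamma$, and Lemma~\ref{L3.3} shows that this second derivative is coercive with constant $\nu$. Hence $\functional$ is strictly, in fact strongly, convex.

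\textbf{Existence and uniqueness.} Since $\functional$ is quadratic, Taylor's formula is exact:
\[
\functional(u)=\functional(0)+\functional'(0)u+\tfrac12\functional''(0)u^2\geq \functional(0)-\Vert w\Vert_{H^{-1/2}(\Gamma)}\Vert u\Vert_{H^{1/2}(\Gamma)}+\tfrac{\nu}{2}\Vert u\Vert^2_{H^{1/2}(\Gamma)}.
\]
Here we use that $\functional'(0)=-w$ by \eqref{OC}. So $\functional$ is coercive. Being convex and continuous, it is also weakly lower semicontinuous. The direct method therefore yields a minimizer $\bar u$, and strict convexity gives its uniqueness. Alternatively, one can apply Lax--Milgram to the bilinear form $\langle\mT u,v\rangle_\Gamma$, which is symmetric, bounded with $M_{\mT}\le M_{\mS}^2+\tikhonov M_{\mD}$, and coercive by Lemma~\ref{L3.3}. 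This produces the unique solution of \eqref{E3.5}, and convexity identifies it with the minimizer.

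\textbf{Optimality system.} Set $\bar y=y_{\bar u}$, which exists and is unique by Theorem~\ref{T2.2}; this gives \eqref{E3.4a}. Set $\bar\varphi=\varphi_{\bar u}=\phi_{\bar y-y_d}$, which exists and is unique because $\mA_0^\star$ is an isomorphism; this gives \eqref{E3.4b}. The first-order condition $\functional'(\bar u)=0$, written in the second form of \eqref{OC}, is exactly \eqref{E3.4c}, and it holds for every extension operator $\mE$. The third form of \eqref{OC} gives $\langle\mT\bar u-w,v\rangle_\Gamma=0$, which is \eqref{E3.5}.

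The uniqueness of $\bar y$ and $\bar\varphi$ as the pair satisfying the system is the only slightly delicate point. I would argue it as follows. Suppose $(\bar u,\bar y,\bar\varphi)$ satisfies \eqref{E3.4}. By Theorem~\ref{T2.2}, \eqref{E3.4a} forces $\bar y=y_{\bar u}$, and \eqref{E3.4b} forces $\bar\varphi=\varphi_{\bar u}$. Then \eqref{E3.4c} reads $\functional'(\bar u)=0$, and by strict convexity $\bar u$ is the unique minimizer. Hence the whole triple is determined.

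\textbf{Estimate \eqref{eq::est.u.H12}.} Test \eqref{E3.5} with $v=\bar u$ and use Lemma~\ref{L3.3}:
\[
\nu\Vert\bar u\Vert^2_{H^{1/2}(\Gamma)}\le \langle\mT\bar u,\bar u\rangle_\Gamma=\langle w,\bar u\rangle_\Gamma\le \Vert w\Vert_{H^{-1/2}(\Gamma)}\Vert\bar u\Vert_{H^{1/2}(\Gamma)}.
\]
Dividing by $\Vert\bar u\Vert_{H^{1/2}(\Gamma)}$ (the estimate is trivial when $\bar u=0$) and applying \eqref{eq::estw} gives the claim. I expect no real obstacle, since all the heavy lifting is done by Lemma~\ref{L3.3}.
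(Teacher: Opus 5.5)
Your proposal is correct and follows essentially the same argument as the paper: coercivity of $\functional''$ from Lemma~\ref{L3.3} gives existence and uniqueness by the direct method, the optimality system comes from $\functional'(\bar u)=0$ via the forms in \eqref{OC}, and the estimate follows by testing \eqref{E3.5} with $\bar u$ and using \eqref{eq::estw}. Your exact Taylor expansion with $\functional'(0)=-w$ spells out the coercivity of $\functional$, a step the paper leaves implicit; the Lax--Milgram alternative and your explicit uniqueness argument for the triple are likewise harmless elaborations.
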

\begin{proof}
Since $\functional$ is a quadratic function of $u$, Lemma \ref{L3.3} implies that $\functional$ is strictly convex and coercive.
Since $\functional$ is strictly convex, \Pb has at most one solution. On the other hand, $\functional$ is continuous and convex, and hence weakly lower semicontinuous, so existence of solution follows from the coercivity of $\functional$.

First order optimality conditions follow from the condition $\functional'(\bar u)v=0$ for all $v\in H^{1/2}(\Gamma)$. Uniqueness of $\bar\varphi$ follows from the unique solvability of the adjoint equation. Equation \eqref{E3.5} follows from \eqref{E3.4c} \eqref{OC}.

 Finally, from Lemma \ref{L3.3} and equality \eqref{E3.5}, we deduce that
    \begin{align*}
         \nu\Vert \bar u\Vert_{H^{1/2}(\Gamma)}^2 \leq \langle \mT \bar u,\bar u\rangle_\Gamma = \langle w,\bar u\rangle_\Gamma \leq \Vert w \Vert_{H^{-1/2}(\Gamma)} \Vert \bar u \Vert_{H^{1/2}(\Gamma)},
    \end{align*}
    and \eqref{eq::est.u.H12} follows from \eqref{eq::estw}.
\end{proof}
\begin{remark}\label{R3.2}
The proof of existence of solution of \Pb in the seminal paper \cite{OfPhanSteinbach2015} can be traced to the $H^{1/2}(\Gamma)$ ellipticity of the operator they call $T_\rho$, which is obtained in Lemma 2.1 of that reference using that the second order differential operator is $-\Delta$. In \cite{PalGudi2026}, the proof relies in the result stated in \cite[Lemma 2.3]{PalGudi2026}, which uses in an explicit way the coercivity of the second order differential operator. 
Our proof of coercivity of $\functional$, unlike that of  \cite[Lemma 2.3]{PalGudi2026}, cannot use that $\mathfrak{a}(\cdot,\cdot)$ is coercive in $H^1_0(\Omega)\times H^1_0(\Omega)$.
\end{remark}

\section{Regularity}\label{S4}
To deduce the regularity of the optimal control, we notice that, formally, we can write the first order optimality condition as
\begin{equation}\label{eq:normal:derivative:Hu2}
\tikhonov\partial_n \mH \bar u = \partial_{n_A}\bar\varphi + \tikhonov\partial_n \mH u_d.
\end{equation}
Therefore, the optimal control will inherit the regularity properties of the optimal adjoint state and the target control. We next introduce the definition of the appropriate weighted spaces, show that we can define the conormal derivative of the optimal adjoint state and the normal derivative of the target data and deduce its regularity properties.

For every $j\in\{1,\ldots,m\}$ we will denote by $\lambda_{A,j}$ (resp. $\lambda_{\Delta,j}$) the leading singular exponent
associated to the operator $-\nabla\cdot (A(S_j)\nabla y)$ (resp. $-\Delta y$) in the infinite cone $K_j=\{x\in\mathbb{R}^2:0<r_j,\,0<\theta_j<\omega_j\}$; see \cite[Section 3]{AMR2024}. We define $\lambda_j = \min\{\lambda_{A,j},\lambda_{\Delta,j}\}$ and note that $\lambda_j>\frac12$.

 Let $k\in\mathbb{N}_0$ and $\vec\beta=(\beta_1,\ldots,\beta_m)^T\in\mathbb{R}^m$, $j\in\{1,\ldots,m\}$. For ball-neighborhoods $\Omega_{R_j}$ of $\x_j$ with radius $R_j\leq 1$ and $\Omega^0:=\Omega\setminus\bigcup_{j=1}^m \Omega_{R_j/2}$ we define the spaces $W^{k,2}_{\beta_j}(\Omega_{R_j})$ and $V^{k,2}_{\beta_j}(\Omega_{R_j})$ via their norms
\begin{align*}
  \Vert z\Vert_{W^{k,2}_{\beta_j}(\Omega_{R_j})}^2 = \sum_{\mid\alpha\mid\le k}
  \Vert r_j^{\beta_j}D^\alpha z\Vert_{L^2(\Omega_{R_j})}^2, \quad
  \Vert z\Vert_{V^{k,2}_{\beta_j}(\Omega_{R_j})}^2 = \sum_{\mid\alpha\mid\le k}
  \Vert r_j^{\beta_j-k+\mid\alpha\mid}D^\alpha z\Vert_{L^2(\Omega_{R_j})}^2.
\end{align*}
\begin{remark}\label{rem:VkHk}
Note that $W^{0,2}_{\beta_j}(\Omega_{R_j})=V^{0,2}_{\beta_j}(\Omega_{R_j})=:L^2_{\beta_j}(\Omega_{R_j})$. 
Under the assumption $0 <\beta_j<1$, the equalities $W^{1,2}_{\beta_j}(\Omega_{R_j})=V^{1,2}_{\beta_j}(\Omega_{R_j})$, $W^{2,2}_{\beta_j}(\Omega_{R_j})=V^{2,2}_{\beta_j}(\Omega_{R_j})\oplus\mathcal{P}_0$, and $W^{3,2}_{\beta_j}(\Omega_{R_j})=V^{3,2}_{\beta_j}(\Omega_{R_j})\oplus\mathcal{P}_1$ hold \cite[Theorem 7.1.1]{KozlovMazyaRossmann1997}. The cited theorem states also norm equivalences, e.g.,
\[
\Vert z\Vert_{W^{2,2}_{\beta_j}(\Omega_{R_j})} \sim \Vert z-z(0)\Vert_{V^{2,2}_{\beta_j}(\Omega_{R_j})} +\vert z(0)\vert
\quad\text{for}\quad 0<\beta_j<1.
\]
 We also notice that the spaces $\{z\in W^{1,2}_{\vec\beta}(\Omega):\ z=0\text{ on }\Gamma\}$ and $\{z\in V^{1,2}_{\vec\beta}(\Omega):\ z=0\text{ on }\Gamma\}$ coincide and in this space the norms $\Vert z\Vert_{W^{1,2}_{\vec\beta}(\Omega)}$ and $\Vert z\Vert_{V^{1,2}_{\vec\beta}(\Omega)}$ are equivalent even if $\beta_j=0$ for some $j\in\{1,\ldots,m\}$; see \cite[Lemma 6.6.1]{KozlovMazyaRossmann1997}.
 \end{remark}
 
 Denoting by $\chi_j\in C^\infty(\Omega)$, $j\in\{1,\ldots,m\}$ nonnegative, monotonic, radial cut-off functions with $\chi_j=1$ in $\Omega_{R_j/2}$, $\chi_j=0$ in $\Omega\setminus\Omega_{R_j}$, 
and introducing $\chi_0:=1-\sum_{j=1}^m\chi_j$, the spaces $W^{k,2}_{\vec\beta}(\Omega)$ and $V^{k,2}_{\vec\beta}(\Omega)$ denote the set of all functions $z$ such that
\begin{align*}
  \Vert z\Vert_{W^{k,2}_{\vec\beta}(\Omega)} = \Vert \chi_0 z \Vert_{H^k(\Omega)} + \sum_{j=1}^m
  \Vert \chi_j z\Vert_{W^{k,2}_{\beta_j}(\Omega_{R_j})}, \\
  \Vert z\Vert_{V^{k,2}_{\vec\beta}(\Omega)} = \Vert \chi_0 z\Vert_{H^k(\Omega)} + \sum_{j=1}^m
  \Vert \chi_j z\Vert_{V^{k,2}_{\beta_j}(\Omega_{R_j})},
\end{align*}
respectively, are finite. The corresponding seminorms are defined by setting $\vert \alpha\vert =k$ instead of $\vert \alpha\vert \le k$. 
Remark \ref{rem:VkHk} leads for $k=1$ to the equality $V^{1,2}_{\vec\beta}(\Omega) = W^{1,2}_{\vec\beta}(\Omega)$ if $0<\beta_j<1$ and for $k=2$ to the direct sum
\begin{align}\label{eq:H2V2}
  W^{2,2}_{\vec\beta}(\Omega) = V^{2,2}_{\vec\beta}(\Omega) \oplus \chi_1\mathcal{P}_0\oplus\cdots\oplus\chi_m\mathcal{P}_0
  \quad\text{if}\quad 0<\beta_j<1 \quad \forall j\in\{1,\ldots,m\}
\end{align}
which we will use later. We will also use the notation $L^2_{\vec\beta}(\Omega)$ for $W^{0,2}_{\vec\beta}(\Omega)$.

The trace spaces $W^{k-1/2,2}_{\vec\beta}(\Gamma_j)$, $V^{k-1/2,2}_{\vec\beta}(\Gamma_j)$, $W^{k-1/2,2}_{\vec\beta}(\Gamma)$, and $V^{k-1/2,2}_{\vec\beta}(\Gamma)$ are defined in \cite[Subsections 7.1.3, 6.2.1, 7.1.4]{KozlovMazyaRossmann1997} in a way analogous to \eqref{E3.1}.
e.g., via 
\begin{equation}\label{eq::tracenorm}
    \Vert u\Vert_{W^{k-1/2,2}_{\vec\beta}(\Gamma)}=\inf\{\Vert z\Vert_{W^{k,2}_{\vec\beta}(\Omega)}:z\in W^{k,2}_{\vec\beta}(\Omega),z\vert _{\Gamma\setminus S}=u\}.
\end{equation}
For an equivalent norm, see \cite[Lemma 6.1.2]{KozlovMazyaRossmann1997}. 

\begin{lemma}\label{lem:4.1}
    If $u\in H^{1/2}(\Gamma)$ then $y_u-y_d\in L^2(\Omega)\hookrightarrow L^2_{\vec\beta}(\Omega)\hookrightarrow H^{-1}(\Omega)$ for $\vec\beta\in [\vec0,\vec1)$.
\end{lemma}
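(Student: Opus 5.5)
The statement consists of three parts, which I will treat separately: the membership $y_u-y_d\in L^2(\Omega)$, the embedding $L^2(\Omega)\hookrightarrow L^2_{\vec\beta}(\Omega)$, and the embedding $L^2_{\vec\beta}(\Omega)\hookrightarrow H^{-1}(\Omega)$, each for $\vec\beta\in[\vec0,\vec1)$.

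\emph{Membership.} By Theorem \ref{T2.2}, $y_u\in H^1(\Omega)\subset L^2(\Omega)$. Since $y_d\in L^2(\Omega)$ by assumption, the difference lies in $L^2(\Omega)$.

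\emph{First embedding.} I will use the cut-off partition $\chi_0+\sum_j\chi_j=1$. Since $0\le\chi_j\le1$, we have $\Vert\chi_0 z\Vert_{L^2}\le\Vert z\Vert_{L^2}$. On $\Omega_{R_j}$ we have $r_j\le R_j\le 1$ and $\beta_j\ge0$, so $r_j^{\beta_j}\le 1$. Hence $\Vert r_j^{\beta_j}\chi_j z\Vert_{L^2(\Omega_{R_j})}\le\Vert z\Vert_{L^2(\Omega)}$. Summing over $j$ gives $\Vert z\Vert_{L^2_{\vec\beta}(\Omega)}\le (m+1)\Vert z\Vert_{L^2(\Omega)}$.

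\emph{Second embedding.} This is the only step requiring an argument. For $f\in L^2_{\vec\beta}(\Omega)$ and $\zeta\in H^1_0(\Omega)$, I will bound $\int_\Omega f\zeta\dx$ by splitting $f=\sum_{j=0}^m\chi_j f$.

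The $\chi_0$ term is controlled by Cauchy--Schwarz and Poincaré.

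For $j\ge1$, I write
\[
\int \chi_j f\zeta\dx=\int r_j^{\beta_j}\chi_j f\; r_j^{-\beta_j}\zeta\dx
\le \Vert r_j^{\beta_j}\chi_j f\Vert_{L^2}\,\Vert r_j^{-\beta_j}\zeta\Vert_{L^2(\Omega_{R_j})}.
\]
Since $R_j\le1$ and $\beta_j<1$, we have $r_j^{-\beta_j}\le r_j^{-1}$ on $\Omega_{R_j}$. It therefore suffices to bound $\Vert r_j^{-1}\zeta\Vert_{L^2(\Omega_{R_j})}\le C\Vert\zeta\Vert_{H^1_0(\Omega)}$.

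This Hardy-type inequality for functions vanishing on $\Gamma$ near a corner is exactly the statement that $H^1_0$ functions lie in $V^{1,2}_{0}$ with equivalent norms, which is \cite[Lemma 6.6.1]{KozlovMazyaRossmann1997} as recalled in Remark \ref{rem:VkHk} with $\beta_j=0$. Alternatively, it follows from a one-dimensional Poincaré inequality in $\theta_j$ on $(0,\omega_j)$ for each fixed $r_j$, using $\zeta=0$ on both sides of the corner, followed by integration in $r_j$.

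Summing over $j$ yields $\Vert f\Vert_{H^{-1}(\Omega)}\le C\Vert f\Vert_{L^2_{\vec\beta}(\Omega)}$.

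The main obstacle is this Hardy-type step. It is resolved by citing the cited lemma, or by the angular Poincaré argument; the latter is valid for every $\omega_j\in(0,2\pi)$ because both boundary rays carry a zero trace.
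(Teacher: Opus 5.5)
Your proof is correct. Its skeleton matches the paper's: near each corner you write $f\zeta=(r_j^{\beta_j}f)(r_j^{-\beta_j}\zeta)$, apply Cauchy--Schwarz, and then use a Hardy-type bound for $\Vert r_j^{-\beta_j}\zeta\Vert_{L^2}$. The difference is which Hardy inequality each argument uses.

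You dominate $r_j^{-\beta_j}\le r_j^{-1}$ and then use the borderline estimate $\Vert r_j^{-1}\zeta\Vert_{L^2(\Omega_{R_j})}\le C\Vert\zeta\Vert_{H^1_0(\Omega)}$. This holds only because $\zeta$ vanishes on both edges at the corner, as in the $\beta_j=0$ case of Remark~\ref{rem:VkHk}. If you use the angular Poincar\'e inequality instead, you implicitly need $R_j$ small enough that each arc $\{r_j=\rho\}\cap\Omega$ with $\rho<R_j$ runs from $\Gamma_j$ to $\Gamma_{j-1}$.

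The paper instead distinguishes two cases. For $\beta_j=0$ it uses only the trivial bound $\Vert v\Vert_{L^2}\le c\Vert v\Vert_{H^1}$. For $\beta_j>0$ it reads $\Vert r_j^{-\beta_j}v\Vert_{L^2}$ off the $V^{1,2}_{1-\beta_j}$-norm, uses the identity $V^{1,2}_{1-\beta_j}=W^{1,2}_{1-\beta_j}$ (valid since $0<1-\beta_j<1$), and then uses $r_j^{1-\beta_j}\le 1$.

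What each route buys:
\begin{itemize}
\item Your route is uniform in $\beta_j\in[0,1)$ and can be made fully elementary, with explicit constant $\omega_j/\pi$.
\item The paper's route never uses the zero trace, so it in fact proves the stronger embedding $L^2_{\vec\beta}(\Omega)\hookrightarrow H^1(\Omega)'$. Your argument cannot give that.
\end{itemize}
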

\begin{proof}
    We start with $u\in H^{1/2}(\Gamma)$. Then there is a $y_u\in H^1(\Omega)$ with $y_u\vert _\Gamma=u$, see Theorem \ref{T2.2}. Hence $y_u-y_d\in L^2(\Omega)\hookrightarrow L^2_{\vec\beta}(\Omega)\hookrightarrow H^{-1}(\Omega)$ where the latter embedding follows from the definition of the $H^{-1}(\Omega)$-norm, 
    \begin{align*}
        \Vert y_u-y_d\Vert_{H^{-1}(\Omega)} = \sup_{v\in H^1_0(\Omega)} \frac{(y-y_d,v)}{\Vert v\Vert_{H^1_0(\Omega_j)}} \le
        \Vert y-y_d\Vert_{L^2_{\vec\beta}(\Omega)} \sup_{v\in H^1_0(\Omega)} \frac{\Vert v\Vert_{L^2_{-\vec\beta}(\Omega)}}{\Vert v\Vert_{H^1_0(\Omega)}},
    \end{align*} 
    and $\Vert v\Vert_{L^2(\Omega_j)}\le c\Vert v\Vert_{H^1_0(\Omega_j)}$ in the case $\beta_j=0$ and
    $\Vert r_j^{-\beta_j}v\Vert_{L^2(\Omega_j)}\le\Vert v\Vert_{V^{1,2}_{1-\beta_j}(\Omega_j)} \sim\Vert v\Vert_{W^{1,2}_{1-\beta_j}(\Omega_j)}\le c\Vert v\Vert_{H^1_0(\Omega_j)}$ in the case $\beta_j > 0$, see Remark \ref{rem:VkHk}. 
\end{proof}

\begin{lemma}\label{le::L4.1} For any $g\in L^2_{\vec\beta}(\Omega)$, where $1-\lambda_{A,j}<\beta_j<1$, $\beta_j\geq 0$ for all $j\in\{1,\ldots,m\}$, the solution $\phi_g\in H^1_0(\Omega)$ of the dual equation \eqref{eq::Sstar} satisfies that $\phi_g\in V^{2,2}_{\vec\beta}(\Omega)$, $\partial_{\conormal_A} \phi_g \in \prod_{j=1}^m W^{1/2,2}_{\vec\beta}(\Gamma_{j})$ and there exists $c_{\eqref{eq:V2fromL2}}>0$ such that 
    \begin{align}\label{eq:V2fromL2}
        \Vert \phi_g\Vert_{V^{2,2}_{\vec\beta}(\Omega)} +\sum_{j =1}^m \Vert \partial_{\conormal_A} \phi_g \Vert_{W^{1/2,2}_{\vec\beta}(\Gamma_j)}\le c_{\eqref{eq:V2fromL2}}\Vert g\Vert_{L^2_{\vec\beta}(\Omega)}.
    \end{align}
\end{lemma}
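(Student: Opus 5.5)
By Lemma \ref{lem:4.1}, $g\in L^2_{\vec\beta}(\Omega)\hookrightarrow H^{-1}(\Omega)$, so $\phi_g=(\mA_0^\star)^{-1}g\in H^1_0(\Omega)$ exists and $\Vert\phi_g\Vert_{H^1_0(\Omega)}\le C_{\mA}c_i\Vert g\Vert_{L^2_{\vec\beta}(\Omega)}$. The plan is to upgrade this $H^1$ bound to weighted $V^{2,2}$ regularity by localization. The argument separates the interior/edge region, where classical regularity applies, from the corner neighborhoods, where we invoke the Kondrat'ev-type theory of \cite{KozlovMazyaRossmann1997}. The bound on the conormal derivative then follows from a trace argument.

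First, the adjoint operator in strong form is $-\nabla\cdot(A\nabla\phi)-b\cdot\nabla\phi+(a_0-\nabla\cdot b)\phi$, using $A=A^T$. Its coefficients are smooth away from $S$ by Assumption \ref{A2.1}. Away from the vertices, i.e.\ for $\chi_0\phi_g$, the boundary is flat, and standard $H^2$ regularity for Dirichlet problems gives $\Vert\chi_0\phi_g\Vert_{H^2}\le c(\Vert g\Vert_{L^2(\Omega^0)}+\Vert\phi_g\Vert_{H^1})$. Here we use $L^2_{\vec\beta}=L^2$ locally away from $S$. Near $\x_j$ we consider $\chi_j\phi_g$. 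It solves a Dirichlet problem in $\Omega_{R_j}$ whose right-hand side is $\chi_j g$ plus commutator terms involving $\nabla\chi_j$ and lower-order terms. We move the lower-order terms $b\cdot\nabla\phi_g$ and $(a_0-\nabla\cdot b)\phi_g$ to the right-hand side. They lie in $L^2_{\beta_j}$: $\nabla\phi_g\in L^2$, and $\phi_g\in H^1_0$ gives $r_j^{-1}\phi_g\in L^2$ by Hardy's inequality, so these terms are in $L^2_{\beta_j}$ for $\beta_j\ge0$. If necessary we bootstrap once. The principal part with frozen coefficients $A(\x_j)$ in the cone $K_j$ then falls under \cite[Theorem 6.1.1 / Sect.\ 6.2]{KozlovMazyaRossmann1997}. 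Since the strip $\beta_j-1\in(-\lambda_{A,j},0)$ contains no eigenvalues of the pencil, equivalently $1-\lambda_{A,j}<\beta_j<1$, the solution lies in $V^{2,2}_{\beta_j}$. The relevant starting space $V^{1,2}_{0}$, which coincides with $H^1_0$ by Remark \ref{rem:VkHk}, and the target space $V^{2,2}_{\beta_j}$ are both governed by weights whose line contains no eigenvalue. The variable-coefficient perturbation $A(x)-A(\x_j)=O(r_j)$ is absorbed in the standard way by taking $R_j$ small. Summing over $j$ gives the first part of \eqref{eq:V2fromL2}.

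For the conormal derivative, we note that $\nabla\phi_g\in V^{1,2}_{\vec\beta}(\Omega)^2$ whenever $\phi_g\in V^{2,2}_{\vec\beta}(\Omega)$. By Remark \ref{rem:VkHk} with $0\le\beta_j<1$, the space $V^{1,2}_{\beta_j}$ embeds into $W^{1,2}_{\beta_j}$; for $\beta_j>0$ they are equal. We then multiply by the smooth matrix $A$ and the constant normal $n$ on each side $\Gamma_j$. The trace norm \eqref{eq::tracenorm}, applied side by side, gives $\partial_{n_A}\phi_g=(A\nabla\phi_g)\cdot n\in W^{1/2,2}_{\vec\beta}(\Gamma_j)$, with norm bounded by $c\Vert\phi_g\Vert_{V^{2,2}_{\vec\beta}(\Omega)}$. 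A product $\prod_j$ is needed because $n$ jumps at the corners. At this point we need the variational conormal derivative to agree with this trace. This follows from Green's formula, which is justified by density, since $\phi_g\in V^{2,2}_{\vec\beta}$ with $\beta_j<1$ gives enough integrability of $\nabla\phi_g$ on $\Gamma$.

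The main obstacle is the corner step, specifically the case $\beta_j=0$ allowed in the statement. In that case $V^{1,2}_0\ne W^{1,2}_0$, and the pure $W^{1,2}_{\beta}$ trace framework must be handled carefully. We would treat the trace bound via the $V$-spaces, with $V^{1/2,2}_{0}\hookrightarrow W^{1/2,2}_{0}$. We would also verify that lower-order terms do not spoil the weight. If Hardy's inequality is insufficient for them, we would first prove the estimate with $\beta_j$ close to $1$ and then iterate down to the desired $\beta_j$, using that there are no pencil eigenvalues in between.
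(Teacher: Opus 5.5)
Your proposal is correct and follows essentially the paper's route. The paper gets the weighted estimate in one line from the global Kozlov--Maz'ya--Rossmann a priori bound (6.3.15), $\Vert\phi_g\Vert_{V^{2,2}_{\vec\beta}(\Omega)}\le c\Vert g\Vert_{L^2_{\vec\beta}(\Omega)}+\Vert\phi_g\Vert_{V^{1,2}_{\vec\beta}(\Omega)}$, and then bounds the last term by $c\Vert\phi_g\Vert_{H^1_0(\Omega)}\le c\Vert g\Vert_{H^{-1}(\Omega)}$ via Lemma \ref{lem:4.1}, using $\phi_g=0$ on $\Gamma$ when $\beta_j=0$; your localization, coefficient freezing and Hardy argument unpack exactly this citation. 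The paper treats the conormal derivative by the same trace argument you give ($A\nabla\phi_g\in V^{1,2}_{\vec\beta}(\Omega)$, with a product over sides because $n$ jumps), so the case $\beta_j=0$ needs no iteration: the embedding $V^{1,2}_{\beta_j}\hookrightarrow W^{1,2}_{\beta_j}$ already suffices.
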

\begin{proof}
    We get 
    $\Vert\phi_g\Vert_{V^{2,2}_{\vec\beta}(\Omega)}\le c\Vert g\Vert_{L^2_{\vec\beta}(\Omega)}+\Vert\phi_g\Vert_{V^{1,2}_{\vec\beta}(\Omega)}
    $ from \cite[(6.3.15)]{KozlovMazyaRossmann1997}.
    Using the embedding $L^2_{\vec\beta}(\Omega)\hookrightarrow H^{-1}(\Omega)$, see Lemma \ref{lem:4.1}, we can conclude $\Vert\phi_g\Vert_{V^{1,2}_{\vec\beta}(\Omega)}\sim \Vert\phi_g\Vert_{W^{1,2}_{\vec\beta}(\Omega)} \le c\Vert\phi_g\Vert_{H^1_0(\Omega)}\le c\Vert g\Vert_{H^{-1}(\Omega)}\le c\Vert g\Vert_{L^2_{\vec\beta}(\Omega)}$ such that one part of \eqref{eq:V2fromL2} follows.  In the case $\beta_j=0$ we employed also that $\phi_g=0$ on $\Gamma$; cf. Remark \ref{rem:VkHk}.

    The conormal derivative $\partial_{n_A} \phi$ of a function $\phi\in V^{2,2}_{\vec\beta}(\Omega)$ is defined via
\[\partial_{n_A} \phi=\tr(A\nabla\phi)\cdot n.\]
Its regularity is derived using $\nabla\phi\in V^{1,2}_{\vec\beta}(\Omega)$ to get $A\nabla\phi\in V^{1,2}_{\vec\beta}(\Omega)$ by Assumption \ref{A2.1}, as well as $\tr(A\nabla\phi)\in V^{1/2,2}_{\vec\beta}(\Gamma)$. Noticing that the normal vector $n$ jumps in the corners of the domain we have $\partial_{\conormal_A} \phi \in \prod_{j=1}^m V^{1/2,2}_{\vec\beta}(\Gamma_j) = \prod_{j=1}^m W^{1/2,2}_{\vec\beta}(\Gamma_j).$
 \end{proof}

\begin{remark}
Under the assumptions of the previous lemma it can be proved that $\partial_{n_A}\phi_g \in V^{1/2,2}_{\vec\beta}(\Gamma)$.
This global regularity follows noticing that, since $\phi_g$ is constant on $\Gamma$, then the tangential derivative is zero at every side, and hence, at the corners, we have that the derivatives w.r.t.\ non-colinear vectors are zero. We just sketch the idea of the proof. In the proof of \cite[Lemma A.2]{CasasMateosRaymond2009}, it is shown that if $z\in H^2(\Omega)$ and the tangential derivatives of $z$ vanish then $\partial_{n} z\in H^{1/2}(\Gamma)$. 
The function $z=r^{\beta_j} \phi_g\chi_j$ satisfies these assumptions (note that $r^{\beta_j} \phi_g\chi_j\in H^2(\Omega)$ is equivalent to $\phi_g\chi_j\in V^{2,2}_{\vec\beta}(\Omega)$) hence $\partial_{n}(r^{\beta_j} \phi_g\chi_j)\in H^{1/2}(\Gamma)$ and we have that $r^{\beta_j} \partial_{n}(\phi_g\chi_j)\in H^{1/2}(\Gamma)$. So, for any extension operator, $\mE(r^{\beta_j} \partial_{n}(\phi_g\chi_j))\in H^1(\Omega)$, and we have that $r^{-\beta_j}\mE(r^{\beta_j} \partial_{n}(\phi_g\chi_j))\in V^{1,2}_{\vec\beta}(\Omega)$. Therefore, taking the trace of this function we have that $\partial_{n}(\phi_g\chi_j)\in V^{1/2,2}_{\vec\beta}(\Gamma)$ and consequently $\mE \partial_{n}(\phi_g\chi_j)\in V^{1,2}_{\vec\beta}(\Omega)$. Summing for all $j$, we have that $\mE \partial_{n}\phi_g\in V^{1,2}_{\vec\beta}(\Omega)$ and therefore the trace of this function satisfies that $\partial_{n}\phi_g\in V^{1/2,2}_{\vec\beta}(\Gamma)$. The result is, proved for the Laplace operator. For our operator $A$ the result now follows via the usual technique of coefficient freezing and change of variable thanks to Assumption \ref{A2.1}.
\end{remark}

\medskip
For a regular enough $u$, we have a regularity result for $\mH u$ analogous to that of Lemma \ref{le::L4.1}.
\begin{lemma}\label{th::T4.2}
    For every $u\in W^{3/2,2}_{\vec\beta}(\Gamma)$ with $1-\lambda_{\Delta,j} < \beta_j <1$, $\beta_j\geq 0$ for all $j\in \{1,\ldots,m\}$, 
the harmonic extension $\mH u\in H^1(\Omega)$ of $u$ satisfies that $\mH u\in W^{2,2}_{\vec\beta}(\Gamma)$, $\partial_n\mH u \in \prod_{j=1}^m W^{1/2,2}_{\beta_j}(\Gamma_j)$, and there exists a constant $c_{\eqref{eq::Hureg}}>0$ independent of $u$ such that
    \begin{equation}\label{eq::Hureg}
    \Vert \mH u\Vert_{W^{2,2}_{\vec\beta}(\Omega)} + \sum_{j = 1}^m\Vert \partial_n\mH u\Vert_{W^{1/2,2}_{\beta_j}(\Gamma_j)} \leq c_{\eqref{eq::Hureg}} \Vert u \Vert_{W^{3/2,2}_{\vec\beta}(\Gamma)}.
    \end{equation}
\end{lemma}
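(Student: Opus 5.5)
The plan is to reduce the problem to a homogeneous Dirichlet problem with a right-hand side in $L^2_{\vec\beta}(\Omega)$, and then reuse the machinery of Lemma \ref{le::L4.1} with $-\Delta$ in place of $\mA$.

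By the definition \eqref{eq::tracenorm} of the trace norm, for $u\in W^{3/2,2}_{\vec\beta}(\Gamma)$ there is a lifting $z\in W^{2,2}_{\vec\beta}(\Omega)$ with $z\vert_{\Gamma\setminus S}=u$ and $\Vert z\Vert_{W^{2,2}_{\vec\beta}(\Omega)}\le 2\Vert u\Vert_{W^{3/2,2}_{\vec\beta}(\Gamma)}$. Two facts about $z$ are needed.
\begin{itemize}
\item Since $0\le\beta_j<1$, we have $W^{2,2}_{\vec\beta}(\Omega)\hookrightarrow H^1(\Omega)$. Near $\x_j$ this follows from $W^{1,2}_{\beta_j}=V^{1,2}_{\beta_j}$ applied to $\nabla z$ together with a Hardy-type inequality, or directly from the splitting \eqref{eq:H2V2} and the embedding $V^{2,2}_{\beta_j}\hookrightarrow V^{1,2}_{0}$ valid for $\beta_j<1$. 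Hence $z$ is an admissible $H^1$ extension of $u$.
\item $\Delta z\in L^2_{\vec\beta}(\Omega)$, with norm bounded by $\Vert z\Vert_{W^{2,2}_{\vec\beta}(\Omega)}$.
\end{itemize}

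Write $\mH u=z+\eta$, where $\eta\in H^1_0(\Omega)$ solves $(\nabla\eta,\nabla\zeta)_\Omega=(\Delta z,\zeta)_\Omega$ for all $\zeta\in H^1_0(\Omega)$. By Lemma \ref{lem:4.1}, $\Delta z\in L^2_{\vec\beta}(\Omega)\hookrightarrow H^{-1}(\Omega)$, so $\eta$ exists and $\Vert\eta\Vert_{H^1_0(\Omega)}\le c\Vert\Delta z\Vert_{L^2_{\vec\beta}(\Omega)}$. We then repeat the argument of Lemma \ref{le::L4.1} with $-\Delta$ in place of $\mA$; this is the special case $A=I$, $b=0$, $a_0=0$, for which the singular exponents are $\lambda_{\Delta,j}$. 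Under the hypothesis $1-\lambda_{\Delta,j}<\beta_j$, the weighted regularity estimate \cite[(6.3.15)]{KozlovMazyaRossmann1997} gives
\[
\Vert\eta\Vert_{V^{2,2}_{\vec\beta}(\Omega)}\le c\Vert\Delta z\Vert_{L^2_{\vec\beta}(\Omega)}+\Vert\eta\Vert_{V^{1,2}_{\vec\beta}(\Omega)}.
\]
The last term is controlled by $\Vert\eta\Vert_{H^1_0(\Omega)}$ through Remark \ref{rem:VkHk}; this is also valid when $\beta_j=0$, because $\eta$ vanishes on $\Gamma$. Finally, $V^{2,2}_{\vec\beta}(\Omega)\hookrightarrow W^{2,2}_{\vec\beta}(\Omega)$, so $\mH u\in W^{2,2}_{\vec\beta}(\Omega)$ with the bound in \eqref{eq::Hureg}. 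The statement writes $W^{2,2}_{\vec\beta}(\Gamma)$, which is evidently meant to be $\Omega$.

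For the normal derivative we argue side by side. On $\Gamma_j$, $\partial_n\mH u=\tr(\nabla z)\cdot n+\tr(\nabla\eta)\cdot n$, and $n$ is constant on each side. Since $\nabla\eta\in V^{1,2}_{\vec\beta}(\Omega)$, its trace lies in $V^{1/2,2}_{\vec\beta}$, and on each side this space coincides with $W^{1/2,2}_{\beta_j}(\Gamma_j)$, exactly as in Lemma \ref{le::L4.1}. For $z$, we have $\nabla z\in W^{1,2}_{\vec\beta}(\Omega)$, and for $0<\beta_j<1$ this equals $V^{1,2}_{\vec\beta}(\Omega)$; its trace on $\Gamma_j$ is in $W^{1/2,2}_{\beta_j}(\Gamma_j)$. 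When $\beta_j=0$ the space is simply unweighted $H^1$ near $\x_j$, and the trace lies in $H^{1/2}(\Gamma_j)$.

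The main obstacle is the justification of the regularity step, i.e. making sure that \cite[(6.3.15)]{KozlovMazyaRossmann1997} applies for the Laplacian with the given weights. The weight condition needed is $1-\lambda_{\Delta,j}<\beta_j<1$, which is precisely the hypothesis, while $\beta_j\ge 0$ guarantees the $H^{-1}$ embedding. The $W$-versus-$V$ bookkeeping of the lifting $z$ near the corners, where the constants $z(\x_j)$ from \eqref{eq:H2V2} appear, is routine: constants have zero gradient, so they affect neither $\Delta z$ nor $\partial_n z$.
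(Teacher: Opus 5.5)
Your proposal is correct and follows the paper's proof. The paper also writes $\mH u=\mE u+w$ with $w\in H^1_0(\Omega)$ solving $-\Delta w=\Delta\mE u$, uses $\Delta\mE u\in L^2_{\vec\beta}(\Omega)$ to get $w\in V^{2,2}_{\vec\beta}(\Omega)$ as in Lemma \ref{le::L4.1}, takes the infimum over extensions (your near-optimal lifting with factor $2$ is equivalent), and refers to Lemma \ref{le::L4.1} for the normal derivative; you simply supply details the paper leaves implicit, such as $W^{2,2}_{\vec\beta}(\Omega)\hookrightarrow H^1(\Omega)$ for the lifting and the separate trace argument for $\nabla z$ (for $\eta$ with $\beta_j=0$ you only have the embedding $V^{1/2,2}\hookrightarrow W^{1/2,2}$ rather than equality, which is all you need).
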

\begin{proof}
    We follow \cite[Section 7.3.4]{KozlovMazyaRossmann1997} and seek $\mH u=\mE u+w$ where $w$ satisfies 
    \[
      -\Delta w=\Delta\mE u \quad\text{in }\Omega, \qquad w=0\quad\text{on }\Gamma.
    \]
    Since $\Delta\mE u\in L^2_{\vec\beta}(\Omega)$ we get $w\in V^{2,2}_{\vec\beta}(\Omega)\hookrightarrow W^{2,2}_{\vec\beta}(\Omega)$ and conclude
    \begin{align*}
      \Vert \mH u\Vert_{W^{2,2}_{\vec\beta}(\Omega)}&\le 
      \Vert \mE u\Vert_{W^{2,2}_{\vec\beta}(\Omega)} + \Vert w\Vert_{W^{2,2}_{\vec\beta}(\Omega)}\\&\le 
      \Vert \mE u\Vert_{W^{2,2}_{\vec\beta}(\Omega)} + c\Vert \Delta\mE u\Vert_{L^2_{\vec\beta}(\Omega)}\le 
      c\Vert \mE u\Vert_{W^{2,2}_{\vec\beta}(\Omega)}.
    \end{align*}
    Taking the infimum among all possible extensions, the first part of the result follows from the definition of the $\Vert u\Vert_{W^{3/2,2}_{\vec\beta}(\Gamma)}$-norm.

    The part for the normal derivative follows in the same way as in Lemma \ref{le::L4.1}.
\end{proof}

\medskip
We will assume some extra regularity for the target control.
\begin{assumption}\label{A:u_d}
    The function $u_d$ belongs to $W^{3/2,2}_{\vec\beta}(\Gamma)$ for all $\vec\beta$ such that $\beta_j\in [0,1)$ and $1-\lambda_j < \beta_j$ for all $j\in\{1,\ldots,m\}$.
\end{assumption}

\begin{theorem}\label{th::T4.14}
    If Assumption \ref{A:u_d} holds, then $\bar u \in  W^{3/2,2}_{\beta}(\Gamma)$ for  $1-\lambda_{j}<\beta_j<1$, $\beta_j\geq 0$ for all $j\in\{1,\ldots,m\}$, and
    \begin{equation}
  \Vert \bar u \Vert_{W^{3/2,2}_{\beta}(\Gamma)} \leq c \left(\Vert y_d\Vert_{L^2(\Omega)} + \Vert u_d\Vert_{W^{3/2,2}_{\vec\beta}(\Gamma)}\right)
    \end{equation}
\end{theorem}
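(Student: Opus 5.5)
The plan is to read the optimality condition \eqref{E3.4c} as a Neumann problem for the harmonic function $\mH\bar u - \mH u_d$, and then transfer the weighted regularity of the adjoint state and of $\mH u_d$ to $\bar u$ through the regularity theory of that Neumann problem.

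\emph{Identifying the Neumann data.} Set $g=\bar y-y_d\in L^2(\Omega)\hookrightarrow L^2_{\vec\beta}(\Omega)$ (Lemma \ref{lem:4.1}). Then $\bar\varphi=\phi_g$, and Lemma \ref{le::L4.1} gives $\bar\varphi\in V^{2,2}_{\vec\beta}(\Omega)$ with $\partial_{n_A}\bar\varphi\in\prod_j W^{1/2,2}_{\vec\beta}(\Gamma_j)$. Its norm is bounded by $c\Vert \bar y-y_d\Vert_{L^2(\Omega)}\le c(M_{\mS}\Vert\bar u\Vert_{H^{1/2}(\Gamma)}+\Vert y_d\Vert_{L^2(\Omega)})$, and by \eqref{eq::est.u.H12} this is at most $c(\Vert y_d\Vert_{L^2(\Omega)}+\Vert u_d\Vert_{H^{1/2}(\Gamma)})$. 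Since $\bar\varphi\in V^{2,2}_{\vec\beta}$, $\bar\varphi=0$ on $\Gamma$ and $g\in L^2$, Green's formula applied to $-\mathfrak{a}(\mE v,\bar\varphi)+(g,\mE v)$ (using $\mA^\star\bar\varphi=g$ in $\Omega$) reduces this expression to $-\int_\Gamma \partial_{n_A}\bar\varphi\, v$. The boundary term $\int_\Gamma v\bar\varphi\, b\cdot n$ vanishes. Together with Lemma \ref{th::T4.2} applied to $u_d$ (Assumption \ref{A:u_d}, with $\beta_j>1-\lambda_j\ge 1-\lambda_{\Delta,j}$), \eqref{E3.4c} shows that $z:=\mH(\bar u-u_d)$ is harmonic and satisfies
\[
(\nabla z,\nabla \mE v)_\Omega=\int_\Gamma \psi\, v\quad\forall v\in H^{1/2}(\Gamma),\qquad \psi:=\tikhonov^{-1}\partial_{n_A}\bar\varphi\in \prod_{j=1}^m W^{1/2,2}_{\vec\beta}(\Gamma_j).
\]
This is the rigorous form of \eqref{eq:normal:derivative:Hu2}. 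Taking $v=1$ shows that $\psi$ has zero mean, which is the compatibility condition. So $z$ is the solution, unique up to a constant, of the Neumann problem $-\Delta z=0$, $\partial_n z=\psi$.

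\emph{Weighted regularity for the Neumann problem.} I would invoke the Kozlov--Maz'ya--Rossmann theory for the Neumann Laplacian in $W^{2,2}_{\vec\beta}$, \cite[Sect.~7.3]{KozlovMazyaRossmann1997}, in the same way Lemma \ref{th::T4.2} used the Dirichlet version. For Neumann data in $\prod_j W^{1/2,2}_{\beta_j}(\Gamma_j)$ with $0\le\beta_j<1$ and $\beta_j>1-\lambda_{\Delta,j}$, one has $z\in W^{2,2}_{\vec\beta}(\Omega)$. The relevant singular exponents are $\pi/\omega_j$, which coincide with the Dirichlet ones, so the same weight range applies. The estimate is
\[
\Vert z-\bar z\Vert_{W^{2,2}_{\vec\beta}(\Omega)}\le c\Big(\sum_j\Vert\psi\Vert_{W^{1/2,2}_{\beta_j}(\Gamma_j)}+\Vert z-\bar z\Vert_{H^1(\Omega)}\Big),
\]
where $\bar z$ is the mean of $z$. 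This is the step I expect to be the main obstacle. The difficulty is formulating the Neumann problem in the $W$-spaces rather than the $V$-spaces. The way around it is to subtract a function $\chi_j$ times a constant near each corner, as in \eqref{eq:H2V2}; this works because the constant is annihilated by $\Delta$ and $\partial_n$. The cited theorem then applies in $V^{2,2}_{\vec\beta}$, with no extra compatibility conditions on $\psi$ at the vertices since $\beta_j>0$ or $\lambda>1/2$ is used appropriately. The $H^1$ term is controlled by $\vert\bar u-u_d\vert_{H^{1/2}(\Gamma)}$, and hence by \eqref{eq::est.u.H12}.

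\emph{Conclusion.} Taking traces via \eqref{eq::tracenorm} gives $\bar u-u_d=\tr z\in W^{3/2,2}_{\vec\beta}(\Gamma)$. The constant $\bar z$ is harmless, because its trace norm is bounded by $c\Vert\bar u-u_d\Vert_{L^2(\Gamma)}$. Hence $\bar u=u_d+\tr z\in W^{3/2,2}_{\vec\beta}(\Gamma)$, and collecting the bounds above, together with $\Vert u_d\Vert_{H^{1/2}}\le c\Vert u_d\Vert_{W^{3/2,2}_{\vec\beta}}$ (valid since $\beta_j<1$), yields the stated estimate.
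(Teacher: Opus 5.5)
Your proposal is correct and follows essentially the paper's route: bound $\partial_{n_A}\bar\varphi$ via Lemma \ref{le::L4.1}, read \eqref{E3.4c} as a Neumann problem for a harmonic extension, apply weighted Neumann regularity, and take traces. Your Green's-formula step makes \eqref{eq:normal:derivative:Hu2} rigorous, and working with $\mH(\bar u-u_d)$ instead of $\mH\bar u$ means you only need $u_d\in W^{3/2,2}_{\vec\beta}(\Gamma)$ rather than Lemma \ref{th::T4.2}. The step you flag as the main obstacle is handled in the paper by rewriting the Neumann problem as $-\Delta\mH\bar u+\mH\bar u=\mH\bar u$ and citing the Neumann regularity result \cite[Theorem 3.5(c)]{AMR2024} for that zeroth-order-shifted operator, which avoids both the kernel of constants and the vertex bookkeeping in $V$-spaces that your sketch leaves vague.
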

\begin{proof}
  Since $u_d\in W^{3/2,2}_{\vec\beta}(\Gamma)\hookrightarrow H^{1/2}(\Gamma)$ we get from \eqref{eq::est.u.H12} that 
  \begin{align}
  \Vert\bar y\Vert_{H^1(\Omega)}\le c\Vert\bar u\Vert_{H^{1/2}(\Gamma)}&\le \notag
  c\left(M_{\mS}\Vert\bar y_d\Vert_{L^2(\Omega)}  + \kappa M_{\mD}\Vert u_d\Vert_{H^{1/2}(\Gamma)}\right) \\\label{eq:proofT4.7} &\le
  c\left( \Vert\bar y_d\Vert_{L^2(\Omega)}  + \Vert u_d\Vert_{W^{3/2,2}_{\vec\beta}(\Gamma)}\right).
  \end{align}
  Lemma \ref{le::L4.1} leads then with $\bar\varphi=\phi_{\bar y-y_d}$ to
  \[
  \sum_{j = 1}^m \Vert \partial_{n_A}\bar\varphi \Vert_{W^{1/2,2}_{\beta_j}(\Gamma_j)} \leq c
  \Vert\bar y-y_d\Vert_{L^2_{\vec\beta}(\Omega)}\le c
  \left( \Vert\bar y_d\Vert_{L^2(\Omega)}  + \Vert u_d\Vert_{W^{3/2,2}_{\vec\beta}(\Gamma)}\right).
  \]
  With \eqref{eq:normal:derivative:Hu2} and Lemma \ref{th::T4.2} we conclude
  \begin{align}
        \sum_{j = 1}^m \Vert \partial_n\mH \bar u \Vert_{W^{1/2,2}_{\beta_j}(\Gamma_j)} &\leq  
        \kappa^{-1}\sum_{j = 1}^m \Vert \partial_{n_A}\bar\varphi \Vert_{W^{1/2,2}_{\beta_j}(\Gamma_j)} +
        \sum_{j = 1}^m \Vert \partial_n\mH \bar u_d \Vert_{W^{1/2,2}_{\beta_j}(\Gamma_j)} \notag\\&\le c
        \left(\Vert y_d\Vert_{L^2(\Omega)} + \Vert u_d\Vert_{W^{3/2,2}_{\vec\beta}(\Gamma)}\right).
        \label{e1}
  \end{align}

   Finally we use the definition of trace norm \eqref{eq::tracenorm}, and the fact that $\mH\bar u$ solves the Neumann problem
    \[-\Delta\mH\bar u + \mH\bar u = \mH\bar u\text{ in }\Omega,\ \partial_n\mH\bar u = \partial_n\mH\bar u\text{ on }\Gamma\]
    to deduce from \cite[Theorem 3.5(c)]{AMR2024} that
    \[\Vert \bar u\Vert_{W^{3/2,2}_{\vec\beta}(\Gamma)} \leq 
    \Vert \mH \bar u\Vert_{W^{2,2}_{\vec\beta}(\Omega)} \leq C 
    \left(\sum_{j =1}^m \Vert \partial_n \mH\bar u\Vert_{W^{1/2,2}_{\vec\beta}(\Gamma_j)}
    + \Vert \mH \bar u\Vert_{L^{2}(\Omega)}\right)
    \]
    Taking into account that $\Vert \mH \bar u\Vert_{L^{2}(\Omega)}\leq M_{\mH}
    \Vert \bar u\Vert_{H^{1/2}(\Gamma)}$, the result follows from \eqref{e1} and \eqref{eq:proofT4.7}.    
\end{proof}

\medskip
We will end this section extending the definition of harmonic extension via the transposition method. First we notice the following result that follows from Lemma \ref{le::L4.1} when the operator is the Laplace operator.
\begin{lemma}\label{th::T4.3}
    For every $g\in L^2(\Omega)$ the unique solution $\psi_g\in  H^1_0(\Omega)$ of
    \[(\nabla\psi_g,\nabla\zeta)_\Omega = (g,\zeta)_\Omega\ \forall \zeta\in H^1_0(\Omega)\]
    satisfies that, for $1-\lambda_{\Delta,j} < \beta_j <1$, $\beta_j\geq 0$ for all $j\in \{1,\ldots,m\}$, $\psi_g\in W^{2,2}_{\vec\beta}(\Omega)$, $\partial_n\psi_g\in \prod_{j=1}^m  W^{1/2,2}_{\vec\beta}(\Gamma_j)$ 
    and there exists $c_{\mathrm{d}}>0$ such that
    \[\Vert \psi_g \Vert_{W^{2,2}_{\vec\beta}(\Omega)} + \sum_{j=1}^m \Vert \partial_n\psi_g \Vert_{W^{1/2,2}_{\vec\beta}(\Gamma_j)}
    \leq c_{\mathrm{d}} \Vert g\Vert_{L^2(\Omega)}.
    \]
\end{lemma}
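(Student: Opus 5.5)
The plan is to specialize Lemma \ref{le::L4.1} to the Laplacian and then pass from $V^{2,2}_{\vec\beta}$ to $W^{2,2}_{\vec\beta}$.

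\emph{Step 1: specialization.} Take $A=I$, $b=0$, $a_0=0$. These coefficients satisfy Assumption \ref{A2.1}, and in this case $\mathfrak{a}(\zeta,\phi)=(\nabla\zeta,\nabla\phi)_\Omega$ is symmetric and coercive on $H^1_0(\Omega)$. Hence $\psi_g$ is exactly the solution $\phi_g$ of \eqref{eq::Sstar} for this operator, and $\lambda_{A,j}=\lambda_{\Delta,j}$. The admissible range $1-\lambda_{\Delta,j}<\beta_j<1$, $\beta_j\ge 0$, is then the one required in Lemma \ref{le::L4.1}.

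\emph{Step 2: the right-hand side.} By Lemma \ref{lem:4.1}, $g\in L^2(\Omega)\hookrightarrow L^2_{\vec\beta}(\Omega)$ for $\vec\beta\in[\vec 0,\vec 1)$. Concretely, $\Vert r_j^{\beta_j}g\Vert_{L^2(\Omega_{R_j})}\le \Vert g\Vert_{L^2}$ since $R_j\le 1$, and so $\Vert g\Vert_{L^2_{\vec\beta}(\Omega)}\le c\Vert g\Vert_{L^2(\Omega)}$.

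\emph{Step 3: apply Lemma \ref{le::L4.1}.} Combining Steps 1 and 2, Lemma \ref{le::L4.1} gives $\psi_g\in V^{2,2}_{\vec\beta}(\Omega)$ and $\partial_n\psi_g=\partial_{n_A}\psi_g\in\prod_j W^{1/2,2}_{\vec\beta}(\Gamma_j)$. It also gives
\[\Vert \psi_g\Vert_{V^{2,2}_{\vec\beta}(\Omega)}+\sum_{j=1}^m\Vert \partial_n\psi_g\Vert_{W^{1/2,2}_{\vec\beta}(\Gamma_j)}\le c\,\Vert g\Vert_{L^2(\Omega)}.\]

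\emph{Step 4: from $V^{2,2}_{\vec\beta}$ to $W^{2,2}_{\vec\beta}$.} This is the only point needing care. We need $V^{2,2}_{\vec\beta}(\Omega)\hookrightarrow W^{2,2}_{\vec\beta}(\Omega)$ for $\beta_j\ge0$, including the endpoint $\beta_j=0$. Locally the $V$-norm weights derivatives of order $|\alpha|$ by $r_j^{\beta_j-2+|\alpha|}$, while the $W$-norm uses $r_j^{\beta_j}$. Since $r_j\le R_j\le 1$ on $\Omega_{R_j}$ and $\beta_j-2+|\alpha|\le\beta_j$ for $|\alpha|\le2$, we have $r_j^{\beta_j}\le r_j^{\beta_j-2+|\alpha|}$, so each $W$-term is bounded by the corresponding $V$-term. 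On $\Omega^0$ both norms are plain $H^2$ norms.

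The cut-off functions $\chi_j$ require a short check. Derivatives of $\chi_j$ are supported away from the corner, where all weights are comparable to constants. This gives $\Vert\psi_g\Vert_{W^{2,2}_{\vec\beta}(\Omega)}\le c\Vert\psi_g\Vert_{V^{2,2}_{\vec\beta}(\Omega)}$. The same embedding is used without comment in the proof of Lemma \ref{th::T4.2}, so it can be cited as is. Combining this with Step 3 yields the stated estimate with constant $c_{\mathrm d}$.
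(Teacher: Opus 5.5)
Your proposal is correct and matches the paper, which obtains this lemma by specializing Lemma~\ref{le::L4.1} to the Laplacian ($A=I$, $b=0$, $a_0=0$, so $\lambda_{A,j}=\lambda_{\Delta,j}$ and $\partial_{n_A}=\partial_n$), using the embedding $V^{2,2}_{\vec\beta}(\Omega)\hookrightarrow W^{2,2}_{\vec\beta}(\Omega)$ implicitly, as in the proof of Lemma~\ref{th::T4.2}. Your termwise weight comparison already settles that embedding, because both norms are evaluated on the same functions $\chi_j\psi_g$; the extra check on derivatives of the cut-offs is not needed.
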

For $\vec\beta$ satisfying $1-\lambda_{\Delta,j} < \beta_j <1$, $\beta_j\geq 0$ for all $j\in \{1,\ldots,m\}$, we define the harmonic extension of $u\in \left(\prod_{j=1}^m  W^{1/2,2}_{\vec\beta}(\Gamma_j)\right)'$ as the unique $\mH u\in L^2(\Omega)$ such that
\begin{equation}\label{eq::defHarmExtDual}
(\mH u,g)_\Omega = - \langle u, \partial_n \psi_g\rangle_{\left(\prod_{j=1}^m  W^{1/2,2}_{\vec\beta}(\Gamma_j)\right)'\times
\prod_{j=1}^m  W^{1/2,2}_{\vec\beta}(\Gamma_j)}\ \forall g\in L^2(\Omega).
\end{equation}
\begin{theorem}\label{th::T4.4}For every $u\in \left(\prod_{j=1}^m  W^{1/2,2}_{\vec\beta}(\Gamma_j)\right)'$ with $1-\lambda_{\Delta,j} < \beta_j <1$, $\beta_j\geq 0$ for all $j\in \{1,\ldots,m\}$, there exists a unique $\mH u\in L^2(\Omega)$ solution of \eqref{eq::defHarmExtDual} and there exists $M'_{\mH}>0$ such that
\[\Vert \mH u \Vert_{L^2(\Omega)} \leq M'_{\mH} \Vert u \Vert_{\left(\prod_{j=1}^m  W^{1/2,2}_{\vec\beta}(\Gamma_j)\right)'}.\]
If, in addition, $u\in H^{1/2}(\Gamma)$, then $\mH u$ is the harmonic extension of $u$ in the sense of \eqref{eq::Hu}.
\end{theorem}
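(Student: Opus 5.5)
The plan is the standard transposition argument, built on Lemma \ref{th::T4.3}. Write $X:=\prod_{j=1}^m W^{1/2,2}_{\vec\beta}(\Gamma_j)$. By Lemma \ref{th::T4.3}, the map $\mathcal{N}:L^2(\Omega)\to X$, $g\mapsto \partial_n\psi_g$, is linear and continuous with norm at most $c_{\mathrm d}$. Linearity holds because $g\mapsto\psi_g$ is linear, and so is the normal derivative on each side.

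For fixed $u\in X'$, consider the functional $\ell_u(g):=-\langle u,\mathcal{N}g\rangle_{X'\times X}$ on $L^2(\Omega)$. It is linear and satisfies
\[
\vert \ell_u(g)\vert\le \Vert u\Vert_{X'}\,c_{\mathrm d}\Vert g\Vert_{L^2(\Omega)} .
\]
The Riesz representation theorem in $L^2(\Omega)$ then gives a unique $\mH u\in L^2(\Omega)$ with $(\mH u,g)_\Omega=\ell_u(g)$ for all $g$. This is \eqref{eq::defHarmExtDual}, and it comes with $\Vert\mH u\Vert_{L^2(\Omega)}\le c_{\mathrm d}\Vert u\Vert_{X'}$, so we may take $M'_{\mH}=c_{\mathrm d}$. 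Uniqueness is immediate: if two functions satisfy the identity for all $g\in L^2(\Omega)$, their difference is orthogonal to all of $L^2(\Omega)$.

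For consistency, let $u\in H^{1/2}(\Gamma)$. First, $H^{1/2}(\Gamma)\hookrightarrow L^2(\Gamma)\hookrightarrow X'$ via $\langle u,v\rangle=\int_\Gamma uv$. The second embedding needs $X\hookrightarrow L^2(\Gamma)$, or at least $L^1(\Gamma)$ paired suitably. Since $\beta_j<1$, I would check this through the weighted estimate $\int_{\Gamma_j} r^{-2\beta_j}\cdots$, or more safely by pairing $H^{1/2}$ against $H^{-1/2}$ and using $W^{1/2,2}_{\beta_j}(\Gamma_j)\hookrightarrow$ the piecewise dual of $H^{1/2}$.

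It then suffices to show $(\mH u,g)_\Omega=-\int_\Gamma u\,\partial_n\psi_g$ for the classical $\mH u$ of \eqref{eq::Hu}. I would use Green's formula with $\psi_g\in W^{2,2}_{\vec\beta}(\Omega)$ and $\mH u\in H^1(\Omega)$:
\[
(\mH u,g)_\Omega=(\nabla\mH u,\nabla\psi_g)_\Omega-\langle\partial_n\psi_g,u\rangle_\Gamma .
\]
The first term vanishes by \eqref{eq::Hu}, since $\psi_g\in H^1_0(\Omega)$. Here $-\Delta\psi_g=g$ in the distributional sense, which follows from the weak formulation. The normal derivative identity itself follows from the definition of the variational normal derivative on $H^1_\Delta(\Omega)$ given in Section \ref{S3}. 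One must check that it coincides with the trace of $\nabla\psi_g\cdot n$ for $\psi_g\in W^{2,2}_{\vec\beta}(\Omega)$.

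\textbf{Main obstacle.} The expected difficulty is justifying this Green formula in the weighted setting, and the duality pairing between $u\in H^{1/2}(\Gamma)$ and $\partial_n\psi_g\in X$, near the corners. I would first prove it for $u$ smooth, say $u$ the trace of a $C^\infty(\bar\Omega)$ function, where $\psi_g$ is smooth away from the corners. I would cut out small balls around the vertices and let the radius tend to zero, using the weighted integrability to kill the arc terms. Then I would extend to all $u\in H^{1/2}(\Gamma)$ by density: both sides are continuous in $u$ for the $H^{1/2}(\Gamma)$ topology, the left side because $\mH$ is bounded into $H^1(\Omega)$, and the right side by the embedding into $X'$.
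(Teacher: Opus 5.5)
Your proposal is correct and follows the paper's route. The paper cites the transposition argument of Casas--Mateos--Raymond, with Lemma~\ref{th::T4.3} giving the bounded map $g\mapsto\partial_n\psi_g$, so Riesz in $L^2(\Omega)$ yields existence, uniqueness and $M'_{\mH}=c_{\mathrm d}$; it gets consistency from $(\nabla\mH u,\nabla\psi_g)_\Omega=0$ plus integration by parts, which you justify in more detail.

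One caution: your first suggestion $L^2(\Gamma)\hookrightarrow X'$ fails whenever some $\beta_j>1/2$. For instance, a cut-off of $r_j^{\gamma}$ with $-\beta_j<\gamma\le-1/2$ lies in $W^{1/2,2}_{\beta_j}(\Gamma_j)$ but not in $L^2(\Gamma_j)$. So rely on your second route $X\hookrightarrow H^{-1/2}(\Gamma)$. It holds because $W^{1,2}_{\vec\beta}(\Omega)\hookrightarrow W^{1,p}(\Omega)$ for some $p>1$ when all $\beta_j<1$, which puts traces in $L^q(\Gamma)$ with $q>1$.
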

\begin{proof}
The proof follows the lines of the proof for the case $s=-1/2$ in \cite[Lemma A.6]{CasasMateosRaymond2009}, see eq. (A.17) in that reference. Here, we can use Lemma \ref{th::T4.3} instead of \cite[Lemma A.2]{CasasMateosRaymond2009} which is proved in that reference only for convex domains.

The last statement follows from the identity $(\nabla\mH u,\nabla\psi_g)_\Omega=0$ and integration by parts.
\end{proof}

\section{Numerical approximation. The equations}\label{S5}

\subsection{Discrete spaces and Lagrange interpolation}
Consider a family of  regular triangulations $\{\mathcal{K}_h\}$  graded with mesh grading parameters $\mu_j\in(0,1]$, $j\in\{1,\ldots,m\}$ in the sense of \cite[Section 3.1]{ASW1996}, see also \cite{AMPR2019}.
As usual, $Y_h\subset H^1(\Omega)\cap C(\bar\Omega)$ is the space of continuous piecewise linear functions and $Y_{0,h}=Y_h\cap H^1_0(\Omega)$, $U_h = Y_{h\vert\Gamma}$.
Let $\mI_h:C(\bar\Omega)\to Y_h$ be the Lagrange interpolation operator.

\begin{lemma}\label{L5.1interpolation} 
Let $\vec\beta$ be a vector such that $0\leq \beta_j < 1$ and let $s>0$ be an exponent satisfying 
    \begin{equation}\label{eq:sprime}
        s\leq 1 \quad\text{and}\quad s\leq \frac{1-\beta_j}{\mu_j} \quad\forall j\in\{1,\ldots,m\}.
    \end{equation}
    Then there exists a constant $c_{\mI}>0$, that may depend on $\vec\mu$ but is independent of $h$, such that
    \begin{align}
        \Vert y- \mI_h y\Vert_{H^1(\Omega)} & \leq c_{\mI} h^{s} \Vert y\Vert_{W^{2,2}_{\vec\beta}(\Omega)}\ \forall  y\in W^{2,2}_{\vec\beta}(\Omega),
        \label{eq:interpolation_error}\\
        \Vert u -\mI_h u \Vert_{H^{1/2}(\Gamma)} & \leq M_{\tr} c_{\mI} h^{s} \Vert u\Vert_{W^{3/2,2}_{\vec\beta}(\Gamma)}\ \forall u \in W^{3/2,2}_{\vec\beta}(\Gamma).
        \label{eq:interpolation_error_Gamma}
    \end{align}
\end{lemma}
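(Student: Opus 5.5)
The first estimate \eqref{eq:interpolation_error} is the standard anisotropic-free interpolation result on graded meshes, and I would prove it element by element. Let $h_K$ be the diameter of $K\in\mathcal{K}_h$ and $r_{j,K}=\operatorname{dist}(K,\x_j)$. By the grading of \cite[Section 3.1]{ASW1996}, elements touching $\x_j$ satisfy $h_K\sim h^{1/\mu_j}$, while elements away from $\x_j$ in $\Omega_{R_j}$ satisfy $h_K\sim h\,r_{j,K}^{1-\mu_j}$; outside all $\Omega_{R_j/2}$ we have $h_K\sim h$. Note first that $W^{2,2}_{\vec\beta}(\Omega)\hookrightarrow C(\bar\Omega)$ for $\beta_j<1$, so $\mI_h y$ is defined.

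\textbf{Elements away from the corners.} On elements not touching a corner, the classical Bramble--Hilbert estimate gives
\[
\vert y-\mI_hy\vert_{H^1(K)}\le c\,h_K\vert y\vert_{H^2(K)}\le c\,h_K r_{j,K}^{-\beta_j}\Vert r_j^{\beta_j}D^2y\Vert_{L^2(K)} .
\]
For the graded region,
\[
h_Kr_{j,K}^{-\beta_j}\sim h\,r_{j,K}^{1-\mu_j-\beta_j}.
\]
When $1-\beta_j\ge\mu_j$ this factor is bounded by $ch$. Otherwise I would write
\[
h_Kr_{j,K}^{-\beta_j}=h_K^{s}h_K^{1-s}r_{j,K}^{-\beta_j}\le c\,h^s r_{j,K}^{s(1-\mu_j)+1-s-\beta_j},
\]
using $h_K\le r_{j,K}$. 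The exponent is $1-\beta_j-s\mu_j\ge0$ by \eqref{eq:sprime}, so the factor is $\le ch^s$. Away from all corners the factor is simply $ch\le ch^s$.

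\textbf{Elements touching a corner.} This is the main step. Since $y\in W^{2,2}_{\beta_j}$ is not in $H^2$ near $\x_j$, I would use a weighted interpolation estimate on such an element, obtained on the reference element by scaling:
\[
\vert y-\mI_hy\vert_{H^1(K)}\le c\,h_K^{1-\beta_j}\Vert r_j^{\beta_j}D^2 y\Vert_{L^2(K)} .
\]
This follows from the embedding $W^{2,2}_{\beta}(\hat K)\hookrightarrow C(\hat K)\cap H^1(\hat K)$ for $\beta<1$ together with a Bramble--Hilbert argument in weighted norms; the weight scales as $r^{\beta}$, which produces the factor $h_K^{1-\beta_j}$. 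Since $h_K^{1-\beta_j}\sim h^{(1-\beta_j)/\mu_j}\le h^s$, these elements are also controlled. The $L^2$ part of the error is handled the same way, with one extra power of $h_K$.

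\textbf{Summation.} Summing the squares over all elements and using finite overlap and the cut-off decomposition $\chi_j$, the $H^1$ error is bounded by $ch^s\Vert y\Vert_{W^{2,2}_{\vec\beta}(\Omega)}$. This proves \eqref{eq:interpolation_error}.

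\textbf{The trace estimate.} For \eqref{eq:interpolation_error_Gamma}, take any $y\in W^{2,2}_{\vec\beta}(\Omega)$ with trace $u$. Lagrange interpolation commutes with the trace, $(\mI_hy)\vert_\Gamma=\mI_hu$, because boundary nodal values are exactly the values of $u$. Definition \eqref{E3.1}, with $M_{\tr}=1$, then gives
\[
\Vert u-\mI_hu\Vert_{H^{1/2}(\Gamma)}\le M_{\tr}\Vert y-\mI_hy\Vert_{H^1(\Omega)}\le M_{\tr}c_{\mI}h^s\Vert y\Vert_{W^{2,2}_{\vec\beta}(\Omega)} .
\]
Taking the infimum over all such extensions $y$ and using \eqref{eq::tracenorm} yields the claim.
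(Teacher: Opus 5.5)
Your proposal is correct and follows the paper's route. For \eqref{eq:interpolation_error} the paper only cites \cite[Lemma 4.1]{AMR2024}, noting that its proof just sums local interpolation estimates. Your element-wise argument on graded meshes, weighted on corner elements and split into $h_K^s h_K^{1-s}$ elsewhere, is that omitted local analysis, with $s\le(1-\beta_j)/\mu_j$ entering exactly as you show. Your proof of \eqref{eq:interpolation_error_Gamma} is the same as the paper's: take an extension in $W^{2,2}_{\vec\beta}(\Omega)$, use $(\mI_h y)\vert_\Gamma=\mI_h u$, and pass to the infimum in \eqref{eq::tracenorm}.
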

\begin{proof}
    The estimate \eqref{eq:interpolation_error} is proved in \cite[Lemma 4.1]{AMR2024}. Note that the proof uses, for historical reasons, an intermediate quantity $\vec\lambda>\vec1-\vec\beta$ such that $s<\lambda_j/\mu_j$ for all $j$ is asserted there. The proof adds just local interpolation error estimates and their derivation is completely without $\vec\lambda$ such that the formulation here is more adequate.

    To prove the second estimate we use an extension $\mE u\in W^{2,2}_{\vec\beta}(\Omega)\hookrightarrow H^1(\Omega)$. Observing that $\mI_h u=\tr(\mI_h\mE u)$ and using \eqref{eq:interpolation_error} we obtain
    \begin{align*}
        \Vert u -\mI_h u \Vert_{H^{1/2}(\Gamma)} & \le M_{\tr} \Vert \mE u -\mI_h\mE u \Vert_{H^1(\Omega)} 
        \le  M_{\tr} c_{\mI} h^{s} \Vert \mE u\Vert_{W^{2,2}_{\vec\beta}(\Omega)}.
    \end{align*}
    Taking the infimum among all possible extensions, estimate \eqref{eq:interpolation_error_Gamma} follows from the definition of $\Vert u\Vert_{W^{3/2,2}_{\vec\beta}(\Gamma)}$.
\end{proof}

\subsection{Projection onto $U_h$ in the sense of $H^{1/2}(\Gamma)$} \label{S5.2}
In order to discretize the problem, it is advisable to use some quasi-interpolation operator $\mQ_h \in \mL(H^{1/2}(\Gamma),U_h)$. It is desirable that it satisfies the following properties. First, it should be a projection operator, i.e., $\mQ_h u_h=u_h$ for all $u_h\in U_h$; this is essential to be able to get the final contradiction in the proof of Theorem \ref{L6.1} and to start the proof of Lemma \ref{le::L6.9}. Second, 
we should be able to obtain a high enough order of convergence for error estimates in dual norms for functions in $H^{1/2}(\Gamma)$. 

In \cite{Winkler_NM_2020} and  \cite{GongMateosSinglerZhang2022} the projection $\mP_h$ onto $U_h$ in the sense of $L^2(\Gamma)$ is used. But the global nature of this operator does not allow us to obtain all necessary error estimates in weighted spaces and with graded meshes. The order that we would obtain in Theorem \ref{T5.9} is only $s=0.5$. The order of convergence obtained using a quasi-uniform mesh family in the analogous result \cite[Theorem 1]{Winkler_NM_2020} is $s=1-\max\{\beta_j\}$. The orthogonality properties are satisfied in the wrong space. We need them in $H^{1/2}(\Gamma)$ but they are satisfied only in $L^2(\Gamma)$.
The Scott-Zhang quasi-interpolation operator $\Pi_h$ is a projection operator and its local nature allows to obtain error estimates analogous to that of Corollary \ref{C5.4}, but we do not know of any improved error estimate in dual norms. 
The Carstensen interpolation operator $\mC_h$ also exhibits a local nature that has made it useful to discretize Dirichlet problems in graded meshes; see \cite{ApelNicaisePfefferer:16}. But in general $\mC_h u_h\neq u_h$, so we cannot use this either.

We will use a projection onto $U_h$ in the sense of $H^{1/2}(\Gamma)$. We will show that this projection has the desired properties and it will allow us to obtain the error estimates for the discretization of the problem. Notice that although this projection is not easy to construct from a practical point of view, in practice we will never have to compute it, since, contrary to a pure boundary value problem, the boundary data $u$ are not prescribed but $u_h$ is an unknown.

Consider the bilinear form
\[\mathfrak{p}(u,v) = ( u,v)_\Gamma + (\nabla \mH u,\nabla \mH v)_\Omega.\]
This bilinear form is obviously continuous and, due to Lemma \ref{le::Lemma3.1}, it is also coercive in $H^{1/2}(\Gamma)$: 
There exist constants $0<\Lambda_{\mathfrak{p}}<M_{\mathfrak{p}}$ such that 
\[\Lambda_{\mathfrak{p}} \Vert u\Vert_{H^{1/2}(\Gamma)}^2 \leq  \mathfrak{p}(u,u)\text{ and } \mathfrak{p}(u,v) \leq M_{\mathfrak{p}} \Vert u\Vert_{H^{1/2}(\Gamma)}\Vert v\Vert_{H^{1/2}(\Gamma)}\ \forall u,v\in H^{1/2}(\Gamma).\] The following lemma is an elementary  consequence of this fact.

\begin{lemma}
For every $u\in H^{1/2}(\Gamma)$ there exists a unique $\mQ_h u\in U_h$ such that
\[\mathfrak{p}(\mQ_h u, v_h) = \mathfrak{p}(u,v_h)\ \forall v_h\in U_h.\]
Furthermore, if $u_h\in U_h$, then $\mQ_h u_h = u_h$. It is $\mQ_h\in \mL(H^{1/2}(\Gamma),U_h)$ and, for $M_{\mQ} = M_{\mathfrak{p}}/\Lambda_{\mathfrak{p}}$, 
\begin{equation}\label{eq::MQ}
\Vert \mQ_h u\Vert_{H^{1/2}(\Gamma)}\leq M_{\mQ}\Vert u\Vert_{H^{1/2}(\Gamma)}\ \forall u\in H^{1/2}(\Gamma),
\end{equation}
and
\begin{equation}\label{eq::LA1}
\Vert u-\mQ_h u\Vert_{H^{1/2}(\Gamma)} \leq M_{\mQ} \inf_{u_h\in U_h}\Vert u-u_h\Vert_{H^{1/2}(\Gamma)}\ \forall  u\in H^{1/2}(\Gamma).\end{equation}
\end{lemma}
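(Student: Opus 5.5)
This is the Lax--Milgram/Céa argument applied to the bilinear form $\mathfrak{p}$ restricted to the finite-dimensional subspace $U_h\subset H^{1/2}(\Gamma)$.

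\emph{Existence and uniqueness.} Since $U_h$ is a closed (finite-dimensional) subspace of $H^{1/2}(\Gamma)$, $\mathfrak{p}$ restricted to $U_h\times U_h$ is continuous and coercive with the same constants $\Lambda_{\mathfrak{p}}$ and $M_{\mathfrak{p}}$. For fixed $u\in H^{1/2}(\Gamma)$ the map $v_h\mapsto\mathfrak{p}(u,v_h)$ is a continuous linear functional on $U_h$, with norm at most $M_{\mathfrak{p}}\Vert u\Vert_{H^{1/2}(\Gamma)}$. The Lax--Milgram lemma, or equivalently the invertibility of the Gram matrix, which is positive definite by coercivity, then gives a unique $\mQ_h u\in U_h$ satisfying the defining identity. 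Uniqueness also yields linearity of $\mQ_h$. If $u_h\in U_h$, then $u_h$ itself satisfies the defining identity, so $\mQ_h u_h=u_h$ by uniqueness.

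\emph{Stability \eqref{eq::MQ}.} Test the defining identity with $v_h=\mQ_h u$:
\[
\Lambda_{\mathfrak{p}}\Vert \mQ_h u\Vert_{H^{1/2}(\Gamma)}^2\le \mathfrak{p}(\mQ_h u,\mQ_h u)=\mathfrak{p}(u,\mQ_h u)\le M_{\mathfrak{p}}\Vert u\Vert_{H^{1/2}(\Gamma)}\Vert \mQ_h u\Vert_{H^{1/2}(\Gamma)}.
\]
Dividing by $\Vert \mQ_h u\Vert_{H^{1/2}(\Gamma)}$ (the case $\mQ_h u=0$ is trivial) gives \eqref{eq::MQ}, and hence $\mQ_h\in\mL(H^{1/2}(\Gamma),U_h)$.

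\emph{Quasi-optimality \eqref{eq::LA1}.} This is Céa's lemma. Galerkin orthogonality $\mathfrak{p}(u-\mQ_h u,v_h)=0$ for all $v_h\in U_h$ holds by definition. For any $u_h\in U_h$ we have $\mQ_h u-u_h\in U_h$, so $\mathfrak{p}(u-\mQ_h u,u-\mQ_h u)=\mathfrak{p}(u-\mQ_h u,u-u_h)$. Therefore
\[
\Lambda_{\mathfrak{p}}\Vert u-\mQ_h u\Vert_{H^{1/2}(\Gamma)}^2\le M_{\mathfrak{p}}\Vert u-\mQ_h u\Vert_{H^{1/2}(\Gamma)}\Vert u-u_h\Vert_{H^{1/2}(\Gamma)}.
\]
Dividing and taking the infimum over $u_h\in U_h$ yields \eqref{eq::LA1} with $M_{\mQ}=M_{\mathfrak{p}}/\Lambda_{\mathfrak{p}}$.

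The only step that needs any care is the coercivity of $\mathfrak{p}$, which is already stated before the lemma as a consequence of Lemma \ref{le::Lemma3.1} and the equivalence of $(\Vert u\Vert^2_{L^2(\Gamma)}+\Vert\nabla\mH u\Vert^2_{L^2(\Omega)})^{1/2}$ with the $H^{1/2}(\Gamma)$ norm (Remark \ref{re::R1.2}). With that taken as given, there is no real obstacle.
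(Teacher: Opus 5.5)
Your proposal is correct and follows the paper's approach: the paper gives no separate proof and calls the lemma an elementary consequence of the continuity and coercivity of $\mathfrak{p}$. Your Lax--Milgram existence argument, stability by testing with $\mQ_h u$, and C\'ea argument via Galerkin orthogonality fill that in and give exactly the stated constant $M_{\mQ}=M_{\mathfrak{p}}/\Lambda_{\mathfrak{p}}$.
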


Error estimates in the norms $L^2(\Gamma)$ and $H^{1/2}(\Gamma)$ are not needed in this paper, but may be of independent interest; we add them in Appendix \ref{sec:appA}.
To obtain an optimal error estimate in $\left(\Wsj\right)'$, we will need the following regularity result about a dual problem.

\begin{theorem}\label{L5.5}
  For every $g\in H^{-1/2}(\Gamma)$ there exists a unique solution $w_g\in H^{1/2}(\Gamma)$ of the dual problem
  \begin{equation}\label{eq::A.01}\mathfrak{p}(v,w_{g}) = \langle g, v\rangle_{\Gamma}\ \forall v\in H^{1/2}(\Gamma).\end{equation}
  If, furthermore, $g\in \Wsj$, for $\vec\beta$ such that $1-\lambda_{\Delta,j}< \beta_j < 1$ and $\beta_j\geq 0$ for all $j\in\{1,\ldots,m\}$, then $w_g\in W^{3/2,2}_{\vec\beta}(\Gamma)$  and there exists a constant $C_{\mathfrak{p}}>0$ such that
  \begin{equation}\label{eq::A.02}\Vert w_g\Vert_{W^{3/2,2}_{\vec\beta}(\Gamma)} \leq C_{\mathfrak{p}} \sum_{j=1}^m\Vert g\Vert_{W^{1/2,2}_{\vec\beta}(\Gamma_j)}.\end{equation}
\end{theorem}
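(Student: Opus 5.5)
Existence and uniqueness of $w_g$ follow directly from the Lax--Milgram lemma, since $\mathfrak{p}$ is continuous and coercive on $H^{1/2}(\Gamma)$ and $v\mapsto\langle g,v\rangle_\Gamma$ is in $H^{-1/2}(\Gamma)$. We also get $\Lambda_{\mathfrak{p}}\Vert w_g\Vert_{H^{1/2}(\Gamma)}\le \Vert g\Vert_{H^{-1/2}(\Gamma)}$. Since $\Wsj\hookrightarrow L^2(\Gamma)\hookrightarrow H^{-1/2}(\Gamma)$ for $0\le\beta_j<1$ (weighted $L^2$ with weight $r^{\beta_j}$, $\beta_j<1$, embeds into a space dual to $H^{1/2}$ on each side), this is bounded by $c\sum_j\Vert g\Vert_{W^{1/2,2}_{\vec\beta}(\Gamma_j)}$.

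For the regularity I will rewrite \eqref{eq::A.01} as a Neumann problem for $\mH w_g$, mimicking the last step of the proof of Theorem \ref{th::T4.14}. By the definition of the Dirichlet-to-Neumann operator $\mD$, $(\nabla\mH v,\nabla\mH w_g)_\Omega=\langle \mD w_g,v\rangle_\Gamma$. Hence \eqref{eq::A.01} reads $\langle \mD w_g + w_g - g, v\rangle_\Gamma=0$ for all $v$, that is,
\[
\partial_n \mH w_g = g - w_g \quad\text{on }\Gamma .
\]
So $z=\mH w_g$ is the weak solution of the Neumann problem $-\Delta z + z = z$ in $\Omega$, $\partial_n z = g-w_g$ on $\Gamma$. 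The right-hand side $z$ is in $H^1(\Omega)\subset L^2(\Omega)\hookrightarrow L^2_{\vec\beta}(\Omega)$, and the Neumann datum lies in $\prod_j W^{1/2,2}_{\vec\beta}(\Gamma_j)$ provided $w_g$ does. Then \cite[Theorem 3.5(c)]{AMR2024} gives
\[
\Vert \mH w_g\Vert_{W^{2,2}_{\vec\beta}(\Omega)}\le C\Big(\sum_j\Vert g-w_g\Vert_{W^{1/2,2}_{\vec\beta}(\Gamma_j)}+\Vert\mH w_g\Vert_{L^2(\Omega)}\Big),
\]
and the trace-norm definition \eqref{eq::tracenorm} yields $\Vert w_g\Vert_{W^{3/2,2}_{\vec\beta}(\Gamma)}\le\Vert\mH w_g\Vert_{W^{2,2}_{\vec\beta}(\Omega)}$. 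The condition $1-\lambda_{\Delta,j}<\beta_j$ is exactly what that theorem requires for the Laplacian.

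The main obstacle is controlling $\Vert w_g\Vert_{W^{1/2,2}_{\vec\beta}(\Gamma_j)}$ by $\Vert w_g\Vert_{H^{1/2}(\Gamma)}$, so that the Neumann datum is admissible and its norm bounded. I would argue as follows. For $\beta_j\ge0$, the restriction of an $H^{1/2}(\Gamma)$ function to a side $\Gamma_j$ lies in $H^{1/2}(\Gamma_j)$. Moreover, $H^{1/2}(\Gamma_j)=W^{1/2,2}_0(\Gamma_j)\hookrightarrow W^{1/2,2}_{\beta_j}(\Gamma_j)$, because multiplying by the bounded weight $r^{\beta_j}$ with $\beta_j\ge0$ is harmless at the level of the defining extensions: take an $H^1(\Omega)$ extension, which lies in $W^{1,2}_{\vec\beta}(\Omega)$ since $r_j^{\beta_j}\le 1$. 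Hence
\[
\sum_j\Vert w_g\Vert_{W^{1/2,2}_{\vec\beta}(\Gamma_j)}\le c\Vert w_g\Vert_{H^{1/2}(\Gamma)}\le c\sum_j\Vert g\Vert_{W^{1/2,2}_{\vec\beta}(\Gamma_j)}.
\]
If one is worried about the precise definition of the sidewise weighted trace spaces, one can instead bootstrap. First, $\partial_n\mH w_g\in L^2$-type data gives some intermediate $W^{2,2}_{\vec\gamma}$ regularity with larger weights. This improves $w_g$, and iterating reaches $\vec\beta$.

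Finally, I combine the three bounds. $\Vert\mH w_g\Vert_{L^2(\Omega)}\le M_{\mH}\Vert w_g\Vert_{H^{1/2}(\Gamma)}$, and the previous step controls $\Vert g-w_g\Vert$. Together with the Lax--Milgram estimate, this gives \eqref{eq::A.02}.
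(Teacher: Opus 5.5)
Your proof is correct and follows the paper's route. Both arguments read \eqref{eq::A.01} as a boundary value problem for $\mH w_g$, invoke \cite[Theorem 3.5(c)]{AMR2024}, and finish with the trace-norm definition \eqref{eq::tracenorm}.

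The only real difference is how the term $(v,w_g)_\Gamma$ is handled. The paper keeps it on the left and obtains the Robin problem
$(\nabla z,\nabla\mH w_g)_\Omega+(z_{\vert\Gamma},w_g)_\Gamma=(g,z_{\vert\Gamma})_\Gamma$ for all $z\in H^1(\Omega)$. It applies the cited theorem to this problem directly, which gives \eqref{eq::A.02} in one step with no lower-order term.

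You move $w_g$ to the right as part of the Neumann datum $g-w_g$.
\begin{itemize}
\item The gain is that you only need the Neumann form of the regularity result, exactly as it is used in the proof of Theorem \ref{th::T4.14}.
\item The cost is the embedding $H^{1/2}(\Gamma)\hookrightarrow\Wsj$ together with the Lax--Milgram bound, which you use to absorb $w_g$ and $\Vert\mH w_g\Vert_{L^2(\Omega)}$.
\end{itemize}
Your proof of that embedding, via $r_j^{\beta_j}\le 1$ and $H^1(\Omega)\hookrightarrow W^{1,2}_{\vec\beta}(\Omega)$, is fine; the paper uses the same embedding in Lemma \ref{T5.6XXX}. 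The bootstrap alternative is therefore unnecessary.

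One intermediate claim is false, although the conclusion you draw from it is true. The space $\Wsj$ is not contained in $L^2(\Gamma)$ once some $\beta_j>1/2$. For example, take a cut-off of $r_j^{-a}$ with $1/2\le a<\beta_j$. It is the trace of a $W^{1,2}_{\vec\beta}(\Omega)$ function, since $r_j^{\beta_j}\nabla r_j^{-a}\in L^2$ exactly when $a<\beta_j$. But it does not belong to $L^2(\Gamma_j)$. Such $\beta_j$ are allowed by the statement, for instance at every convex corner.

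What you actually need is only $\Wsj\hookrightarrow H^{-1/2}(\Gamma)$, which the paper also takes for granted, and this does hold:
\begin{itemize}
\item For $\beta_j=0$ there is nothing to prove.
\item For $0<\beta_j<1$ the norm controls $\Vert r_j^{\beta_j-1/2}g\Vert_{L^2(\Gamma_j)}$.
\item Hölder's inequality together with $H^{1/2}(\Gamma)\hookrightarrow L^p(\Gamma)$ for all $p<\infty$ gives $\Vert r_j^{1/2-\beta_j}v\Vert_{L^2(\Gamma_j)}\le c\Vert v\Vert_{H^{1/2}(\Gamma)}$, because $r_j^{1/2-\beta_j}\in L^{s}$ for some $s>2$ when $\beta_j<1$.
\end{itemize}
Replacing your $L^2(\Gamma)$ step with this argument makes the proof complete.
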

\begin{proof}
  Since $\mathfrak{p}(\cdot,\cdot)$ is a scalar product in $H^{1/2}(\Gamma)$, from Riesz's representation theorem or Lax-Milgram's theorem, we know that there exists a unique $w_{g}\in H^{1/2}(\Gamma)$ solution of \eqref{eq::A.01}.
  
  If $g\in \Wsj \hookrightarrow H^{-1/2}(\Gamma)$, then, using the definition of $\mathfrak{p}(v,w_g)$, we have that $\mH w_g$ satisfies
  \[(\nabla \mH v,\nabla \mH w_g)_\Omega +(v,w_g)_\Gamma= (g, v)_\Gamma\ \forall v\in H^{1/2}(\Gamma).\]
  Consider $z\in H^1(\Omega)$, denote $v=z_{\vert \Gamma}$ and define $\eta = z-\mH v\in H^1_0(\Omega)$. Since $(\nabla z,\nabla\mH w_g)_\Omega  = (\nabla \eta,\nabla\mH w_g)_\Omega  +(\nabla \mH v,\nabla\mH w_g)_\Omega = (\nabla \mH v,\nabla\mH w_g)_\Omega$, we have that $\mH w_g$ solves the Robin problem
  \[(\nabla z,\nabla \mH w_g)_\Omega +(z_{\vert \Gamma},w_g)_\Gamma= (g, z_{\vert \Gamma})_\Gamma\ \forall z\in H^{1}(\Omega).\] 
  We can hence apply the regularity result in \cite[Theorem 3.5(c)]{AMR2024} and we have that $\mH w_g\in W^{2,2}_{\vec{\beta}}(\Omega)$ and there exists a constant $\tilde C>0$ such that
  $\Vert \mH w_g\Vert_{W^{2,2}_{\vec{\beta}}(\Omega)} \leq \tilde C \sum_{j=1}^m\Vert g\Vert_{W^{1/2,2}_{\vec\beta}(\Gamma_j)}$ and the result follows from the definition of trace norm.
\end{proof}
\begin{theorem}\label{T5.9}
   Let  $u$ belong to $H^{1/2}(\Gamma)$ and consider  $s\leq 1$, $s<\lambda_{\Delta,j}/\mu_j$ for all $j\in\{1,\ldots,m\}$. Then there exists a constant $\tilde c >0$, that may depend on $\vec\mu$ but is independent of $u$ and $h$, such that
  \[\Vert u-\mQ_h u\Vert_{\left(\Wsj\right)'} \leq \tilde c h^{s} \Vert u-\mQ_h u\Vert_{H^{1/2}(\Gamma)},\]
  for any $\vec\beta$ such that $\beta_j\in[0,1)$ and $s \leq (1-\beta_j)/\mu_j <\lambda_{\Delta,j}/\mu_j$ for all $j\in\{1,\ldots,m\}$.
\end{theorem}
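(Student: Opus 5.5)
This is an Aubin--Nitsche duality argument built on the dual problem of Theorem \ref{L5.5} and the Galerkin orthogonality of $\mQ_h$. Write $e=u-\mQ_h u$. By definition of the dual norm,
\[\Vert e\Vert_{\left(\Wsj\right)'} = \sup_{0\neq g\in \Wsj}\frac{\langle e,g\rangle_\Gamma}{\sum_{j=1}^m\Vert g\Vert_{W^{1/2,2}_{\vec\beta}(\Gamma_j)}},\]
where the pairing is the $L^2(\Gamma)$ integral, which makes sense since $e\in H^{1/2}(\Gamma)\subset L^2(\Gamma)$. Each such $g$ lies in $H^{-1/2}(\Gamma)$ through the embedding used in Theorem \ref{L5.5}.

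Fix such a $g$ and let $w_g\in H^{1/2}(\Gamma)$ solve \eqref{eq::A.01}. Taking $v=e$ gives $\langle g,e\rangle_\Gamma=\mathfrak{p}(e,w_g)$. The defining relation of $\mQ_h$ gives Galerkin orthogonality, $\mathfrak{p}(e,v_h)=0$ for all $v_h\in U_h$. Hence, for every $v_h\in U_h$,
\[\langle g,e\rangle_\Gamma=\mathfrak{p}(e,w_g-v_h)\le M_{\mathfrak{p}}\Vert e\Vert_{H^{1/2}(\Gamma)}\Vert w_g-v_h\Vert_{H^{1/2}(\Gamma)}.\]

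Next choose $v_h=\mI_h w_g$. This is allowed because Theorem \ref{L5.5} gives $w_g\in W^{3/2,2}_{\vec\beta}(\Gamma)$ for the admissible $\vec\beta$ (namely $\beta_j\ge0$ and $1-\lambda_{\Delta,j}<\beta_j<1$). That function space embeds into continuous functions on the boundary, since its extensions lie in $W^{2,2}_{\vec\beta}(\Omega)\hookrightarrow C(\bar\Omega)$ for $\beta_j<1$; so the Lagrange interpolant is well defined. The hypothesis $s\le(1-\beta_j)/\mu_j$ together with $s\le1$ is exactly condition \eqref{eq:sprime}. Therefore \eqref{eq:interpolation_error_Gamma} and \eqref{eq::A.02} yield
\[\Vert w_g-\mI_h w_g\Vert_{H^{1/2}(\Gamma)}\le M_{\tr}c_{\mI}h^{s}\Vert w_g\Vert_{W^{3/2,2}_{\vec\beta}(\Gamma)}\le M_{\tr}c_{\mI}C_{\mathfrak{p}}h^s\sum_{j=1}^m\Vert g\Vert_{W^{1/2,2}_{\vec\beta}(\Gamma_j)}.\]
Dividing by the norm of $g$ and taking the supremum gives the claim with $\tilde c=M_{\mathfrak{p}}M_{\tr}c_{\mI}C_{\mathfrak{p}}$.

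The step needing most care is checking that the admissible $\vec\beta$ satisfies both requirements at once. The regularity hypothesis of Theorem \ref{L5.5} requires $\beta_j>1-\lambda_{\Delta,j}$, which is equivalent to $(1-\beta_j)/\mu_j<\lambda_{\Delta,j}/\mu_j$, exactly as stated. The interpolation condition $s\le(1-\beta_j)/\mu_j$ is also assumed. Such $\beta_j\in[0,1)$ exist precisely because $s<\lambda_{\Delta,j}/\mu_j$ and $s\le1$. The remaining points are the continuity needed for $\mI_h$ and identifying $\langle g,e\rangle_\Gamma$ with the duality pairing in \eqref{eq::A.01}; both are routine.
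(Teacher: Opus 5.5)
Your proposal is correct and follows essentially the same Aubin--Nitsche duality argument as the paper: the dual problem from Theorem \ref{L5.5}, Galerkin orthogonality of $\mQ_h$ with $v_h=\mI_h w_g$, and then \eqref{eq:interpolation_error_Gamma} combined with \eqref{eq::A.02}. Your constant carries an extra factor $M_{\tr}$, but under the norm convention \eqref{E3.1} this factor equals $1$, so your $\tilde c$ coincides with the paper's.
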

\begin{proof}
The vector $\vec\beta$ satisfies the assumptions of Theorem \ref{L5.5}. Hence, for any $g\in \Wsj$, there exists a unique solution $w_g\in W^{3/2,2}_{\vec\beta}(\Gamma)$ of $\mathfrak{p}(v,w_{g})=(g,v)_\Gamma$ for all $v\in H^{1/2}(\Gamma)$. For the sake of notation, denote $G=\{g\in \Wsj:\ \sum_{j=1}^m\Vert g\Vert_{W^{1/2,2}_{\vec\beta}(\Gamma_j)} = 1\}$.
  
  Using the definition of $w_g$, the definition of $\mQ_h u$, the continuity of the bilinear form $\mathfrak{p}(\cdot,\cdot)$, the interpolation error estimate \eqref{eq:interpolation_error_Gamma}, and Theorem \ref{L5.5}, we obtain
  \begin{align*}
    \Vert &u-\mQ_h u\Vert_{\left(\Wsj\right)'}  = \sup_{g\in G} (u-\mQ_h u,g)_\Gamma  = \sup_{g\in G} \mathfrak{p}(u-\mQ_h u,w_g)\\
    & =  \sup_{g\in G} \mathfrak{p}(u-\mQ_h u,w_g-\mI_h w_g) \leq M_{\mathfrak{p}}\sup_{g\in G} \Vert u-\mQ_h u\Vert_{H^{1/2}(\Gamma)}  \Vert w_g-\mI_h w_g\Vert_{H^{1/2}(\Gamma)} \\
    & \leq M_{\mathfrak{p}} c_{\mI}  h^s \Vert u-\mQ_h u\Vert_{H^{1/2}(\Gamma)}  \sup_{g\in G} \Vert w_g\Vert_{W^{3/2,2}_{\vec\beta}(\Gamma)} \\
    & \leq
     M_{\mathfrak{p}} c_{\mI} C_{\mathfrak{p}} h^s \Vert u-\mQ_h u\Vert_{H^{1/2}(\Gamma)}  \sup_{g\in G} \sum_{j=1}^m\Vert g\Vert_{W^{1/2,2}_{\vec\beta}(\Gamma_j)} \\
    & =      M_{\mathfrak{p}} c_{\mI} C_{\mathfrak{p}} h^s  \Vert u-\mQ_h u\Vert_{H^{1/2}(\Gamma)},
  \end{align*}
and  the result follows for $\tilde c = M_{\mathfrak{p}} c_{\mI} C_{\mathfrak{p}}$.
\end{proof}

\subsection{The discrete harmonic extension}\label{S5.3}$ $

\begin{definition}
We will say that $\mE_h$ is a discrete extension operator if $\mE_h\in\mL(H^{1/2}(\Gamma),H^1(\Omega))$ is such that $\mE_h u\in Y_h$ and $(\mE_h u)_{\vert\Gamma} = \mQ_h u$ for all $u\in H^{1/2}(\Gamma)$. 
\end{definition}

\medskip
Notice that $\mE_h u_h=u_h$ on $\Gamma$ for all $u_h\in U_h$ and $\mE_h u = \mE_h \mQ_h u$ for all $u\in H^{1/2}(\Gamma)$.
\begin{remark}\label{re::R5.9}
    In \cite[eq. (5.4)]{GongMateosSinglerZhang2022} a discrete extension operator using the $L^2(\Gamma)$-projection is proposed.
\end{remark}

\medskip
For $u\in H^{1/2}(\Gamma)$, the discrete harmonic extension $\mH_h u\in Y_h$ is the unique solution of 
\begin{equation}
    \label{eq::discHarmext}
    (\nabla \mH_h u,\nabla \zeta_h)_\Omega = 0\quad \forall \zeta_h\in Y_{0,h},\qquad \mH_h u = \mQ_h u\text{ on }\Gamma.
\end{equation}

For any discrete extension operator $\mE_h$, we will denote $M_{\mE_h} = \Vert \mE_h\Vert_ {\mL(H^{1/2}(\Gamma),H^1(\Omega))}$. 
Notice that in general $M_{\mE_h}$ may depend on $h$, but $M_{\mH_h}$ is bounded independently of $h$: 
\begin{lemma}
    \label{Le::contHh}There exists a constant $M_{\mH}^\star>0$, that may depend on $\vec\mu$ but is independent of $h$, such that
    \begin{equation}\label{eq::discrHarmCont}
        \Vert \mH_h u\Vert_{H^1(\Omega)}\leq M_{\mH}^\star \Vert u\Vert_{H^{1/2}(\Gamma)}\ \forall u\in H^{1/2}(\Gamma).
    \end{equation}
\end{lemma}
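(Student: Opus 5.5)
We want to bound $\Vert \mH_h u\Vert_{H^1(\Omega)}$ uniformly in $h$. The natural approach is to write $\mH_h u = \mE_h u + \eta_h$, where $\mE_h$ is \emph{some} discrete extension operator whose norm $M_{\mE_h}$ is bounded independently of $h$, and $\eta_h\in Y_{0,h}$. The key observation is that $\mH_h u$ is the minimizer of $\Vert\nabla z_h\Vert_{L^2(\Omega)}$ among all $z_h\in Y_h$ with trace $\mQ_h u$. Indeed, \eqref{eq::discHarmext} is exactly the Galerkin orthogonality characterizing that minimizer. Hence
\[
\Vert \nabla\mH_h u\Vert_{L^2(\Omega)} \le \Vert \nabla \mE_h u\Vert_{L^2(\Omega)} \le M_{\mE_h}\Vert u\Vert_{H^{1/2}(\Gamma)}.
\]

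To control the full $H^1$ norm, I will use the decomposition directly instead. Write $\eta_h=\mH_h u-\mE_h u\in Y_{0,h}$. It satisfies $(\nabla\eta_h,\nabla\zeta_h)=-(\nabla\mE_h u,\nabla\zeta_h)$ for all $\zeta_h\in Y_{0,h}$, so $\Vert\nabla\eta_h\Vert\le\Vert\nabla\mE_h u\Vert$. The Poincaré inequality in $H^1_0(\Omega)$ then gives $\Vert\eta_h\Vert_{H^1}\le c\Vert \mE_h u\Vert_{H^1}$. Consequently
\[
\Vert \mH_h u\Vert_{H^1(\Omega)}\le (1+c)M_{\mE_h}\Vert u\Vert_{H^{1/2}(\Gamma)}.
\]
This reduces the lemma to constructing a single uniformly bounded discrete extension operator.

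The construction is the main step. Since $\mE_h u=\mE_h\mQ_h u$ and \eqref{eq::MQ} gives $\Vert\mQ_h u\Vert_{H^{1/2}(\Gamma)}\le M_{\mQ}\Vert u\Vert_{H^{1/2}(\Gamma)}$, it suffices to build an operator $\mathcal{R}_h:U_h\to Y_h$ with $(\mathcal{R}_h u_h)_{\vert\Gamma}=u_h$ and $\Vert\mathcal{R}_h u_h\Vert_{H^1(\Omega)}\le c\Vert u_h\Vert_{H^{1/2}(\Gamma)}$, with $c$ independent of $h$. I would take $\mathcal{R}_h u_h:=\Pi_h\mH u_h$, where $\Pi_h$ is the Scott--Zhang quasi-interpolation operator built with boundary averages taken on boundary edges. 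This operator is $H^1$-stable on shape-regular meshes, and graded meshes in the sense of \cite{ASW1996} are shape regular. It also preserves boundary traces that are already discrete, i.e.\ $(\Pi_h z)_{\vert\Gamma}=z_{\vert\Gamma}$ whenever $z_{\vert\Gamma}\in U_h$. Then
\[
\Vert\mathcal{R}_h u_h\Vert_{H^1}\le c_{SZ}\Vert\mH u_h\Vert_{H^1}\le c_{SZ}M_{\mH}\Vert u_h\Vert_{H^{1/2}(\Gamma)}.
\]
Setting $\mE_h:=\mathcal{R}_h\mQ_h$ gives $M_{\mE_h}\le c_{SZ}M_{\mH}M_{\mQ}$, and hence $M_{\mH}^\star=(1+c)c_{SZ}M_{\mH}M_{\mQ}$.

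The point requiring care is the $h$-independence of the Scott--Zhang stability constant on graded meshes. The local estimates depend only on the shape-regularity constant and not on quasi-uniformity, and this is where a possible dependence on $\vec\mu$ enters the constant. The trace-preservation property also needs to be checked against the chosen boundary edges.
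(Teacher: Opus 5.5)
Your proposal is correct. For the gradient it coincides with the paper's proof: the Scott--Zhang interpolant $\Pi_h\mH\mQ_h u$ (with boundary edges, so the trace $\mQ_h u$ is reproduced) serves as a competitor, Galerkin orthogonality of $\mH_h u$ against $Y_{0,h}$ gives the energy bound, and \eqref{eq::MQ} handles the reduction to $U_h$.

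The genuine difference is the $L^2(\Omega)$ part. The paper uses a duality argument: it solves $-\Delta\psi=\mH_h u_h$ with $\psi=0$ on $\Gamma$, and invokes the weighted regularity of Lemma~\ref{th::T4.3}, the transposition identity \eqref{eq::defHarmExtDual}, and the graded-mesh interpolation estimate \eqref{eq:interpolation_error}. This yields $\Vert\mH_h u_h\Vert_{L^2(\Omega)}\le \Vert\mH u_h\Vert_{L^2(\Omega)}+c\,h^s\Vert u_h\Vert_{H^{1/2}(\Gamma)}$.

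You instead apply the Poincar\'e inequality to $\mH_h u-\mE_h u\in Y_{0,h}\subset H^1_0(\Omega)$, whose gradient is bounded by that of $\mE_h u$. Since the lemma only asks for a bound in terms of $\Vert u\Vert_{H^{1/2}(\Gamma)}$, this is enough, and it is considerably more elementary. It needs no regularity theory and no interpolation estimates on graded meshes. The constant depends on the mesh family only through the shape-regularity constant hidden in the Scott--Zhang stability constant.

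The paper's duality step is inherited from \cite{MayRannacherVexler2013}, where it is needed to bound the $L^2(\Omega)$ norm by $\Vert u_h\Vert_{L^2(\Gamma)}$. That stronger form is not obtained on graded meshes anyway (see the remarks after the lemma). So the extra machinery only buys the asymptotic comparison of $\Vert\mH_h u_h\Vert_{L^2(\Omega)}$ with $\Vert\mH u_h\Vert_{L^2(\Omega)}$, the same duality mechanism the paper reuses in Lemma~\ref{L5.9}.
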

\begin{proof}
Denote $u_h = \mQ_h u\in U_h$. We first prove \eqref{eq::discrHarmCont} for $u_h\in U_h$, and the result will then follow from estimate \eqref{eq::MQ}.

    We will estimate first $\Vert\nabla\mH_h u_h\Vert_{L^2(\Omega)}$. Let us denote $\Pi_h:H^1(\Omega)\to Y_h$ the Scott--Zhang interpolation operator; see \cite{ScottZhang1990}. We have that $\Pi_h \mH u_h\in Y_h$ and that $\Pi_h \mH u_h = u_h$ on $\Gamma$, so $\mH_hu_h-\Pi_h \mH u_h\in Y_{0,h}$. From the general stability result \cite[Theorem 3.1]{ScottZhang1990} we can also deduce the existence of a constant $M_{\Pi}>0$ independent of $h$ such that $\Vert \Pi_h y\Vert_{H^1(\Omega)}\leq M_{\Pi}\Vert y\Vert_{H^1(\Omega)}$ for all $y\in H^1(\Omega)$. Following the argument in the proof of \cite[Lemma 3.2(i)]{MayRannacherVexler2013} we can write
    \begin{align*}
        \Vert\nabla\mH_h u_h\Vert_{L^2(\Omega)}^2 & = (\nabla\mH_h u_h, \nabla(\mH_hu_h-\Pi_h \mH u_h))_\Omega + (\nabla\mH_h u_h, \nabla\Pi_h \mH u_h)_\Omega \\
        & \leq \Vert\nabla\mH_h u_h\Vert_{L^2(\Omega)} \Vert\nabla\Pi_h \mH u_h\Vert_{L^2(\Omega)} \\ & \leq M_{\Pi} M_{\mH} \Vert\nabla\mH_h u_h\Vert_{L^2(\Omega)} \Vert u_h \Vert_{H^{1/2}(\Gamma)}
    \end{align*}
and therefore
\begin{equation}\label{eq::stabHh1}
    \Vert\nabla\mH_h u_h\Vert_{L^2(\Omega)} \leq M_{\Pi} M_{\mH} \Vert u_h \Vert_{H^{1/2}(\Gamma)}
\end{equation}
    To estimate $\Vert\mH_h u_h\Vert_{L^2(\Omega)}$ we deduce from Lemma \ref{th::T4.3} the existence of $\psi\in W^{2,2}_{\vec\beta}(\Omega)\cap H^1_0(\Omega)$, unique solution of
    \[-\Delta\psi = \mH_h u_h\text{ in }\Omega,\ \psi = 0\text{ on }\Gamma,\]
    where $\vec\beta$ satisfies $\beta_j\geq 0$ and $1-\lambda_{\Delta,j} < \beta_j < 1$ for all $j\in\{1,\ldots,m\}$. Using integration by parts, the formula \eqref{eq::defHarmExtDual}, the fact that $\mI_h\psi$ is well defined and belongs to $Y_{0,h}$, the interpolation error estimate \eqref{eq:interpolation_error}, Lemma \ref{th::T4.3}, estimate \eqref{eq::stabHh1}, and the continuity of the continuous harmonic extension, we have
    \begin{align*}
        \Vert & \mH_h u_h\Vert_{L^2(\Omega)}^2 = (\mH_h u_h,-\Delta\psi)_\Omega
         = (\nabla\mH_h u_h,\nabla\psi)_\Omega - (u_h,\partial_n\psi)_\Gamma \\
         & = (\nabla\mH_h u_h,\nabla(\psi-\mI_h\psi ))_\Omega + (\mH u_h,\mH_h u_h)_\Omega\\ & \leq c_{\mathrm{d}} c_{\mI} M_{\Pi}M_{\mH}  h^{s} \Vert u_h\Vert_{H^{1/2}(\Gamma)} \Vert \mH_h u_h\Vert_{L^2(\Omega)} + M_{\mH} \Vert u_h\Vert_{H^{1/2}(\Gamma)} \Vert \mH_h u_h\Vert_{L^2(\Omega)} 
    \end{align*}
     for some $s\leq 1$ such that $s \leq (1-\beta_j)/\mu_j$ for all $j\in\{1,\ldots,m\}$. Therefore
     \begin{equation}\label{eq::boundHhuhL2} \Vert\mH_h u_h\Vert_{L^2(\Omega)} \leq (c_{\mathrm{d}} c_{\mI} M_{\Pi}M_{\mH} \mathrm{diam}(\Omega)^{s} + M_{\mH}) \Vert u_h\Vert_{H^{1/2}(\Gamma)}.
     \end{equation}
Gathering estimates \eqref{eq::boundHhuhL2} and \eqref{eq::stabHh1} with \eqref{eq::MQ}, the  result follows for \[M_{\mH}^\star = M_{\mQ}( c_{\mathrm{d}} c_{\mI} M_{\Pi}M_{\mH} \mathrm{diam}(\Omega)^{s} + M_{\mH} + M_{\Pi}M_{\mH})\]
and the proof is complete.
\end{proof}
\begin{remark}\label{re::R5.10}
    Since $M_{\mH}\leq M_{\mH}^\star$, abusing notation we will rename $M_{\mH}:= M^\star_{\mH}$  and we will use $M_{\mH}$ for both constants.
\end{remark}
\begin{remark}
    Our proof of Lemma \ref{Le::contHh} follows closely that of \cite[Lemma 3.2]{MayRannacherVexler2013}. Notice, nevertheless, that our proof is slightly different since we do not obtain a bound of $\Vert\mH_h u_h\Vert_{L^2(\Omega)}$ in terms of  $\Vert u_h\Vert_{L^{2}(\Gamma)}$, as it is done in \cite[Lemma 3.2(ii)]{MayRannacherVexler2013}. To obtain this estimate, an inverse inequality  is used in this reference but cannot be used with graded meshes. See also \cite[Theorem 3.4.6]{VexlerMeidner2025book}. 
\end{remark}    

\begin{lemma}[C\'ea type argument] There exists a constant $c_{\textsc{cea}}>0$, that may depend on $\vec\mu$ but is independent of $h$, such that
\begin{align}\label{eq::Cea}
    \Vert \nabla(\mH u-\mH_h u)\Vert_{L^2(\Omega)} \le 
     c_{\textsc{cea}}\Vert  \mH u-y_h\Vert_{H^1(\Omega)} \quad\forall u\in H^{1/2}(\Gamma)\text{ and } y_h\in Y_h.
\end{align}
\end{lemma}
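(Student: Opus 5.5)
The plan is to reduce the C\'ea estimate to Galerkin orthogonality plus the boundary mismatch, which is controlled by the $H^{1/2}(\Gamma)$-quasi-optimality \eqref{eq::LA1} of $\mQ_h$. Fix $u\in H^{1/2}(\Gamma)$ and $y_h\in Y_h$, and write $e=\mH u-\mH_h u$. Set $v_h:=y_h|_\Gamma\in U_h$. The difficulty is that $\mH u-\mH_h u$ does not vanish on $\Gamma$: its trace is $u-\mQ_h u$. So we cannot directly test with $Y_{0,h}$ functions.

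The first step is a comparison function with the correct discrete trace. Let $z_h:=\mH_h(u-v_h)+y_h\in Y_h$. Since $\mQ_h$ is a projection and $v_h\in U_h$, its trace is $\mQ_h u-v_h+v_h=\mQ_h u$. Hence $\mH_h u-z_h\in Y_{0,h}$.

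Next we use the discrete Dirichlet principle. By \eqref{eq::discHarmext}, $\mH_h u$ minimizes $\Vert\nabla w_h\Vert_{L^2(\Omega)}$ among $w_h\in Y_h$ with trace $\mQ_h u$. Equivalently, $(\nabla \mH_h u,\nabla(\mH_h u-z_h))_\Omega=0$. The continuous harmonic extension satisfies $(\nabla\mH u,\nabla\zeta)_\Omega=0$ for $\zeta\in Y_{0,h}\subset H^1_0(\Omega)$. Hence
\[
\Vert\nabla(\mH_h u-z_h)\Vert^2_{L^2(\Omega)}=(\nabla(\mH_h u-\mH u),\nabla(\mH_h u-z_h))_\Omega+(\nabla(\mH u-z_h),\nabla(\mH_h u-z_h))_\Omega .
\]
The first term equals $-(\nabla z_h,\nabla(\mH_hu-z_h))_\Omega+(\nabla \mH_h u,\cdots)=$ something that is not obviously signed. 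So instead I will simply use the triangle inequality:
\[
\Vert\nabla e\Vert_{L^2(\Omega)}\le \Vert\nabla(\mH u-y_h)\Vert_{L^2(\Omega)}+\Vert\nabla(y_h-z_h)\Vert_{L^2(\Omega)}+\Vert\nabla(z_h-\mH_h u)\Vert_{L^2(\Omega)} .
\]
Here $y_h-z_h=-\mH_h(u-v_h)$. Lemma \ref{Le::contHh} gives $\Vert\nabla\mH_h(u-v_h)\Vert_{L^2(\Omega)}\le M_{\mH}\Vert u-v_h\Vert_{H^{1/2}(\Gamma)}$. Moreover $\Vert u-v_h\Vert_{H^{1/2}(\Gamma)}=\Vert \tr(\mH u-y_h)\Vert_{H^{1/2}(\Gamma)}\le M_{\tr}\Vert\mH u-y_h\Vert_{H^1(\Omega)}$.

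The third term is the main obstacle. I would handle it by linearity. Note that $\mH_h$ is linear and $\mH_h v_h$ is the discrete harmonic function with trace $v_h$. Thus $\mH_h u=\mH_h(u-v_h)+\mH_h v_h$, and $z_h-\mH_h u=y_h-\mH_h v_h$. Since $\mH_h v_h$ is the energy-orthogonal (in $\nabla$) projection of $y_h$ onto $y_h+Y_{0,h}$, we get
\[
\Vert\nabla(y_h-\mH_hv_h)\Vert_{L^2(\Omega)}=\inf_{\zeta_h\in Y_{0,h}}\Vert \nabla(y_h-\mH_hv_h-\zeta_h)\Vert,
\]
and more usefully, by Pythagoras, $\Vert\nabla(y_h-\mH_h v_h)\Vert\le\Vert\nabla(y_h-w)\Vert$ for any $w$ with $\nabla$-orthogonality to $Y_{0,h}$ and the same trace. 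I will take $w=\mH v_h$; this is not discrete, so instead I will test directly. Set $\zeta_h=y_h-\mH_hv_h\in Y_{0,h}$. Then
\[
\Vert\nabla\zeta_h\Vert^2=(\nabla y_h,\nabla\zeta_h)=(\nabla(y_h-\mH u),\nabla\zeta_h)+(\nabla\mH u,\nabla\zeta_h),
\]
and the last term vanishes by \eqref{eq::Hu}. Hence $\Vert\nabla\zeta_h\Vert\le\Vert\nabla(\mH u-y_h)\Vert$.

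Collecting the three terms gives \eqref{eq::Cea} with $c_{\textsc{cea}}=2+M_{\mH}M_{\tr}$, where $M_{\mH}$ is understood as in Remark \ref{re::R5.10}. The dependence on $\vec\mu$ enters only through $M_{\mH}$.
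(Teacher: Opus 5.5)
Your final argument is correct and is essentially the paper's proof. You use the same comparison function $z_h$ (note $\mH_h(u-v_h)=\mH_h(\mQ_h u-v_h)$), the same bound for $y_h-z_h$ via Lemma~\ref{Le::contHh} and $M_{\tr}$ (applied directly to $u-v_h$, which spares the paper's detour through \eqref{eq::LA1}), and you bound the extra term $z_h-\mH_h u=y_h-\mH_h v_h\in Y_{0,h}$ with the same two orthogonality relations.

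The term you called ``not obviously signed'' actually vanishes: your two relations combine into the Galerkin orthogonality $(\nabla(\mH_h u-\mH u),\nabla\chi_h)_\Omega=0$ for all $\chi_h\in Y_{0,h}$. The paper uses exactly this to get $\Vert\nabla(\mH u-\mH_h u)\Vert_{L^2(\Omega)}\le\Vert\nabla(\mH u-z_h)\Vert_{L^2(\Omega)}$ in one line, which makes your third term unnecessary. Also drop the display with $\inf_{\zeta_h\in Y_{0,h}}$: it is false as written, since its right-hand side is $0$ because $y_h-\mH_h v_h\in Y_{0,h}$.
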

\begin{proof}
    The peculiarity of the proof is that $\mH u\not=\mH_h u$ on $\Gamma$. Nevertheless the Galerkin orthogonality 
\begin{align*}
    (\nabla(\mH u-\mH_h u),\nabla\chi_h)=0 \quad\forall\chi_h\in Y_{0,h}
\end{align*}
holds, and hence with the standard proof the C\'ea lemma
\begin{align*}
    \Vert \nabla(\mH u-\mH_h u)\Vert_{L^2(\Omega)} \le 
    \Vert \nabla(\mH u-z_h)\Vert_{L^2(\Omega)} \quad
    \forall z_h\in Y_h \text{ such that } z_h=\mQ_h u\text{ on }\Gamma.
\end{align*}
For any $y_h\in Y_h$ with $\tr y_h = v_h\in U_h$, we define now $z_h=y_h+\mH_h(\mQ_h u-v_h)$, so that $z_h=\mQ_h u$ on $\Gamma$, and obtain 
\begin{align*}
    \Vert \nabla(\mH u-z_h)\Vert_{L^2(\Omega)} &\le \Vert \nabla(\mH u-y_h)\Vert_{L^2(\Omega)} +
    \Vert \nabla  \mH_h(\mQ_h u-v_h)\Vert_{L^2(\Omega)}
\end{align*}
The second term can be further estimated by using \eqref{eq::discrHarmCont} and C\'{e}a's inequality \eqref{eq::LA1} 
\begin{align*}
    \Vert \nabla & \mH_h(\mQ_h u-v_h)\Vert_{L^2(\Omega)} \le 
    \Vert  \mH_h(\mQ_h u-v_h)\Vert_{H^1(\Omega)} \leq M_{\mH}
    \Vert  \mQ_h u- v_h\Vert_{H^{1/2}(\Gamma)} \\ &\le
    M_{\mH} \big(\Vert   u-\mQ_h u\Vert_{H^{1/2}(\Gamma)} + \Vert   u- v_h\Vert_{H^{1/2}(\Gamma)} \big) \\
    &\leq 
    M_{\mH}(1+M_{\mQ})\Vert   u- v_h\Vert_{H^{1/2}(\Gamma)} \le  M_{\mH}(1+M_{\mQ}) M_{\tr}\Vert  \mH u-y_h\Vert_{H^1(\Omega)}
\end{align*}
such that we obtain \eqref{eq::Cea} for $c_{\textsc{cea}} = 1+M_{\mH}(1+M_{\mQ}) M_{\tr}$.
\end{proof}

\begin{corollary}\label{C5.8} If $u\in W^{3/2,2}_{\vec\beta}(\Gamma)$ for $\vec\beta$ such that $1-\lambda_{\Delta,j}< \beta_j < 1$ and $\beta_j\geq 0$ for all $j\in\{1,\ldots,m\}$, then for all $s\leq 1$ such that $s<\lambda_{\Delta,j}$ for all $j\in\{1,\ldots,m\}$, there exists a constant $C_1>0$, that may depend on $\vec\mu$ but is independent of $u$ and $h$, such that
    \begin{equation}\label{eq::err_discharm}
        \Vert\nabla\mH u -\nabla \mH_h u\Vert_{L^2(\Omega)} \leq C_1 h^{s} \Vert u\Vert_{W^{3/2,2}_{\vec\beta}(\Gamma)}.
    \end{equation}
\end{corollary}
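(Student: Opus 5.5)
The estimate follows by combining the C\'ea type argument \eqref{eq::Cea} with the interpolation estimate of Lemma~\ref{L5.1interpolation} and the regularity of the harmonic extension from Lemma~\ref{th::T4.2}. Let $u\in W^{3/2,2}_{\vec\beta}(\Gamma)$ with $\vec\beta$ as in the statement. Lemma~\ref{th::T4.2} applies because $1-\lambda_{\Delta,j}<\beta_j<1$ and $\beta_j\ge 0$. It gives $\mH u\in W^{2,2}_{\vec\beta}(\Omega)$ with
\[
\Vert \mH u\Vert_{W^{2,2}_{\vec\beta}(\Omega)}\le c_{\eqref{eq::Hureg}}\Vert u\Vert_{W^{3/2,2}_{\vec\beta}(\Gamma)}.
\]
In particular $\mH u\in C(\bar\Omega)$, since $W^{2,2}_{\vec\beta}(\Omega)\hookrightarrow C(\bar\Omega)$ for $\beta_j<1$. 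Hence the Lagrange interpolant $\mI_h\mH u\in Y_h$ is well defined.

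In \eqref{eq::Cea} we choose $y_h=\mI_h\mH u$. This gives
\[
\Vert\nabla(\mH u-\mH_h u)\Vert_{L^2(\Omega)}\le c_{\textsc{cea}}\Vert \mH u-\mI_h\mH u\Vert_{H^1(\Omega)}.
\]
Next we apply \eqref{eq:interpolation_error} with the same $\vec\beta$. This requires the exponent $s$ to satisfy \eqref{eq:sprime}, i.e.\ $s\le1$ and $s\le(1-\beta_j)/\mu_j$ for all $j$. Under that condition we obtain
\[
\Vert\nabla(\mH u-\mH_h u)\Vert_{L^2(\Omega)}\le c_{\textsc{cea}}c_{\mI}c_{\eqref{eq::Hureg}}h^s\Vert u\Vert_{W^{3/2,2}_{\vec\beta}(\Gamma)},
\]
which is \eqref{eq::err_discharm} with $C_1=c_{\textsc{cea}}c_{\mI}c_{\eqref{eq::Hureg}}$.

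The only delicate point is matching the admissible exponents. The statement phrases the condition on $s$ through $\lambda_{\Delta,j}$ (presumably $s<\lambda_{\Delta,j}/\mu_j$, as in Theorem~\ref{T5.9}), whereas the interpolation lemma needs $s\le (1-\beta_j)/\mu_j$. Since $\vec\beta$ is free in $(1-\lambda_{\Delta,j},1)\cap[0,1)$, I would reduce to the smallest admissible weight. Given $s$ with $s<\lambda_{\Delta,j}/\mu_j$, pick $\beta_j'\in(1-\lambda_{\Delta,j},1)$, $\beta'_j\ge0$, with $\beta'_j\le 1-s\mu_j$. This is possible because $1-s\mu_j>1-\lambda_{\Delta,j}$ and $1-s\mu_j\ge0$ (as $s\le1$ and $\mu_j\le1$).

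If $\beta'_j\ge\beta_j$, the embedding $W^{3/2,2}_{\vec\beta}(\Gamma)\hookrightarrow W^{3/2,2}_{\vec\beta'}(\Gamma)$ controls the norm with weight $\vec\beta'$ by the given one, with a constant depending only on $\Omega$. Otherwise I would read the corollary with $\vec\beta$ understood to satisfy $s\le(1-\beta_j)/\mu_j$, as in Theorem~\ref{T5.9}, and then $\vec\beta$ can be used directly. In either case the constant depends on $\vec\mu$ through $c_{\mI}$ but not on $h$ or $u$.
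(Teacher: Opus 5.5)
Your proof is correct and is exactly the paper's argument: choose $y_h=\mI_h\mH u$ in the C\'ea-type estimate \eqref{eq::Cea}, apply \eqref{eq:interpolation_error}, and bound $\Vert\mH u\Vert_{W^{2,2}_{\vec\beta}(\Omega)}$ by Lemma~\ref{th::T4.2}, giving $C_1=c_{\textsc{cea}}c_{\mI}c_{\eqref{eq::Hureg}}$. Your remark on the exponents is apt---the paper's proof tacitly requires $s\le(1-\beta_j)/\mu_j$, which is how the corollary is invoked in Lemma~\ref{le::L6.9}---though your case $\beta'_j\ge\beta_j$ is redundant, since there $\vec\beta$ itself already satisfies this condition.
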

\begin{proof}
    Using $y_h=\mI_h\mH\bar u$ in \eqref{eq::Cea}, the interpolation error estimate \eqref{eq:interpolation_error}, and Theorem \ref{th::T4.2}, we see that
\begin{align*}
\Vert\nabla\mH u -\nabla \mH_h u\Vert_{L^2(\Omega)} & \leq
    c_{\textsc{cea}} \Vert \mH\bar u-\mI_h\mH\bar u \Vert_{H^1(\Omega)} \le c_{\textsc{cea}} c_{\mI} h^{s} \Vert\mH\bar u\Vert_{W^{2,2}_{\vec\beta}(\Omega)} \\
    & \leq c_{\textsc{cea}} c_{\mI} c_{\eqref{eq::Hureg}} h^{s} \Vert\bar u\Vert_{W^{3/2,2}_{\vec\beta}(\Gamma)},
\end{align*}
and the result follows for $C_1 = c_{\textsc{cea}} c_{\mI} c_{\eqref{eq::Hureg}}$.
\end{proof}

\begin{lemma}\label{L5.9} 
 Let  $u$ belong to $H^{1/2}(\Gamma)$ and consider  $s\leq 1$ such that $s<\lambda_{\Delta,j}/\mu_j$ for all $j\in\{1,\ldots,m\}$. Then, there exists a constant $C_0 >0$, that may depend on $\vec\mu$ but is independent of $u$ and $h$, such that
      \begin{equation}\label{eq::err:L2Harm}
        \Vert\mH u -\mH_h u\Vert_{L^2(\Omega)} \leq C_0 h^{s} \Vert u\Vert_{H^{1/2}(\Gamma)}.
    \end{equation}
\end{lemma}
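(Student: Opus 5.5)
The plan is a duality (Aubin--Nitsche) argument. For $g\in L^2(\Omega)$ I take $\psi_g\in H^1_0(\Omega)$ from Lemma \ref{th::T4.3}, with $\vec\beta$ chosen so that $\beta_j\ge0$, $1-\lambda_{\Delta,j}<\beta_j<1$ and $s\le(1-\beta_j)/\mu_j$; such a choice is possible precisely because $s<\lambda_{\Delta,j}/\mu_j$. I write $e=\mH u-\mH_h u\in H^1(\Omega)$ and integrate by parts:
\[
(e,g)_\Omega=(\nabla e,\nabla\psi_g)_\Omega-\langle u-\mQ_h u,\partial_n\psi_g\rangle_\Gamma .
\]
The boundary term appears because $e=u-\mQ_h u$ on $\Gamma$. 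Since $\partial_n\psi_g$ lies only in $\prod_j W^{1/2,2}_{\vec\beta}(\Gamma_j)$, I would justify this identity through the transposition formula \eqref{eq::defHarmExtDual} and Theorem \ref{th::T4.4}, exactly as in the proof of Lemma \ref{Le::contHh}. That proof used the same identity for the discrete function $\mH_h u_h$.

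For the volume term, Galerkin orthogonality $(\nabla e,\nabla\chi_h)=0$ for all $\chi_h\in Y_{0,h}$ lets me subtract $\mI_h\psi_g\in Y_{0,h}$. This gives
\[
\Vert\nabla e\Vert_{L^2(\Omega)}\,\Vert\psi_g-\mI_h\psi_g\Vert_{H^1(\Omega)}\le \Vert\nabla e\Vert_{L^2(\Omega)}\,c_{\mI}h^s c_{\mathrm d}\Vert g\Vert_{L^2(\Omega)} .
\]
I bound $\Vert\nabla e\Vert_{L^2(\Omega)}\le (M_{\mH}+M^\star_{\mH})\Vert u\Vert_{H^{1/2}(\Gamma)}$ using Lemma \ref{Le::contHh}. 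I deliberately do not use Corollary \ref{C5.8}, because $u$ is only in $H^{1/2}(\Gamma)$.

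For the boundary term I use Theorem \ref{T5.9}. It bounds $\Vert u-\mQ_h u\Vert_{(\Wsj)'}$ by $\tilde c h^s\Vert u-\mQ_h u\Vert_{H^{1/2}(\Gamma)}$, and this is in turn at most $\tilde c(1+M_{\mQ})h^s\Vert u\Vert_{H^{1/2}(\Gamma)}$ by \eqref{eq::MQ}. The remaining factor satisfies $\sum_j\Vert\partial_n\psi_g\Vert_{W^{1/2,2}_{\vec\beta}(\Gamma_j)}\le c_{\mathrm d}\Vert g\Vert_{L^2(\Omega)}$ by Lemma \ref{th::T4.3}. Taking the supremum over $\Vert g\Vert_{L^2(\Omega)}=1$ then yields \eqref{eq::err:L2Harm}, with $C_0$ collecting these constants.

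I expect the main obstacle to be the rigorous justification of the Green formula with the weak normal derivative. The trace $u-\mQ_h u$ is only in $H^{1/2}(\Gamma)$, and the pairing must be read in the $(\Wsj)'\times\Wsj$ duality. I would handle it by splitting $e=\mH(u-\mQ_h u)+(\mH\mQ_h u-\mH_h u)$. For the first piece, \eqref{eq::defHarmExtDual} gives $(\mH(u-\mQ_h u),g)=-\langle u-\mQ_h u,\partial_n\psi_g\rangle$ directly. The second piece lies in $H^1_0(\Omega)$, so for it the ordinary integration by parts against $-\Delta\psi_g=g$ holds. Recombining the gradients uses $(\nabla\mH(u-\mQ_h u),\nabla\psi_g)=0$, which holds because $\psi_g\in H^1_0(\Omega)$.
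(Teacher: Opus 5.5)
Your proposal is correct and takes essentially the paper's route. The paper splits $\mH u-\mH_h u=(\mH u-\mH\mQ_h u)+(\mH\mQ_h u-\mH_h u)$ and handles the pieces separately. It bounds the first piece by Theorem \ref{th::T4.4} together with Theorem \ref{T5.9}, and the second piece, which lies in $H^1_0(\Omega)$, by an Aubin--Nitsche argument using Galerkin orthogonality, interpolation of $\psi$, and Lemma \ref{th::T4.3}. Your single Green identity merges these two estimates, and your justification of it through the same splitting is exactly right; the only cosmetic difference is that you bound $\Vert\nabla(\mH u-\mH_h u)\Vert_{L^2(\Omega)}$ rather than $\Vert\nabla(\mH\mQ_h u-\mH_h u)\Vert_{L^2(\Omega)}$, which only changes the constant.
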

\begin{proof}
By the triangle inequality, we have that
\[
\Vert\mH u -\mH_h u\Vert_{L^2(\Omega)} \leq \Vert\mH u -\mH \mQ_h u\Vert_{L^2(\Omega)}+\Vert\mH \mQ_h u -\mH_h u\Vert_{L^2(\Omega)}.
\]
  Take $\vec\beta$ such that $\beta_j\in[0,1)$ and $s <(1-\beta_j)/\mu_j <\lambda_{\Delta,j}/\mu_j$ for all $j\in\{1,\ldots,m\}$.

On one hand, from Theorems \ref{th::T4.4} and \ref{T5.9}, we have that
\begin{align*}
\Vert\mH u -\mH \mQ_h u\Vert_{L^2(\Omega)} & 
 \leq  M'_{\mH} \Vert u-\mQ_h u\Vert_{\left(\Wsj\right)'}
 \\
& \leq M'_{\mH}  \tilde c h^{s} \Vert u-\mQ_h u\Vert_{H^{1/2}(\Gamma)} \leq  M'_{\mH} \tilde c  (1+M_{\mQ}) h^{s} \Vert u\Vert_{H^{1/2}(\Gamma)}.
\end{align*}

    To estimate the second term we use a duality argument. From Lemma \ref{th::T4.3} that there exists a unique $\psi\in H^1_0(\Omega)\cap W^{2,2}_{\vec\beta}(\Omega)$ such that 
\[
(\nabla\psi,\nabla\zeta)_\Omega=(\mH \mQ_hu -\mH_h u,\zeta)_\Omega \text{ for all  }\zeta\in H^1_0(\Omega).
\]
Noting that $\mH \mQ_h u -\mH_h u\in H^1_0(\Omega)$, that $\mI_h\psi\in Y_{0,h}\subset H^1_0(\Omega)$ together with the definition of harmonic and discrete harmonic extension, the continuity properties of these operators, the interpolation error estimate \eqref{eq:interpolation_error}, and Lemma \ref{th::T4.3}, we obtain
\begin{align*}
    \Vert \mH \mQ_hu -\mH_h u\Vert_{L^2(\Omega)}^2&=
 (\nabla(\mH \mQ_h u -\mH_h u),\nabla \psi)= 
    (\nabla(\mH \mQ_h u -\mH_h u),\nabla (\psi-\mI_h \psi))\\&\le
  \Vert\nabla(\mH \mQ_h u -\mH_h u)\Vert_{L^2(\Omega)}
    \Vert\nabla (\psi-\mI_h \psi)\Vert_{L^2(\Omega)} \\&\le
    2 c_{\mI} M_{\mH} M_{\mQ} h^{s}\Vert u \Vert_{H^{1/2}(\Omega)} \Vert \psi\Vert_{W^{2,2}_{\vec\beta}(\Omega)} \\
    &\leq
    2 c_{\mI} M_{\mH} M_{\mQ} c_{\mathrm{d}} h^{s}\Vert u \Vert_{H^{1/2}(\Omega)} \Vert \mH \mQ_hu -\mH_h u\Vert_{L^2(\Omega)},
\end{align*}
and the result follows for $C_0 = M'_{\mH} \tilde c  (1+M_{\mQ}) + 2 c_{\mI} M_{\mH} M_{\mQ} c_{\mathrm{d}}$.
\end{proof}

\medskip
Besides the error estimates, we will also need convergence results for the low regularity case to prove the key result Theorem \ref{L6.1}.
\begin{lemma}\label{co::conv::discharm}For every $u\in H^{1/2}(\Gamma)$,
$
  \mH_h u\to \mH u\text{ strongly in }H^1(\Omega)
$.
\end{lemma}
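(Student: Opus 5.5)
This is a density argument: uniformly bounded linear operators that converge on a dense set converge everywhere. The uniform bound comes from Lemma \ref{Le::contHh}, and convergence on the dense set comes from Corollary \ref{C5.8} together with Lemma \ref{L5.9}.

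\emph{Step 1: the error operators are uniformly bounded.} Define $\mathcal{R}_h = \mH - \mH_h \in \mL(H^{1/2}(\Gamma),H^1(\Omega))$. By Lemma \ref{Le::contHh} and Remark \ref{re::R5.10},
\[
\Vert \mathcal{R}_h u\Vert_{H^1(\Omega)}\le 2M_{\mH}\Vert u\Vert_{H^{1/2}(\Gamma)}
\]
for all $u\in H^{1/2}(\Gamma)$, uniformly in $h$.

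\emph{Step 2: convergence on a dense subset.} Fix any admissible $\vec\beta$, say $\beta_j\in[0,1)$ with $1-\lambda_{\Delta,j}<\beta_j$. Let $D := W^{3/2,2}_{\vec\beta}(\Gamma)$; it contains $H^{3/2}(\Gamma)$, or the traces of $C^\infty(\bar\Omega)$ functions. For $u\in D$, Corollary \ref{C5.8} gives
\[
\Vert\nabla(\mH u-\mH_h u)\Vert_{L^2(\Omega)}\le C_1h^{s}\Vert u\Vert_{W^{3/2,2}_{\vec\beta}(\Gamma)}
\]
for some $s>0$. Lemma \ref{L5.9} gives $\Vert\mH u-\mH_h u\Vert_{L^2(\Omega)}\le C_0h^{s}\Vert u\Vert_{H^{1/2}(\Gamma)}$. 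Hence $\mathcal{R}_h u\to0$ in $H^1(\Omega)$ for every $u\in D$.

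\emph{Step 3: density.} $D$ is dense in $H^{1/2}(\Gamma)$. Indeed, $C^\infty(\bar\Omega)$ is dense in $H^1(\Omega)$, so by the continuity and surjectivity of the trace, the traces of smooth functions are dense in $H^{1/2}(\Gamma)$. A smooth function on $\bar\Omega$ lies in $W^{2,2}_{\vec\beta}(\Omega)$, so its trace lies in $D$ by \eqref{eq::tracenorm}. (Alternatively, the traces of continuous piecewise linear functions are dense and lie in $D$.)

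\emph{Step 4: the $\varepsilon/3$ argument.} Given $u\in H^{1/2}(\Gamma)$ and $\varepsilon>0$, choose $v\in D$ with $\Vert u-v\Vert_{H^{1/2}(\Gamma)}\le\varepsilon$. Then
\[
\Vert\mathcal{R}_h u\Vert_{H^1(\Omega)}\le 2M_{\mH}\varepsilon+\Vert\mathcal{R}_h v\Vert_{H^1(\Omega)},
\]
so $\limsup_{h\to0}\Vert\mathcal{R}_h u\Vert_{H^1(\Omega)}\le 2M_{\mH}\varepsilon$. Since $\varepsilon$ is arbitrary, the claim follows.

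The only step needing care is the density in Step 3. The trace-of-smooth-functions argument handles it directly.
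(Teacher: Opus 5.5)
Your argument is correct and is essentially the paper's density argument. The paper gets the $L^2$ part directly from Lemma~\ref{L5.9} and the gradient part from the C\'ea-type inequality \eqref{eq::Cea} plus a density argument as in Ciarlet's Theorem~18.2; you instead handle both parts at once, via the uniform bound on $\mH-\mH_h$ from Lemma~\ref{Le::contHh} and convergence on the dense set $W^{3/2,2}_{\vec\beta}(\Gamma)$, which amounts to the same thing.

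One correction: drop the parenthetical alternative in Step~3. A continuous piecewise linear function with a kink at a point inside a side is not locally in $H^{3/2}$ (already $\vert x\vert\notin H^{3/2}$ near $0$), so it does not lie in $W^{3/2,2}_{\vec\beta}(\Gamma)$. The smooth-trace argument is the one that works.
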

\begin{proof}
The convergence $\Vert \mH u - \mH_h u \Vert_{L^2(\Omega)}\to 0$ follows from Lemma \ref{L5.9}. The convergence $\Vert \nabla(\mH u - \mH_h u) \Vert_{L^2(\Omega)}\to 0$
    results from a density argument as in \cite[Theorem 18.2]{Ciarlet91}. We can use this argument since we have proved the C\'{e}a type inequality \eqref{eq::Cea} and we have already proved error estimates for more regular $u$.
\end{proof}

\begin{lemma}\label{MML5x}
  For every $g\in \dualH$, $\mH^\star_h g\to \mH^\star g$ strongly in $H^{-1/2}(\Gamma)$.
  \end{lemma}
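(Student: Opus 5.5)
The plan is to combine a uniform bound on the operators $\mH_h^\star$ with a convergence rate on a dense subspace of $\dualH$. Strong convergence $\mH_h u\to\mH u$ (Lemma \ref{co::conv::discharm}) does not by itself give strong convergence of the adjoints, so I would not rely on it. Instead I would use the $L^2$-error estimate of Lemma \ref{L5.9}, which holds uniformly over the unit ball of $H^{1/2}(\Gamma)$.

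\emph{Step 1: uniform bound.} By Lemma \ref{Le::contHh} and Remark \ref{re::R5.10}, $\Vert \mH_h\Vert_{\mL(H^{1/2}(\Gamma),H^1(\Omega))}\le M_{\mH}$ independently of $h$. Hence $\Vert \mH_h^\star\Vert_{\mL(\dualH,H^{-1/2}(\Gamma))}\le M_{\mH}$, and likewise $\Vert\mH^\star\Vert\le M_{\mH}$.

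\emph{Step 2: rate for $L^2$ data.} Let $g\in L^2(\Omega)$, identified with an element of $\dualH$ via $\langle g,y\rangle_\Omega=(g,y)_\Omega$. Fix some admissible $s>0$, for instance $s\le 1$ with $s<\lambda_{\Delta,j}/\mu_j$ for all $j$. For every $v\in H^{1/2}(\Gamma)$, Lemma \ref{L5.9} gives
\[
\langle (\mH^\star-\mH_h^\star)g,v\rangle_\Gamma=(g,\mH v-\mH_h v)_\Omega\le \Vert g\Vert_{L^2(\Omega)}\,C_0h^{s}\Vert v\Vert_{H^{1/2}(\Gamma)} .
\]
Taking the supremum over $\Vert v\Vert_{H^{1/2}(\Gamma)}=1$ yields $\Vert(\mH^\star-\mH_h^\star)g\Vert_{H^{-1/2}(\Gamma)}\le C_0h^s\Vert g\Vert_{L^2(\Omega)}\to0$.

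\emph{Step 3: density.} I would show that $L^2(\Omega)$ is dense in $\dualH$. The embedding $H^1(\Omega)\hookrightarrow L^2(\Omega)$ is injective with dense range, and $H^1(\Omega)$ is reflexive. So if some $y\in H^1(\Omega)=(\dualH)'$ annihilates $L^2(\Omega)$, then $(g,y)_\Omega=0$ for all $g\in L^2(\Omega)$, which forces $y=0$. Hahn–Banach then gives the density.

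\emph{Step 4: conclusion.} Given $g\in\dualH$ and $\varepsilon>0$, choose $g_\varepsilon\in L^2(\Omega)$ with $\Vert g-g_\varepsilon\Vert_{\dualH}<\varepsilon$. Then
\[
\Vert(\mH^\star-\mH_h^\star)g\Vert_{H^{-1/2}(\Gamma)}\le 2M_{\mH}\varepsilon + C_0h^s\Vert g_\varepsilon\Vert_{L^2(\Omega)},
\]
so the $\limsup$ as $h\to0$ is at most $2M_{\mH}\varepsilon$. Since $\varepsilon$ is arbitrary, the claim follows.

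The only delicate point is the first one: convergence must be uniform over bounded sets in $H^{1/2}(\Gamma)$. That uniformity is exactly what Lemma \ref{L5.9} supplies in the weaker $L^2$ norm, and it is why the argument passes through $L^2$ data and density rather than through Lemma \ref{co::conv::discharm}.
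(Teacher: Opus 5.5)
Your proposal is correct and follows the paper's proof essentially step by step: a rate $Ch^s\Vert \tilde g\Vert_{L^2(\Omega)}$ for $L^2$ data from \eqref{eq::err:L2Harm}, the $h$-uniform bound $M_{\mH}$ on $\mH_h^\star$ and $\mH^\star$, density of $L^2(\Omega)$ in $\dualH$, and an $\varepsilon$-argument. The paper takes the density for granted, while you justify it via reflexivity and Hahn--Banach; you also correctly measure $g-g_\varepsilon$ in the $\dualH$ norm, where the paper's text has a typo.
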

\begin{proof}
  Denote $\hat S = \{v\in H^{1/2}(\Gamma):\ \Vert v\Vert _{H^{1/2}(\Gamma)} = 1\}$. For $\tilde g\in L^2(\Omega)$, we can write, using \eqref{eq::err:L2Harm},
  \begin{align*}
    \Vert \mH^\star \tilde g - \mH_h^\star\tilde g \Vert _{H^{-1/2}(\Gamma)}   
    &= \sup_{v\in \hat S}\langle \mH^\star \tilde g - \mH_h^\star\tilde g,v\rangle_\Gamma 
    = \sup_{v\in \hat S}\langle \tilde g, \mH v- \mH_h v\rangle_\Gamma  \\
    &= \sup_{v\in \hat S}( \tilde g, \mH v- \mH_h v)_\Gamma  
    \leq  \sup_{v\in \hat S}\Vert \tilde g\Vert _{L^2(\Omega)} \Vert\mH v -\mH_h v\Vert_{L^2(\Omega)} \\
    & \leq \sup_{v\in \hat S}\Vert \tilde g\Vert _{L^2(\Omega)} C h^{s} \Vert v\Vert _{H^{1/2}(\Gamma)},
  \end{align*}
  and we obtain that
  \begin{equation}\label{eq::conv::reg}
    \Vert \mH^\star \tilde g - \mH_h^\star\tilde g \Vert _{H^{-1/2}(\Gamma)} \to 0\text{ as }h\to 0.
  \end{equation}
  Take $g\in \dualH$. By the triangle inequality, and taking into account Remark \ref{re::R5.10}, we have, for any $\tilde g\in L^2(\Omega)$, that
\begin{align}
    \Vert &\mH^\star g - \mH_h^\star g \Vert _{H^{-1/2}(\Gamma)} \notag \\ &\leq
    \Vert \mH^\star g - \mH^\star \tilde g \Vert _{H^{-1/2}(\Gamma)} +
    \Vert \mH^\star \tilde g - \mH_h^\star \tilde g \Vert _{H^{-1/2}(\Gamma)} \notag  +
    \Vert \mH_h^\star \tilde g - \mH_h^\star g \Vert _{H^{-1/2}(\Gamma)}\notag \\
    &\leq 2 M_{\mH}\Vert  g - \tilde g \Vert _{H^{-1/2}(\Gamma)} +
    \Vert \mH^\star \tilde g - \mH_h^\star \tilde g \Vert _{H^{-1/2}(\Gamma)}.\label{eq::triang}
\end{align}
Since $L^2(\Omega)$ is dense in $\dualH$, for every $\varepsilon>0$ there exists $\tilde g_\varepsilon \in L^2(\Omega)$ such that $2 M_{\mH}\Vert  g - \tilde g_\varepsilon \Vert _{H^{-1/2}(\Gamma)} < \varepsilon/2$. From \eqref{eq::conv::reg}, we deduce the existence of $h_\varepsilon > 0 $ such that $\Vert \mH^\star \tilde g_\varepsilon - \mH_h^\star \tilde g_\varepsilon \Vert _{H^{-1/2}(\Gamma)} < \varepsilon/2$ for all $0< h < h_\varepsilon$. Therefore, from \eqref{eq::triang}, we have that $\Vert \mH^\star \tilde g - \mH_h^\star \tilde g \Vert _{H^{-1/2}(\Gamma)} < \varepsilon$ for all $0< h < h_\varepsilon$ and the proof is complete.
\end{proof}

\subsection{State approximation}
As we say in Theorem \ref{T2.2}, the state $y_u$ can be written as the sum of $\eta_{\mE u}$ and $\mE u$ for any extension operator $\mE$. We have already studied the approximation of $\mE u$ when we use the harmonic extension. We next investigate the discretization of $\eta$.

Let $\eta\in H^1_0(\Omega)$ and $\eta_h\in Y_{0,h}$ be the solutions of the problems
\begin{align}
\mathfrak{a}(\eta,\zeta)& =  \langle f,\zeta\rangle_\Omega\ \forall\zeta\in H^1_0(\Omega) \label{eq::E5.4.1}\\
\mathfrak{a}(\eta_h,\zeta_h) &=  \langle f,\zeta_h\rangle_\Omega\ \forall\zeta_h\in Y_{0,h}\label{eq::E5.4.2}
\end{align}
with the bilinear form $\mathfrak{a}:H^1(\Omega)\times H^1(\Omega)\to\mathbb{R}$ defined in \eqref{eq:def:frak:a}. 

\begin{lemma}\label{L5.1}
  There exists $h_{\mA}>0$ such that for every $0<h<h_{\mA}$ and every $f\in H^{-1}(\Omega)$ there exist a unique solution $\eta_h\in Y_{0,h}$ of \eqref{eq::E5.4.2}
   and a constant $c_{\mathcal{A}}$ independent of $\vec\mu$ and $h$ such that
  \[\Vert \eta_h\Vert_{H^1_0(\Omega)} \leq c_{\mathcal{A}} \Vert f\Vert_{H^{-1}(\Omega)}.\]
\end{lemma}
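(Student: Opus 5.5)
The plan is the classical Schatz argument for non-coercive problems satisfying a G{\aa}rding inequality. Since $A$ is uniformly elliptic, $b\in L^\infty$ on compact subsets (indeed smooth away from $S$, and bounded under Assumption \ref{A2.1}), and $a_0\ge0$, I would first establish
\[
\mathfrak{a}(\zeta,\zeta)\ge \tfrac{\Lambda}{2}\Vert\nabla\zeta\Vert_{L^2(\Omega)}^2 - C_G\Vert \zeta\Vert_{L^2(\Omega)}^2\quad\forall\zeta\in H^1_0(\Omega).
\]
This follows from Young's inequality applied to $\int_\Omega (b\cdot\nabla\zeta)\zeta$. With the weaker integrability of $b$ mentioned in Remark \ref{R2.2}, one would use H\"older and Gagliardo--Nirenberg instead.

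Since $Y_{0,h}$ is finite dimensional, existence and uniqueness of $\eta_h$ both follow from uniqueness for $f=0$. It therefore suffices to prove the a priori bound $\Vert\eta_h\Vert_{H^1_0(\Omega)}\le c_{\mA}\Vert f\Vert_{H^{-1}(\Omega)}$ for any solution $\eta_h$ with $h<h_{\mA}$.

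The key step is a duality (Aubin--Nitsche type) bound on $\Vert\eta_h\Vert_{L^2(\Omega)}$. Let $\eta=\mA_0^{-1}f$, so that $\Vert\eta\Vert_{H^1_0}\le C_{\mA}\Vert f\Vert_{H^{-1}}$ and Galerkin orthogonality $\mathfrak{a}(\eta-\eta_h,\zeta_h)=0$ holds for all $\zeta_h\in Y_{0,h}$. Take $g=\eta-\eta_h\in L^2(\Omega)$ and the adjoint solution $\phi_g$ from \eqref{eq::Sstar}. By Lemma \ref{le::L4.1} with a fixed $\vec\beta$ ($\beta_j\ge0$, $1-\lambda_{A,j}<\beta_j<1$) and Lemma \ref{lem:4.1}, we have $\phi_g\in V^{2,2}_{\vec\beta}(\Omega)\hookrightarrow W^{2,2}_{\vec\beta}(\Omega)$ with norm bounded by $c\Vert g\Vert_{L^2}$. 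Then
\[
\Vert\eta-\eta_h\Vert_{L^2}^2=\mathfrak{a}(\eta-\eta_h,\phi_g-\mI_h\phi_g)\le M_{\mA}\Vert\eta-\eta_h\Vert_{H^1}\,c_{\mI}h^{s}c\Vert\eta-\eta_h\Vert_{L^2}
\]
for some $s>0$ allowed by \eqref{eq:sprime}. Only a positive power $h^s$ is needed here, not an optimal one, so any grading $\mu_j\in(0,1]$ suffices. This gives $\Vert\eta-\eta_h\Vert_{L^2}\le C h^s\Vert\eta-\eta_h\Vert_{H^1}$.

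Next I would combine this with G{\aa}rding's inequality applied to $e=\eta-\eta_h$. Using Galerkin orthogonality, $\mathfrak{a}(e,e)=\mathfrak{a}(e,\eta-\zeta_h)$ for any $\zeta_h\in Y_{0,h}$; choosing $\zeta_h=0$ gives $\mathfrak{a}(e,e)=\mathfrak{a}(e,\eta)\le M_{\mA}\Vert e\Vert_{H^1}\Vert\eta\Vert_{H^1}$. Hence
\[
\tfrac{\Lambda}{2}\Vert\nabla e\Vert^2\le M_{\mA}\Vert e\Vert_{H^1}\Vert\eta\Vert_{H^1}+C_G C^2h^{2s}\Vert e\Vert_{H^1}^2 .
\]
Poincar\'e's inequality lets me replace $\Vert e\Vert_{H^1}$ by $\Vert\nabla e\Vert$ up to a constant. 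Choosing $h_{\mA}$ so that the last term is absorbed (say $C_GC^2c_Ph_{\mA}^{2s}\le\Lambda/4$) yields $\Vert e\Vert_{H^1}\le c\Vert\eta\Vert_{H^1}$. The triangle inequality then gives $\Vert\eta_h\Vert_{H^1_0}\le c\,C_{\mA}\Vert f\Vert_{H^{-1}}$, and applying it with $f=0$ gives uniqueness.

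The main obstacle is independence of $\vec\mu$. The duality step uses the interpolation constant $c_{\mI}$ and the exponent $s$, which depend on $\vec\mu$; in the argument above they only enter $h_{\mA}$. So I would state $h_{\mA}$ as possibly $\vec\mu$-dependent, and check that the final constant $c_{\mA}$ involves only $\Lambda$, $M_{\mA}$, $C_{\mA}$, and the Poincar\'e constant, all of which are mesh-independent. A secondary technical point is justifying the G{\aa}rding inequality under the coefficient assumptions near the corners. There, boundedness of $b$ on $\bar\Omega$ (continuity up to $S$), or the $L^{\ps}$ route, must be invoked.
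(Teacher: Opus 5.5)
Your proposal is correct and is essentially the argument the paper relies on. The paper's proof only refers to \cite[Theorem 4.2]{AMR2024}, which is a Schatz-type combination of a G{\aa}rding inequality with the duality bound $\Vert\eta-\eta_h\Vert_{L^2(\Omega)}\le ch^s\Vert\eta-\eta_h\Vert_{H^1_0(\Omega)}$, obtained from Lemma \ref{le::L4.1} and the graded-mesh interpolation estimate; as you observe, the $\vec\mu$-dependent quantities only affect $h_{\mA}$ and not $c_{\mA}$.

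The only thing to fix is your parenthetical claim that $b$ is bounded under Assumption \ref{A2.1}. Smoothness on $\bar\Omega\setminus S$ does not by itself give boundedness at the corners, and the coefficients of Example \ref{ex:2} indeed blow up there. So the $L^{\ps}$ version of the G{\aa}rding inequality via H\"older and Gagliardo--Nirenberg, which you already mention, is the one to use.
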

\begin{proof}
    A similar result (homogeneous Dirichlet bundary conditions) is proved in \cite[Lemma 3.1]{CMR2021} but for $f\in L^2(\Omega)$, a convex domain and quasi-uniform mesh family. On the other hand, an analogous result for a Neumann problem with a right hand side in $\dualH$ was obtained for possibly non-convex polygonal domains and graded meshes in \cite[Theorem 4.2]{AMR2024}. The proof of Lemma \ref{L5.1} follows the lines of \cite[Theorem 4.2]{AMR2024} with the corresponding modifications that are meaningful using the $W^{2,2}_{\vec\beta}(\Omega)$ regularity results obtained in Section \ref{S4}.
\end{proof}

\begin{lemma}
    If $f\in H^{-1}(\Omega)$, then there exists a constant $c_{\eqref{eq::erretaL2}}>0$, that may depend on $\vec\mu$ but is independent of $h$, such that 
    \begin{equation}\label{eq::erretaL2}
    \Vert \eta - \eta_h\Vert_{L^2(\Omega)} \leq c_{\eqref{eq::erretaL2}} h^s \Vert \eta-\eta_h \Vert_{H^1_0(\Omega)},
    \end{equation}
    where $s\leq 1$ satisfies $s < \lambda_{A,j}/\mu_j$ for all $j\in\{1,\ldots,m\}$.

    If, further, $f\in L^2_{\vec\beta}$, where $\vec\beta$ satisfies $\beta_j\geq 0$ and $1-\lambda_{A,j} < \beta_j < 1$ for all $j\in\{1,\ldots,m\}$, then there exists a constant $c_{\eqref{eq::erreta}}>0$, that may depend on $\vec\mu$ but is independent of $h$
    \begin{equation}\label{eq::erreta}
        \Vert\eta-\eta_h\Vert_{H^1_0(\Omega)} \leq c_{\eqref{eq::erreta}} h^s \Vert f\Vert_{L^2_{\vec\beta}(\Omega)},
    \end{equation}
   where $s\leq 1$ satisfies $s \leq (1-\beta_j)/\mu_j$ for all $j\in\{1,\ldots,m\}$. 
\end{lemma}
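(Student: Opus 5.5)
For \eqref{eq::erretaL2} I would use an Aubin--Nitsche duality argument built on the adjoint problem, and for \eqref{eq::erreta} a C\'ea/Schatz-type quasi-optimality combined with the weighted regularity of Lemma \ref{le::L4.1} and the interpolation estimate \eqref{eq:interpolation_error}. Throughout, $h<h_{\mA}$ so that Lemma \ref{L5.1} provides $\eta_h$, and both parts rest on Galerkin orthogonality, $\mathfrak{a}(\eta-\eta_h,\zeta_h)=0$ for all $\zeta_h\in Y_{0,h}$.

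\emph{Step 1: the $L^2$ estimate.} Let $e=\eta-\eta_h\in H^1_0(\Omega)$ and set $\phi=\phi_e=(\mA_0^\star)^{-1}e$, the solution of \eqref{eq::Sstar} with $g=e\in L^2(\Omega)\hookrightarrow L^2_{\vec\beta}(\Omega)$. Choose $\vec\beta$ with $\beta_j\ge0$, $1-\lambda_{A,j}<\beta_j<1$ and $s\le(1-\beta_j)/\mu_j$. Such a choice exists precisely because $s<\lambda_{A,j}/\mu_j$: take $\beta_j=\max\{0,1-s\mu_j\}$, increased slightly if necessary, which is possible since the inequality is strict.

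Then
\[
\Vert e\Vert_{L^2(\Omega)}^2=\mathfrak{a}(e,\phi)=\mathfrak{a}(e,\phi-\mI_h\phi)\le M_{\mA}\Vert e\Vert_{H^1_0(\Omega)}\Vert\phi-\mI_h\phi\Vert_{H^1(\Omega)}.
\]
Here $\phi\in V^{2,2}_{\vec\beta}(\Omega)\hookrightarrow W^{2,2}_{\vec\beta}(\Omega)$, so $\mI_h\phi$ is defined and lies in $Y_{0,h}$. Lemma \ref{L5.1interpolation} and Lemma \ref{le::L4.1} give
\[
\Vert\phi-\mI_h\phi\Vert_{H^1(\Omega)}\le c_{\mI}h^s c_{\eqref{eq:V2fromL2}}\Vert e\Vert_{L^2_{\vec\beta}(\Omega)}\le c\,h^s\Vert e\Vert_{L^2(\Omega)},
\]
using $r_j^{\beta_j}\le1$. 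Dividing by $\Vert e\Vert_{L^2(\Omega)}$ yields \eqref{eq::erretaL2}.

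\emph{Step 2: quasi-optimality in $H^1_0$.} The form is not coercive, so C\'ea's lemma must be replaced by Schatz's argument, combining G\aa rding's inequality with Step 1. The operator splits as $\mathfrak{a}(v,v)\ge\Lambda\Vert\nabla v\Vert^2-\Vert b\Vert_{L^\infty}\Vert\nabla v\Vert\Vert v\Vert$, which gives
\[
\mathfrak{a}(v,v)\ge\tfrac{\Lambda}{2}\Vert\nabla v\Vert_{L^2(\Omega)}^2-C\Vert v\Vert_{L^2(\Omega)}^2 .
\]
Apply this with $v=e$, and use orthogonality to write $\mathfrak{a}(e,e)=\mathfrak{a}(e,\eta-\mI_h\eta)\le M_{\mA}\Vert e\Vert_{H^1_0}\Vert\eta-\mI_h\eta\Vert_{H^1}$. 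Step 1 then bounds $C\Vert e\Vert^2_{L^2}\le Cc^2h^{2s}\Vert e\Vert^2_{H^1_0}$, and for $h$ small this term is absorbed on the left. The result is $\Vert e\Vert_{H^1_0}\le c\Vert\eta-\mI_h\eta\Vert_{H^1}$. For the remaining $h\in[h_0,h_{\mA})$, Lemma \ref{L5.1} gives $\Vert e\Vert_{H^1_0}\le c\Vert\eta-\mI_h\eta\Vert_{H^1}$ directly, with an $h$-independent constant.

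\emph{Step 3: the $H^1$ rate.} For $f\in L^2_{\vec\beta}(\Omega)$, the argument of Lemma \ref{le::L4.1} applied to $\mA_0$ instead of $\mA_0^\star$ gives $\eta\in V^{2,2}_{\vec\beta}(\Omega)$ with $\Vert\eta\Vert_{W^{2,2}_{\vec\beta}}\le c\Vert f\Vert_{L^2_{\vec\beta}}$. This is the step I expect to need the most care: one has to check that \cite[(6.3.15)]{KozlovMazyaRossmann1997} applies to the non-adjoint operator with the same singular exponents $\lambda_{A,j}$. That should hold, since the principal part is symmetric and the leading-order operators coincide. Combining this bound with \eqref{eq:interpolation_error} and Step 2 gives \eqref{eq::erreta}.
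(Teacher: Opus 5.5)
Your proposal is correct and follows the paper's proof. The bound \eqref{eq::erretaL2} comes from the same Aubin--Nitsche argument with $\phi_{\eta-\eta_h}$, Lemma \ref{le::L4.1} and \eqref{eq:interpolation_error}, and \eqref{eq::erreta} comes from quasi-optimality combined with $\Vert\eta\Vert_{W^{2,2}_{\vec\beta}(\Omega)}\le c\Vert f\Vert_{L^2_{\vec\beta}(\Omega)}$ and interpolation, which is what the paper delegates to \cite[Theorem 4.5]{AMR2024}.

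The Schatz/G\aa rding detour in your Step 2 is superfluous, and its $\Vert b\Vert_{L^\infty}$ bound is not guaranteed by Assumption \ref{A2.1}, which only asks for smoothness away from the vertices. Applying Lemma \ref{L5.1} to $\eta_h-\mI_h\eta\in Y_{0,h}$, which satisfies $\mathfrak{a}(\eta_h-\mI_h\eta,\zeta_h)=\langle\mA(\eta-\mI_h\eta),\zeta_h\rangle_\Omega$, already gives quasi-optimality for every $h<h_{\mA}$. Likewise, $\beta_j=1-s\mu_j$ works directly in Step 1; increasing it would actually break $s\le(1-\beta_j)/\mu_j$.
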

\begin{proof}
Estimate \eqref{eq::erreta} is proved in \cite[Theorem 3.6]{CMR2021} for $f\in L^2(\Omega)$, a convex domain and a quasi-uniform mesh family. An analogous result for a Neumann problem with a right hand side in $L^2_{\vec\beta}(\Omega)$, a possibly non-convex domain, and graded meshes is proved in \cite[Theorem 4.5]{AMR2024}. The proof of \eqref{eq::erreta} follows the same lines noting that $\Vert \eta\Vert_{W^{2,2}_{\vec\beta}(\Omega)}\le c_{\eqref{eq:V2fromL2}}\Vert f\Vert_{L^2_{\vec\beta}(\Omega)}$; see Lemma \ref{le::L4.1}.

To prove \eqref{eq::erretaL2} we define $\phi=\phi_{\eta-\eta_h}\in H^1_0(\Omega)$ the unique solution of the dual problem
\[\mathfrak{a}(\zeta,\phi) = (\eta-\eta_h,\zeta)\ \forall \zeta\in H^1_0(\Omega).\]
Thanks to Lemma \ref{le::L4.1}, we know that $\phi\in V^{2,2}_{\vec\beta}(\Omega)$ for some $\vec\beta$ satisfying $\beta_j\in [0,1)$ and  $1-\beta_{j} < \lambda_{A,j}$ for all $j\in\{1,\ldots,m\}$. Noting that $V^{2,2}_{\vec\beta}(\Omega)\hookrightarrow W^{2,2}_{\vec\beta}(\Omega)$, and denoting $c_i$ the norm of the embedding operator, we conclude with the help of the Poincar\'{e} inequality, the interpolation error estimate \eqref{eq:interpolation_error} and estimate \eqref{eq:V2fromL2} that
\begin{align*}
   \Vert \eta - \eta_h\Vert_{L^2(\Omega)}^2 & = \mathfrak{a}(\eta-\eta_h,\phi) = \mathfrak{a}(\eta-\eta_h,\phi-\mI_h\phi) \\
   & \leq M_{\mA} \Vert \eta-\eta_h \Vert_{H^1(\Omega)} \Vert \phi-\mI_h\phi \Vert_{H^1(\Omega)} \\ & \leq
   c_{\mI} M_{\mA}(1+c_\Omega)  h^s \Vert \eta-\eta_h \Vert_{H^1_0(\Omega)} \Vert \phi\Vert_{W^{2,2}_{\vec\beta}(\Omega)} \\
   & \leq c_{i} c_{\mI} M_{\mA} (1+c_\Omega) h^s \Vert \eta-\eta_h \Vert_{H^1_0(\Omega)} \Vert \phi\Vert_{V^{2,2}_{\vec\beta}(\Omega)}\\
   & \leq c_{\eqref{eq:V2fromL2}} c_{i} c_{\mI} M_{\mA} (1+c_\Omega) h^s \Vert \eta-\eta_h \Vert_{H^1_0(\Omega)} \Vert \eta - \eta_h\Vert_{L^2(\Omega)},
\end{align*}
where $s\leq 1$ satisfies $s \leq (1-\beta_j)/\mu_j$ for all $j\in\{1,\ldots,m\}$.
\end{proof}

\medskip
For every $z\in H^1(\Omega)$, we have that $\mA z\in \dualH\hookrightarrow H^{-1}(\Omega)$. By Lemma \ref{L5.1}, there exists a unique solution $\eta_h(z)\in Y_{0,h}$  of
  \begin{equation}\label{E5.1}\mathfrak{a}(\eta_h(z),\zeta_h) = - \langle \mA z,\zeta_h\rangle_\Omega\ \forall\zeta_h\in Y_{0,h}.\end{equation}
\begin{lemma}
  There exists a constant $c_{\eqref{eq::erretaL2_bis}}>0$ such that for every $z\in H^1(\Omega)$
  \begin{equation}\label{eq::erretaL2_bis}
    \Vert \eta_z-\eta_h(z) \Vert_{L^2(\Omega)} \leq c_{\eqref{eq::erretaL2_bis}} h^s \Vert z \Vert_{H^1(\Omega)},
  \end{equation}
  where $s\leq 1$ satisfies $s < \lambda_j/\mu_j$ for all $j\in\{1,\ldots,m\}$ and $\eta_z\in H^1_0(\Omega)$ is defined in \eqref{eq::defetaRu}.
\end{lemma}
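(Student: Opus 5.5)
The plan is to apply the previous lemma with $f=-\mA z\in\dualH\hookrightarrow H^{-1}(\Omega)$. For this right-hand side, $\eta=\eta_z$ is the solution of \eqref{eq::E5.4.1} and $\eta_h=\eta_h(z)$ is the solution of \eqref{eq::E5.4.2}; the latter is well defined for $h<h_{\mA}$ by Lemma \ref{L5.1}. For $h\ge h_{\mA}$ we take $h_{\mA}\le h\le \mathrm{diam}(\Omega)$ and enlarge the constant accordingly. The duality estimate \eqref{eq::erretaL2} holds for all $f\in H^{-1}(\Omega)$, with $s\le1$ and $s<\lambda_{A,j}/\mu_j$. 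Since $\lambda_j\le\lambda_{A,j}$, the hypothesis $s<\lambda_j/\mu_j$ is admissible, and we obtain
\[
\Vert \eta_z-\eta_h(z)\Vert_{L^2(\Omega)}\le c_{\eqref{eq::erretaL2}}h^s\Vert \eta_z-\eta_h(z)\Vert_{H^1_0(\Omega)}.
\]

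It remains to bound the $H^1_0$ error by $\Vert z\Vert_{H^1(\Omega)}$ uniformly in $h$. We use the triangle inequality
\[
\Vert \eta_z-\eta_h(z)\Vert_{H^1_0(\Omega)}\le \Vert\eta_z\Vert_{H^1_0(\Omega)}+\Vert\eta_h(z)\Vert_{H^1_0(\Omega)}.
\]
For the first term, \eqref{E::bound} gives $\Vert\eta_z\Vert_{H^1_0(\Omega)}\le C_{\mA}M_{\mA}\Vert z\Vert_{H^1(\Omega)}$. For the second, the discrete stability of Lemma \ref{L5.1} gives $\Vert\eta_h(z)\Vert_{H^1_0(\Omega)}\le c_{\mA}\Vert\mA z\Vert_{H^{-1}(\Omega)}\le c_{\mA}M_{\mA}\Vert z\Vert_{H^1(\Omega)}$. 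Combining these yields \eqref{eq::erretaL2_bis} with $c_{\eqref{eq::erretaL2_bis}}=c_{\eqref{eq::erretaL2}}(C_{\mA}+c_{\mA})M_{\mA}$.

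The only delicate point is whether \eqref{eq::erretaL2} is really available for a general $H^{-1}$ right-hand side. Its proof uses Galerkin orthogonality $\mathfrak{a}(\eta-\eta_h,\zeta_h)=0$ for $\zeta_h\in Y_{0,h}$, which holds for any $f\in H^{-1}(\Omega)$. It also uses the $V^{2,2}_{\vec\beta}$ regularity of the dual solution $\phi_{\eta-\eta_h}$ from Lemma \ref{le::L4.1}, which only requires $\eta-\eta_h\in L^2(\Omega)$. So no regularity of $z$ beyond $H^1$ is needed, and we must choose $\vec\beta$ with $1-\lambda_{A,j}<\beta_j<1$ and $s\le(1-\beta_j)/\mu_j$, which is possible precisely because $s<\lambda_{A,j}/\mu_j$. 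If one prefers not to rely on the $h\ge h_{\mA}$ convention, the statement can be read for $0<h<h_{\mA}$, which is all that is used later.
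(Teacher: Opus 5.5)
Your proof is correct and is the paper's argument: apply \eqref{eq::erretaL2} with $f=-\mA z$ (admissible since $\lambda_j\le\lambda_{A,j}$), then bound $\Vert\eta_z-\eta_h(z)\Vert_{H^1_0(\Omega)}$ via the continuous isomorphism bound and the discrete stability of Lemma \ref{L5.1}. This yields essentially the paper's constant $2c_{\eqref{eq::erretaL2}}c_{\mA}M_{\mA}$. Drop the aside about $h\ge h_{\mA}$: there $\eta_h(z)$ need not even be defined, so "enlarging the constant" is meaningless, and the lemma should simply be read for $0<h<h_{\mA}$, as you yourself note at the end.
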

\begin{proof}
    The result is an immediate consequence of \eqref{eq::erretaL2}, the fact that $\mA_{0}:H^1_0(\Omega)\mapsto H^{-1}(\Omega)$ is an isomorphism and Lemma \ref{L5.1}, so we can write $c_{\eqref{eq::erretaL2_bis}} = 2 c_{\eqref{eq::erretaL2}} c_{\mA} M_{\mA}$.
\end{proof}

\medskip
For every $u\in H^{1/2}(\Gamma)$ we will denote $y_h(u)\in Y_h$ the solution of
\begin{equation}\label{eq::discrete_state_eqn}
\mathfrak{a}(y_h(u),\zeta_h)=0\ \forall \zeta_h\in Y_{0,h},\ 
y_h(u) \equiv \mQ_h u,
\text{ on }\Gamma.\end{equation}
Moreover, we will denote the corresponding discrete solution operator by $\mS_h: u\mapsto y_h(u)$ as solution of
\eqref{eq::discrete_state_eqn}.

\begin{theorem}\label{th::T5.17}There exists $h_{\mA} > 0$ such that for all $0<h < h_{\mA}$ and every $u\in H^{1/2}(\Gamma)$ there exists a unique $y_h(u)\in Y_h$ solution of \eqref{eq::discrete_state_eqn}. Furthermore, for every $u\in H^{1/2}(\Gamma)$ and every discrete extension operator $\mE_h$, we can write $y_h(u)= \eta_h(\mE_h u) + \mE_h u$, where $\eta_h(\mE_h u)\in Y_{0,h}$ is defined in \eqref{E5.1}. Finally, there exists $M_{\mS}^{\star}>0$, that may depend on $\vec\mu$ but is independent of $u$ and $h$, such that
  \begin{equation}\label{E5.5}\Vert y_h(u)\Vert_{H^1(\Omega)} \leq  M_{\mS}^{\star} \Vert u\Vert_ {H^{1/2}(\Gamma)}.\end{equation}
The discrete solution operator $\mS_h:H^{1/2}(\Gamma)\to H^1(\Omega)$ is a discrete extension operator whose norm is bounded independently of $h$.  
\end{theorem}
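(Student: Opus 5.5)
The proof will mirror that of Theorem \ref{T2.2}, with the continuous objects replaced by their discrete counterparts. We take $h_{\mA}$ as in Lemma \ref{L5.1}, fix $0<h<h_{\mA}$, and fix a discrete extension operator $\mE_h$. The most convenient choice is $\mE_h=\mH_h$, because Lemma \ref{Le::contHh} bounds its norm independently of $h$.

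\emph{Existence and representation.} For $u\in H^{1/2}(\Gamma)$ we have $\mE_h u\in Y_h\subset H^1(\Omega)$, so $\mA\mE_h u\in\dualH\hookrightarrow H^{-1}(\Omega)$. Lemma \ref{L5.1} then gives a unique $\eta_h(\mE_h u)\in Y_{0,h}$ solving \eqref{E5.1}. We set $y_h:=\eta_h(\mE_h u)+\mE_h u\in Y_h$. On $\Gamma$ this gives $y_h=\mQ_h u$, since $\eta_h$ vanishes there and $(\mE_h u)_{\vert\Gamma}=\mQ_h u$. Moreover, for every $\zeta_h\in Y_{0,h}$,
\[\mathfrak{a}(y_h,\zeta_h)=\mathfrak{a}(\eta_h(\mE_h u),\zeta_h)+\langle\mA\mE_h u,\zeta_h\rangle_\Omega=0,\]
so $y_h$ solves \eqref{eq::discrete_state_eqn}.

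\emph{Uniqueness.} Suppose $\tilde y_h$ is another solution. Then $y_h-\tilde y_h\in Y_{0,h}$ and $\mathfrak{a}(y_h-\tilde y_h,\zeta_h)=0$ for all $\zeta_h\in Y_{0,h}$. The difference is therefore the discrete solution with $f=0$, and the stability bound of Lemma \ref{L5.1} forces it to vanish. Uniqueness also shows that the representation $y_h(u)=\eta_h(\mE_h u)+\mE_h u$ holds for \emph{every} discrete extension operator: each such choice yields a solution of \eqref{eq::discrete_state_eqn}, and all of them coincide.

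\emph{Stability estimate \eqref{E5.5}.} Using the representation with $\mE_h=\mH_h$, Lemma \ref{L5.1} and Lemma \ref{Le::contHh} give
\[\Vert \eta_h(\mH_h u)\Vert_{H^1_0(\Omega)}\le c_{\mA}\Vert \mA\mH_h u\Vert_{H^{-1}(\Omega)}\le c_{\mA}M_{\mA}\Vert\mH_h u\Vert_{H^1(\Omega)}\le c_{\mA}M_{\mA}M_{\mH}\Vert u\Vert_{H^{1/2}(\Gamma)}.\]
The $H^1_0(\Omega)$ norm controls the full $H^1(\Omega)$ norm by the Poincar\'e inequality, with constant $(1+c_\Omega)$. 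Adding $\Vert\mH_h u\Vert_{H^1(\Omega)}\le M_{\mH}\Vert u\Vert_{H^{1/2}(\Gamma)}$ yields \eqref{E5.5} with
\[M_{\mS}^\star=M_{\mH}\bigl(1+(1+c_\Omega)c_{\mA}M_{\mA}\bigr),\]
which depends on $\vec\mu$ only through $M_{\mH}$.

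\emph{Final statement.} The map $u\mapsto y_h(u)$ is linear by uniqueness and takes values in $Y_h$. Its trace is $\mQ_h u$, and it is bounded by \eqref{E5.5}. Hence $\mS_h$ is a discrete extension operator with norm at most $M_{\mS}^\star$, uniformly in $h$.

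The only delicate point is obtaining an $h$-uniform bound. This works because we use the $h$-uniform discrete inf-sup/stability of Lemma \ref{L5.1}, valid for $h<h_{\mA}$, together with the $h$-uniform bound on $\mH_h$ from Lemma \ref{Le::contHh}. An arbitrary $\mE_h$ would not do, since $M_{\mE_h}$ may depend on $h$.
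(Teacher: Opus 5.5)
Your proposal is correct and follows the paper's proof essentially step by step. It obtains existence via $y_h(u)=\eta_h(\mE_h u)+\mE_h u$ and Lemma \ref{L5.1}, uniqueness from the homogeneous discrete problem, and the $h$-uniform bound by choosing $\mE_h=\mH_h$ and combining Lemmas \ref{L5.1} and \ref{Le::contHh}. The only difference is cosmetic: you track the Poincar\'e constant explicitly, which gives $M_{\mS}^\star=M_{\mH}\bigl(1+(1+c_\Omega)c_{\mA}M_{\mA}\bigr)$ instead of the paper's $M_{\mH}(1+c_{\mA}M_{\mA})$.
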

\begin{proof}
On one hand, since $\eta_h(\mE_h u)\in Y_{0,h}$ and by our definition of discrete extension, $\mE_h u=\mQ_h u$ on $\Gamma$, it is clear that $\eta_h(\mE_h u) + \mE_h u=\mQ_h u$ on $\Gamma$. Also, by \eqref{E5.1}, 
  \[\mathfrak{a}\big(\eta_h(\mE_h u) + \mE_h u,\ \zeta_h\big) = - \langle \mA \mE_h u,\zeta_h\rangle_\Omega + \mathfrak{a}(\mE_h u,\zeta_h) =0\]
  and hence $y_h(u)= \eta_h(\mE_h u) + \mE_h u$ solves \eqref{eq::discrete_state_eqn}. If $z_h\in Y_h$ is a solution of \eqref{eq::discrete_state_eqn}, then the difference $\eta_h=y_h(u)-z_h\in Y_{0,h}$ satisfies $\mathfrak{a}(\eta_h,\zeta_h)=0$ for all $\zeta_h\in Y_{0,h}$ and by Lemma \ref{L5.1} we have $\eta_h=0$.
  To obtain an estimate independent of $h$, we use as discrete extension the discrete harmonic extension. From Lemma \ref{L5.1} and using that both the norm of this extension operator as well as the norm of $\mQ_h$ are independent of $h$ we obtain
  \[\Vert y_h(u)\Vert_{H^1(\Omega)}  = \Vert  \eta_h(\mH_h u) + \mH_h u\Vert_{H^1(\Omega)} \leq   M_{\mH}( 1 + c_{\mA} M_{\mA})\Vert u\Vert_{H^{1/2}(\Gamma)}.\]
The final statement follows directly from the previous estimate.
\end{proof}

\begin{remark}\label{re::R5.18}
The constant $M_{\mS}^\star$ obtained in Theorem \ref{th::T5.17} has the same formula as the one obtained at the end of the proof of Theorem \ref{T2.2}, but they may be different because of our abuse of notation warned in Remark \ref{re::R5.10}. We abuse notation again and use $M_{\mS} = \max\{M_{\mS},M_{\mS}^\star\}$ for both constants.
\end{remark}

\begin{lemma}\label{le::stateerror}
    There exist $h_{\mA} > 0$ and  a constant $c_{\eqref{eq:.sterrest}}>0$, that may depend on $\vec\mu$ but is independent of $h$, such that for all $0<h < h_{\mA}$
    and for every $u\in W^{3/2,2}_{\vec\beta}(\Gamma)$, where $\vec\beta$ satisfies $\beta_j\geq 0$ and $1-\lambda_j < \beta_j<1$ for all $j\in\{1,\ldots,m\}$, the following estimate holds:
    \begin{equation}\label{eq:.sterrest}\Vert y_u - y_h(u) \Vert_{L^2(\Omega)} \leq c_{\eqref{eq:.sterrest}} h^s \Vert  u \Vert_{W^{3/2,2}_{\vec\beta}(\Gamma)},\end{equation}
    where $s\leq 1$ satisfies $s \leq (1-\beta_j)/\mu_j$ for all $j\in\{1,\ldots,m\}$.
\end{lemma}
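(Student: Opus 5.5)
We split the state error using the harmonic extension on both levels. By Theorem \ref{T2.2} with $\mE=\mH$ we write $y_u=\eta_{\mH u}+\mH u$. By Theorem \ref{th::T5.17} with the discrete harmonic extension as discrete extension operator we write $y_h(u)=\eta_h(\mH_h u)+\mH_h u$. Hence
\begin{align*}
\Vert y_u-y_h(u)\Vert_{L^2(\Omega)}\le{}& \Vert \mH u-\mH_h u\Vert_{L^2(\Omega)}
+\Vert \eta_{\mH u}-\eta_h(\mH u)\Vert_{L^2(\Omega)}\\
&+\Vert \eta_h(\mH u)-\eta_h(\mH_h u)\Vert_{L^2(\Omega)}.
\end{align*}
We bound the three terms separately.

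\textbf{First term.} Lemma \ref{L5.9} gives the bound $C_0h^s\Vert u\Vert_{H^{1/2}(\Gamma)}$. Then $\Vert u\Vert_{H^{1/2}(\Gamma)}\le c\Vert u\Vert_{W^{3/2,2}_{\vec\beta}(\Gamma)}$ by the continuous embedding. The restriction $s\le(1-\beta_j)/\mu_j$ with $1-\beta_j<\lambda_j\le\lambda_{\Delta,j}$ yields $s<\lambda_{\Delta,j}/\mu_j$, as Lemma \ref{L5.9} requires.

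\textbf{Second term.} Here $f=-\mA\mH u$, and $\eta_h(\mH u)$ is its Galerkin approximation by \eqref{E5.1}. Estimate \eqref{eq::erretaL2} gives $c\,h^s\Vert\eta_{\mH u}-\eta_h(\mH u)\Vert_{H^1_0(\Omega)}$. Bounding this $H^1$ error only by $c\Vert\mH u\Vert_{H^1}$ already yields order $h^s$, as in \eqref{eq::erretaL2_bis}. In that case $s<\lambda_j/\mu_j$, which again follows from our restriction. So the second term needs no regularity beyond $H^{1/2}(\Gamma)$.

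\textbf{Third term.} This is the discrete solution operator applied to the data difference $-\mA(\mH u-\mH_h u)$. Lemma \ref{L5.1} and continuity of $\mA$ give only an $H^1$ bound $c_{\mA}M_{\mA}\Vert\mH u-\mH_h u\Vert_{H^1(\Omega)}$. Corollary \ref{C5.8} and Lemma \ref{L5.9} bound this by $c h^s\Vert u\Vert_{W^{3/2,2}_{\vec\beta}(\Gamma)}$, using the Poincaré/Friedrichs inequality on $Y_{0,h}$. This is where the $W^{3/2,2}_{\vec\beta}$ regularity is genuinely used.

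\textbf{Main obstacle.} The delicate point is matching the exponents. Corollary \ref{C5.8} is stated with the condition $s<\lambda_{\Delta,j}$. However, its proof, via \eqref{eq:interpolation_error} and Lemma \ref{th::T4.2}, actually works under $s\le1$ and $s\le(1-\beta_j)/\mu_j$. I would re-derive that step with $y_h=\mI_h\mH u$ and track the condition explicitly. I would also fix one $\vec\beta$ that simultaneously satisfies $1-\lambda_{A,j}<\beta_j$ and $1-\lambda_{\Delta,j}<\beta_j$, which holds since $\lambda_j=\min\{\lambda_{A,j},\lambda_{\Delta,j}\}$. Finally, we choose $h_{\mA}$ as in Lemma \ref{L5.1}, so that all discrete problems are uniquely solvable.
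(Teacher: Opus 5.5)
Your proposal is correct and follows the paper's proof essentially step for step. You use the same splitting $y_u-y_h(u)=(\eta_{\mH u}-\eta_h(\mH u))+\eta_h(\mH u-\mH_h u)+(\mH u-\mH_h u)$; the only cosmetic difference is that the paper tests against $g\in L^2(\Omega)$ where you use the triangle inequality. The pieces are bounded with the same tools: \eqref{eq::erretaL2_bis} for the first; Lemma \ref{L5.1}, Poincar\'e, continuity of $\mA$, Corollary \ref{C5.8} and Lemma \ref{L5.9} for the second; Lemma \ref{L5.9} for the third.

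Your remark on exponents is also a valid clarification of a point the paper passes over. The hypothesis $s<\lambda_{\Delta,j}$ in Corollary \ref{C5.8} should really read $s\le 1$, $s\le(1-\beta_j)/\mu_j$, as its proof via \eqref{eq:interpolation_error} and Lemma \ref{th::T4.2} shows.
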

\begin{proof}
For every $g\in L^2(\Omega)$ and with the homogenization technique, we get
\begin{align*}
   & ( y_u- y_h(u),g)_\Omega  = (\eta_{_{\mH u}} -  \eta_h(\mH_h u), g)_\Omega+
  ( \mH u -  \mH_h u, g)_\Omega \\
  & = ( \eta_{_{\mH u}} -  \eta_h(\mH u), g)_\Omega+
   ( \eta_h(\mH u) -  \eta_h(\mH_h u), g)_\Omega+
  ( \mH u -  \mH_h u, g)_\Omega = \mathrm{I} + \mathrm{II} + \mathrm{III}
\end{align*}
To estimate I, we notice that $s\leq (1-\beta_j)/\mu_j < \lambda_j/\mu_j \leq \lambda_{A,j}/\mu_j$, and hence we can use \eqref{eq::erretaL2_bis} to obtain
\begin{align*}
    \mathrm{I} & = ( \eta_{_{\mH u}} -  \eta_h(\mH u), g)_\Omega\leq \Vert \eta_{_{\mH u}} -  \eta_h(\mH u) \Vert_{L^2(\Omega)} \Vert g \Vert_{L^2(\Omega)} \\
    & \leq C h^s \Vert \mH u \Vert_{H^1(\Omega)} \Vert g \Vert_{L^2(\Omega)}
    \leq c_{\eqref{eq::erretaL2_bis}} M_{\mH} h^s \Vert  u \Vert_{H^{1/2}(\Gamma)} \Vert g \Vert_{L^2(\Omega)}\\
    & \leq c_i  c_{\eqref{eq::erretaL2_bis}} M_{\mH} h^s \Vert  u \Vert_{W^{3/2,2}_{\vec\beta}(\Gamma)} \Vert g \Vert_{L^2(\Omega)}
\end{align*}
where we used the embedding $W^{3/2,2}_{\vec\beta}(\Gamma)\hookrightarrow H^{1/2}(\Gamma)$ in the last step.

To estimate II we use the stability estimate for $\Vert \eta_h\Vert_{H^1_0(\Omega)}$ provided in Lemma \ref{L5.1}, the Poincar\'{e} inequality, the continuity of $\mA$. We also notice that the relations among $\beta_j$, $\lambda_j$ and $s$ together with the inequality $\lambda_j\leq \lambda_{\Delta,j}$ imply that we can use estimates \eqref{eq::err_discharm} and \eqref{eq::err:L2Harm} to obtain
\begin{align*}
  \mathrm{II} & =  ( \eta_h(\mH u) -  \eta_h(\mH_h u), g)_\Omega \leq 
  \Vert \eta_h(\mH u-\mH_h u)\Vert_{L^2(\Omega)} \Vert g\Vert_{L^2(\Omega)} \\
& \leq  c_\Omega \Vert \eta_h(\mH u-\mH_h u)\Vert_{H^1_0(\Omega)} \Vert g\Vert_{L^2(\Omega)} 
   \leq c_\Omega c_{\mathcal{A}} \Vert \mA(\mH u-\mH_h u)\Vert_{H^{-1}(\Omega)} \Vert g\Vert_{L^2(\Omega)} \\
  & \leq
  c_\Omega  c_{\mathcal{A}} M_{\mA} \Vert \mH u-\mH_h u\Vert_{H^1(\Omega)} \Vert g\Vert_{L^2(\Omega)} \\
  & \leq c_\Omega  c_{\mathcal{A}} M_{\mA}(c_i C_0+C_1) h^s \Vert  u\Vert_{W^{3/2,2}_{\vec\beta}(\Gamma)} \Vert g\Vert_{L^2(\Omega)}.
\end{align*}
The estimate for III follows directly from the estimate \eqref{eq::err:L2Harm} in Lemma \ref{L5.9} with $\beta_j>1-\lambda_{\Delta,j}$. Therefore, the result follows with $c_{\eqref{eq:.sterrest}} = c_i  c_{\eqref{eq::erretaL2_bis}} M_{\mH} + c_\Omega  c_{\mathcal{A}} M_{\mA}(c_i C_0+C_1) + c_i C_0$.   
\end{proof}

\begin{remark}\label{re::R5.19}We conjecture that the order of convergence in Lemma \ref{le::stateerror} can be doubled to obtain an approximation of order $h^{2s}$. Nevertheless, we could not profit from this improvement since the final order of convergence will be limited by the approximation rate of the other quantities, like the interpolation error estimate, cf. the proof of Lemma \ref{le::L6.5},  the error estimate obtained in Lemma \ref{le::L6.9} for the approximation of the Steklov-Poicar\'{e} operator, or the error estimate for the approximation of the adjoint operator obtained in the proof of Lemma \ref{le::L6.10}.
\end{remark}

\section{Numerical approximation. The control problem}\label{S6}
We define a discrete Dirichlet-to-Neumann or Poincare-Steklov operator $\mD_h:H^{1/2}(\Gamma)\to H^{-1/2}(\Gamma)$ via 
\[\langle \mD_h u, v\rangle_\Gamma :=  (\nabla \mH_h u,\nabla\mE_h v)_\Omega\ \forall v\in H^{1/2}(\Gamma),\]
where $\mE_h$ is a discrete extension operator.
\begin{remark}
The definition is independent of $\mE_h$: if $\mE_{h,1}$ and $\mE_{h,2}$ are discrete extension operators, then $(\mE_{h,1}-\mE_{h,2}) v\in Y_{0,h}$ for all $v\in H^{1/2}(\Gamma)$ and therefore $(\nabla \mH_h u,\nabla(\mE_{h,1}-\mE_{h,2})v)_\Omega=0$. 
\end{remark}
\begin{remark}\label{re::R6.2}
    This discretization of the Steklov-Poincar\'{e} operator $\mD$ is essentially different from the one used in \cite[eq. (3.15)]{OfPhanSteinbach2015}, where a {\em continuous} extension operator is used in the theory. On the other hand, in \cite[eqs. (5.6), (5.7)]{GongMateosSinglerZhang2022} a discrete extension operator based on the $L^2(\Gamma)$-projection is proposed. See Remark \ref{re::R6.7} about the different proof techniques that these choices imply and Remark \ref{re::R7.1} about the consequences in the practical implementation of the discretization. 
    
    The discrete normal derivative $\partial_n^h y_h$ used in \cite{Winkler_NM_2020} is built using the $L^2(\Gamma)$-projection. In our case, it also holds that for every $ u\in H^{1/2}(\Gamma)$, the linear functional $\mD_h u$ can be represented by the unique discrete function $\partial_n^h \mH_h  u\in U_h$ that satisfies 
    \[(\partial_n^h \mH_h  u, \mQ_h v)_{L^2(\Gamma)} = \langle \mD_h u, v\rangle_\Gamma\ \forall v\in H^{1/2}(\Gamma).\]%
    Nevertheless, we will not use this function neither for theoretical nor for practical purposes. 
\end{remark}

\medskip
We have that for every $v\in H^{1/2}(\Gamma)$,  $\langle \mD_h u, v\rangle_\Gamma = (\nabla \mH_h u,\nabla\mH_h v)_\Omega$, and hence the bilinear form $\langle \mD_h u, v\rangle_\Gamma$ is symmetric. Moreover, 
\[\langle \mD_h u_h, u_h\rangle_\Gamma^{1/2} = (\nabla \mH_h u_h,\nabla\mH_h u_h)_\Omega^{1/2}\]
is a seminorm in $U_h$. We show below in Remark \ref{re::R6.5} that it is a seminorm equivalent to $\Vert\mH u_h\Vert_{L^2(\Omega)}$, and hence to the Slobodetskii seminorm $\vert u_h\vert_{H^{1/2}(\Gamma)}$, with equivalence constants independent of $h$. Using Lemma \ref{Le::contHh} and Remark \ref{re::R5.10}, we have that the operator norm of $\mD_h$ is bounded by the constant $M_{\mH}^2$. Doing the same kind of abuse of notation as in the mentioned remark, we will denote this $M_{\mD}$.

In the rest of this section we will assume $0<h<h_\mA$. We define $\functional_h:H^{1/2}(\Gamma)\to \mathbb{R}$ as
\[\functional_h(u) = \frac12\Vert y_h(u)-y_d\Vert_{L^2(\Omega)}^2 +
\frac{\tikhonov}{2}\langle \mD_h (u-u_d),u-u_d\rangle_\Gamma\]
and consider the discrete problem
\[\Pbh\qquad \min_{u_h\in U_h} \functional_h(u_h).\]
We obtain now different expressions for $\functional_h'(u)$ that will help us to describe a practical optimization procedure, cf. \eqref{eq::discretederivative}, or to obtain error estimates, cf. \eqref{OCh}.
For every $u,v\in H^{1/2}(\Gamma)$
\[\functional_h'(u)v = (y_h(u)-y_d,y_h(v))_\Omega + \kappa \langle \mD_h (u-u_d), v\rangle_\Gamma.\]
For every $g\in H^{-1}(\Omega)$, we define $\phi_h(g)\in Y_{0,h}$ the unique solution of
\begin{equation}\label{eq::phih}
    \mathfrak{a}(\zeta_h,\phi_h(g)) = \langle g,\zeta_h\rangle_\Omega\ \forall \zeta_h\in Y_{0,h}.
\end{equation}
Since this is a linear system whose coefficient matrix is the transpose of that of the problem studied in Lemma \ref{L5.1}, existence and uniqueness of solution of this equation follows directly from Lemma \ref{L5.1}, as well as the estimate
\begin{equation}\label{E6.1}\Vert\phi_h(g)\Vert_{H^1_0(\Omega)} \leq c_{\mathcal{A}}\Vert g\Vert_{H^{-1}(\Omega)}.\end{equation}
With the same technique as for \eqref{eq::erreta}, we have that for $g\in L^2_{\vec\beta}$, where $\vec\beta$ satisfies $\beta_j\geq 0$, $1-\lambda_{A,j} < \beta_j < 1$, $s\leq 1$ such that $s \leq (1-\beta_j)/\mu_j$ for all $j\in\{1,\ldots,m\}$, we have
\begin{equation}\label{eq::errphi}
    \Vert\phi_g-\phi_h(g)\Vert_{H^1_0(\Omega)} \leq c_{\eqref{eq::erreta}} h^s \Vert g\Vert_{L^2_{\vec\beta}(\Omega)}.
\end{equation}
Let $\varphi_h(u) = \phi_h(y_h(u)-y_d)$. We have that
\begin{align*}
  (y_h(u)-y_d,y_h(v))_\Omega   &= (y_h(u)-y_d,\eta_h(\mE_h  v))_\Omega + (y_h(u)-y_d,\mE_h v)_\Omega \\ &=\mathfrak{a}(\eta_h(\mE_h v),\varphi_h(u)) + (y_h(u)-y_d,\mE_h v)_\Omega\\
   &= -\mathfrak{a}(\mE_h v,\varphi_h(u)) + (y_h(u)-y_d,\mE_h v)_\Omega,
\end{align*}
and hence
\begin{equation}\label{eq::discretederivative}
\functional_h'(u) v = -\mathfrak{a}(\mE_h v,\varphi_h(u)) + (y_h(u)-y_d,\mE_h v)_\Omega + \tikhonov (\nabla \mH_h (u-u_d),\nabla\mE_h v)_\Omega.
\end{equation}
We can also write the following. The adjoint operator of $\mS_h$ is $\mS_h^\star:\dualH \to H^{-1/2}(\Gamma)$. For every $g\in \dualH$, $\mS_h^\star g\in H^{-1/2}(\Gamma)$ and satisfies that for all $v\in H^{1/2}(\Gamma)$ and every discrete extension operator $\mE_h$,
\begin{align}
  \langle \mS_h^\star g,v\rangle_\Gamma &= \langle g, \mS_h v\rangle_\Omega = \langle g, \eta_h(\mE_h v)\rangle_\Omega +
  \langle g, \mE_h v\rangle_\Omega \notag \\
  &= \mathfrak{a}(\eta_h(\mE_h v),\phi_h(g)) + \langle g, \mE_h v\rangle_\Omega  = -\mathfrak{a}(\mE_h v,\phi_h(g)) + \langle g, \mE_h v\rangle_\Omega.
  \label{eq::SHstar}
\end{align}
A straight forward consequence of Theorem \ref{th::T5.17} and Remark \ref{re::R5.18} is that the operator norm of $\mS^\star_h$ is bounded independently of $h$ by the constant $M_{\mS}>0$.
Using $g=y_h(u)-y_d$ in \eqref{eq::SHstar} and remembering that $\varphi_h(u)=\phi_h(y_h(u)-y_d)$, and $y_h(u)=S_hu$,  the derivative can be expressed as 
\[\functional_h'(u) v = \langle \mS_h^\star \mS_h u + \tikhonov \mD_h u,v\rangle_\Gamma - \langle  \mS_h^\star y_d + \tikhonov \mD_h u_d,v\rangle_\Gamma\ \forall u,v\in H^{1/2}(\Gamma).
\]
Remembering that before Lemma \ref{L3.2} we denoted $\mT = \mS^\star\mS+\tikhonov \mD$ and $w = \mS^\star y_d + \tikhonov \mD u_d$, we now introduce
\[
\mT_h = \mS^\star_h\mS_h+\tikhonov \mD_h\text{ and }w_h=\mS_h^\star y_d + \tikhonov \mD_h u_d
\]
such that 
\begin{align}
    \functional_h'(u)v=\langle\mT_hu-w_h,v\rangle_\Gamma. \label{OCh}
\end{align}
We remark again that the operator norm of $\mT_h$ is bounded independently of $h$ by a constant that we also denote $M_{\mT}$, again abusing notation.

The proof of the following key result is similar to that of Lemma \ref{L3.3}, but there are some important details that are different, so we write it in detail. Notice that we have to use  the convergence results for finite element approximations.

\begin{theorem}\label{L6.1}
There exists $\nu^\star>0$ independent of $h$ such that \[\functional_h''(u) v_h^2\geq \nu^\star\Vert v_h\Vert_{H^{1/2}(\Gamma)}^2\ \forall v_h\in U_h.\]
\end{theorem}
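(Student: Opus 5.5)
Since $\functional_h$ is quadratic, $\functional_h''(u)v_h^2=\Vert y_h(v_h)\Vert_{L^2(\Omega)}^2+\tikhonov\Vert\nabla\mH_h v_h\Vert_{L^2(\Omega)}^2$, independent of $u$. I will argue by contradiction, mimicking the proof of Lemma \ref{L3.3}, with a sequence $h_n\to0$ in place of a fixed operator. Suppose there exist $h_n\to0$ and $v_n\in U_{h_n}$ with $\Vert v_n\Vert_{H^{1/2}(\Gamma)}=1$ and
\[\Vert y_{h_n}(v_n)\Vert_{L^2(\Omega)}^2+\tikhonov\Vert\nabla\mH_{h_n}v_n\Vert_{L^2(\Omega)}^2\le \tfrac1n .\]
For fixed $h$ the inequality holds with some $\nu_h>0$, by finite dimensionality plus the fact that the quadratic form is definite on $U_h$. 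Hence a failure of uniformity forces $h_n\to0$. The definiteness argument is the same as at the end of the main step below.

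\textbf{Compactness.} By extracting a subsequence we have $v_n\rightharpoonup v$ weakly in $H^{1/2}(\Gamma)$. Since $\mQ_{h_n}v_n=v_n$, we get $\mH_{h_n}v_n=\mS$-type discrete extensions with trace $v_n$, and by Lemma \ref{Le::contHh} and Theorem \ref{th::T5.17} both $\mH_{h_n}v_n$ and $y_{h_n}(v_n)$ are bounded in $H^1(\Omega)$.

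\textbf{Identifying the limits.} This is the main obstacle: I need to show $y_{h_n}(v_n)\to y_v$ and $\mH_{h_n}v_n\to\mH v$ in $L^2(\Omega)$, with both the discretization and the data varying. I will use the decomposition
\[\mH_{h_n}v_n-\mH v=(\mH_{h_n}-\mH)v_n+\mH(v_n-v).\]
The first term is bounded by $C_0h_n^s$ thanks to Lemma \ref{L5.9}, which holds uniformly for $\Vert v_n\Vert_{H^{1/2}(\Gamma)}\le1$. The second term tends to $0$ in $L^2(\Omega)$ because $\mH(v_n-v)\rightharpoonup0$ in $H^1(\Omega)$ and we can apply Rellich--Kondrachov.

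Similarly, I write $y_{h_n}(v_n)=\eta_{h_n}(\mH_{h_n}v_n)+\mH_{h_n}v_n$ and split
\[\eta_{h_n}(\mH_{h_n}v_n)-\eta_{\mH v}=\big[\eta_{h_n}(\mH v_n)-\eta_{\mH v_n}\big]+\eta_{h_n}(\mH_{h_n}v_n-\mH v_n)+\big[\eta_{\mH v_n}-\eta_{\mH v}\big].\]
The first bracket is $O(h_n^s)$ in $L^2$ by \eqref{eq::erretaL2_bis}. The middle term I handle as in step II of Lemma \ref{le::stateerror}: by Lemma \ref{L5.1} it is bounded by $c\Vert\mH_{h_n}v_n-\mH v_n\Vert_{H^1(\Omega)}$.

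This $H^1$ difference is only bounded, not small, so it is the delicate point. I would instead estimate it in $L^2$ by an Aubin--Nitsche argument for $\eta_{h_n}(z)$ with $z\in H^1(\Omega)$: $\Vert\eta_h(z)\Vert_{L^2}\le \Vert\eta_z\Vert_{L^2}+ch^s\Vert z\Vert_{H^1}$, where $\Vert\eta_z\Vert_{L^2}\le c\Vert\mA z\Vert_{H^{-1}}$. If a negative-norm bound of $\mA z$ in terms of $\Vert z\Vert_{L^2}$ is unavailable, I fall back on weak convergence. Namely, $\eta_{h_n}(\mH_{h_n}v_n)$ is bounded in $H^1_0$, so some subsequence converges weakly to a limit $\eta^*$. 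Testing \eqref{E5.1} with $\mI_{h_n}\zeta$ for smooth $\zeta$, and passing to the limit using the weak convergence $\mH_{h_n}v_n\rightharpoonup\mH v$ in $H^1$ (the strong $L^2$ convergence plus boundedness identifies the weak limit), gives $\mathfrak a(\eta^*+\mH v,\zeta)=0$. By uniqueness $\eta^*=\eta_{\mH v}$, and compactness then gives strong $L^2$ convergence. The third bracket tends to $0$ exactly as in Lemma \ref{L3.3}. Altogether $y_{h_n}(v_n)\to y_v$ in $L^2(\Omega)$.

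\textbf{Conclusion.} From the contradiction hypothesis, $y_v=0$, hence $v=\tr y_v=0$. Therefore $\mH_{h_n}v_n\to0$ in $L^2(\Omega)$, and also $\nabla\mH_{h_n}v_n\to0$, so $\mH_{h_n}v_n\to0$ in $H^1(\Omega)$. But the trace of $\mH_{h_n}v_n$ is $\mQ_{h_n}v_n=v_n$, which uses that $\mQ_h$ is a projection. Thus $1=\Vert v_n\Vert_{H^{1/2}(\Gamma)}\le M_{\tr}\Vert\mH_{h_n}v_n\Vert_{H^1(\Omega)}\to0$, a contradiction.
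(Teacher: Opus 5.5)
Your argument is correct and follows the paper's overall structure: a normalized contradicting sequence $v_n\in U_{h_n}$, a weak limit $v$, identification of the $L^2$ limits, the conclusion $v=0$, and the final trace contradiction using $\mQ_{h_n}v_n=v_n$. Where you differ is in how the limits are identified, and your route is shorter.

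For the discrete harmonic extensions, the paper gets $\mH_{h_n}v_n\rightharpoonup\mH v$ in $H^1(\Omega)$ by duality. It splits $\langle\mH_{h_n}v_n-\mH v,g\rangle_\Omega=\langle\mH_{h_n}^\star g,v_n-v\rangle_\Gamma+\langle(\mH_{h_n}-\mH)v,g\rangle_\Omega$ and invokes Lemma \ref{co::conv::discharm} (strong convergence for fixed data, via density and C\'ea) and Lemma \ref{MML5x} (strong convergence of $\mH_h^\star g$ in $H^{-1/2}(\Gamma)$). You instead apply the uniform bound \eqref{eq::err:L2Harm} directly to the moving data $v_n$, and Rellich--Kondrachov to $\mH(v_n-v)$. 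This gives strong $L^2$ convergence at once and makes both auxiliary lemmas unnecessary.

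For the homogeneous part, the paper asserts $\Vert\eta_{h_n}(\mH_{h_n}v_n)-\eta_{\mH_{h_n}v_n}\Vert_{H^1_0(\Omega)}\to0$ by \eqref{eq::erreta}. That estimate needs $f\in L^2_{\vec\beta}(\Omega)$, which $f=-\mA\mH_{h_n}v_n$ does not satisfy, and the claim fails in general. For $-\Delta$ one has $\eta_h(\mH_hv_h)=0$ while $\eta_{\mH_hv_h}=\mH v_h-\mH_hv_h$, whose energy norm is of order $\Vert v_h\Vert_{H^{1/2}(\Gamma)}$ for mesh-scale oscillations. What does hold, and suffices, is the $L^2$ estimate \eqref{eq::erretaL2_bis}. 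Your fallback needs only the uniform stability of Lemma \ref{L5.1}, so this issue does not affect it: you take a weak limit $\eta^*$, test \eqref{E5.1} with $\mI_{h_n}\zeta$, and identify $\eta^*=\eta_{\mH v}$ by uniqueness.

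The write-up can be trimmed, since once the fallback is in place the three-term split and the hedge are unnecessary.
\begin{itemize}
\item Your first branch also closes. Since $z=\mH_{h_n}v_n-\mH v_n\in H^1_0(\Omega)$, you have $\eta_z=-z$, hence $\Vert\eta_{h_n}(z)\Vert_{L^2(\Omega)}\le\Vert z\Vert_{L^2(\Omega)}+ch_n^s\Vert z\Vert_{H^1(\Omega)}\to0$ by \eqref{eq::err:L2Harm} and \eqref{eq::erretaL2_bis}. Simpler still, apply \eqref{eq::erretaL2_bis} directly to $z=\mH_{h_n}v_n$.
\item In the fallback, take $\zeta\in C_c^\infty(\Omega)$, so that $\mI_{h_n}\zeta\in Y_{0,h_n}$, and conclude by density.
\item Your reduction to $h_n\to0$ presumes that, for each $h_0>0$, only finitely many meshes have $h\ge h_0$. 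This is the standard setting, and the paper uses $h_n\to0$ tacitly.
\end{itemize}
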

\begin{proof}
  We have that
  \begin{align*}
  \functional_h''(u) v_h^2 &= (y_h(v_h),y_h(v_h))_\Omega + \kappa (\mD_h v_h,  v_h)_\Gamma \\
  & = \Vert\eta_h(\mH_h v_h) + \mH_h v_h\Vert_{L^2(\Omega)}^2 + \tikhonov \Vert\nabla \mH_h v_h\Vert_{L^2(\Omega)}^2.
  \end{align*}
If the statement is false, there exist sequences $\{h_n\}_n$ and  $\{v_{h_n}\}_n\subset H^{1/2}(\Gamma)$ such that for all $n\geq 1$, it holds that $v_{h_n}\in U_{h_n}$, $\Vert v_{h_n}\Vert_{H^{1/2}(\Gamma)} = 1$ and
\begin{equation}\label{eq::contr_assumpt}\Vert\eta_{h_n}(\mH_{h_n}v_{h_n}) + \mH_{h_n} v_{h_n}\Vert_{L^2(\Omega)}^2 + \tikhonov \Vert\nabla \mH_{h_n} v_{h_n}\Vert_{L^2(\Omega)}^2\leq \frac{1}{n}.\end{equation}
Since $\{v_{h_n}\}_n$ is bounded in $H^{1/2}(\Gamma)$, 
there exist a subsequence $\{v_{h_n}\}_n$, that we do not relabel, and $v\in H^{1/2}(\Gamma)$, such that $v_{h_n}\rightharpoonup v$ weakly in $H^{1/2}(\Gamma)$ as $n\to\infty$.  

We first deduce that $\mH_{h_n} v_{h_n} \rightharpoonup \mH v $ weakly in $H^1(\Omega)$ as $n\to\infty$ as follows: for any  $g\in\dualH$ we have that
\begin{align*}
  \langle \mH_{h_n} v_{h_n} - \mH v,g\rangle_\Omega &= 
  \langle \mH_{h_n} v_{h_n} -\mH_{h_n} v,g\rangle_\Omega  + \langle \mH_{h_n} v- \mH v,g\rangle_\Omega \\
   &= \langle \mH_{h_n}^\star g, v_{h_n}-v\rangle_\Gamma + \langle \mH_{h_n} v- \mH v,g\rangle_\Omega.
\end{align*}
The second term tends to zero due to Lemma \ref{co::conv::discharm}. For the first one we have
\begin{align*}
  \langle \mH_{h_n}^\star g, v_{h_n}-v\rangle_\Gamma  &= 
  \langle \mH_{h_n}^\star g-\mH^\star g , v_{h_n}-v\rangle_\Gamma +
  \langle \mH^\star g, v_{h_n}-v\rangle_\Gamma.
\end{align*}
The second addend tends to zero due to the weak convergence $v_{h_n}\rightharpoonup v$. For the first one we use that $v_{h_n}-v$ is bounded in $H^{1/2}(\Gamma)$ and the strong convergence $\mH_{h_n}^\star g\to\mH^\star g$ in $H^{-1/2}(\Gamma)$, which follows from Lemma \ref{MML5x}.

We show next that $\eta_{h_n}(\mH_{h_n}v_{h_n})\rightharpoonup \eta_{\mH v}$ weakly in $H^1_0(\Omega)$ as $n\to\infty$.
From the error estimate \eqref{eq::erreta}, the stability result \eqref{eq::boundHhuhL2}, and the fact that $\Vert v_{h_n}\Vert_{H^{1/2}(\Gamma)} = 1$, we deduce that \[\Vert \eta_h(\mH_{h_n}v_{h_n})-\eta_{_{\mH_{h_n}v_{h_n}}}
\Vert_{H^1_0(\Omega)} \to 0\text{ as }n\to\infty.\] On the other hand, for every $g\in H^{-1}(\Omega)$, 
\begin{align*}
\langle g, \eta_{_{\mH_{h_n}v_{h_n} - \mH v}}\rangle_\Omega & = \mathfrak{a}(\eta_{_{\mH_{h_n}v_{h_n} - \mH v}},\phi_g) \\
&= -\langle \mA (  \mH_{h_n}v_{h_n} - \mH v),\phi_g \rangle_\Omega  = -\langle   \mH_{h_n}v_{h_n} - \mH v, \mA^\star\phi_g \rangle_\Omega
\end{align*}
which tends to zero since we have already proved that $\mH_{h_n}v_{h_n}\rightharpoonup \mH v$ weakly in $H^1(\Omega)$ as $n\to\infty$. Therefore $\eta_{h_n}(\mH_{h_n}v_{h_n})\rightharpoonup \eta_{\mH v}$ weakly in $H^1(\Omega)$ as $n\to\infty$.

Using that $H^1(\Omega)$ is compactly embedded in $L^2(\Omega)$, we obtain that $\eta_{h_n}(\mH_{h_n}v_{h_n}) + \mH_{h_n} v_{h_n}\to \eta_{_{\mH v}} + \mH v$ strongly in $L^2(\Omega)$, and by \eqref{eq::contr_assumpt}, $\eta_{_{\mH v}} + \mH v =0$. This, together with the fact that $\eta_{_{\mH v}}\in H^1_0(\Omega)$, implies that $v=0$. So $\mH_{h_n} v_{h_n}\to 0$ in $L^2(\Omega)$. Since we have by assumption \eqref{eq::contr_assumpt} that $\nabla \mH_{h_n} v_{h_n}\to 0$ in $L^2(\Omega)$, then $\Vert\mH_{h_n} v_{h_n}\Vert_{H^1(\Omega)}\to 0$. But, using \eqref{E3.1}
and the fact that  $(\mH_{h_n} v_{h_n})_{\vert\Gamma} \equiv  v_{h_n}$ on $\Gamma$, we have that
\[1 = \Vert v_{h_n}\Vert_{H^{1/2}(\Gamma)} \leq M_{\tr} \Vert\mH_{h_n} v_{h_n}\Vert_{H^1(\Omega)}\to 0\]
and we obtain a contradiction.
\end{proof}
\begin{remark}\label{re::R6.4}In \cite[Theorem 3.2]{OfPhanSteinbach2015}, the conclusion of Theorem \ref{L6.1} is formulated as an assumption.
In \cite[Remark 2.5]{PalGudi2026} and \cite[Lemma 3.9]{OfPhanSteinbach2015} similar statements to that of Theorem \ref{L6.1} are done, but with constants depending on $h$. 
In the proofs of \cite[Theorem 5.7]{GongMateosSinglerZhang2022} and \cite[Lemma 12]{Winkler_NM_2020}, this result is used without a complete justification.
\end{remark}

\begin{remark}\label{re::R6.5}
A consequence of Lemma \ref{Le::contHh} and Theorem \ref{L6.1} applied to the case $\mA = -\Delta$, is that $\Vert \mH u_h\Vert_{H^1(\Omega)}$ and $\Vert\mH_h u_h\Vert_{H^1(\Omega)}$ are equivalent norms in $U_h$, with equivalence constants independent of $h$. Following the same argument as in the proof of Lemma \ref{le::Lemma3.1} leads to the equivalence of the seminorms  $\Vert \nabla \mH u_h\Vert_{L^2(\Omega)}$ and $\Vert\nabla\mH_h u_h\Vert_{L^2(\Omega)}$.
\end{remark}

Gathering the above considerations, the following result is immediate.
\begin{corollary}Assume $y_d\in L^2(\Omega)$ and $u_d\in H^{1/2}(\Gamma)$. Then, there exists a $h_{\mA}>0$ such that problem \Pbh has a unique solution $\bar u_h\in U_h$. In addition, there exist unique functions $\bar y_h\in Y_h$ and $\bar\varphi_h\in Y_{0,h}$ such that, for every extension operator $\mE_h$, we have
\begin{subequations}
\begin{align}
\mathfrak{a}(\bar y_h,\zeta_h) &= 0\ \forall \zeta_h\in Y_{0,h},\ \bar y_h\equiv \bar u_h\text{ on }\Gamma,\\
\mathfrak{a}(\zeta_h,\bar \varphi_h) &= (\bar y_h-y_d,\zeta_h)_{\Omega}\ \forall \zeta_h\in Y_{0,h},\\
 -\mathfrak{a}(\mE_h v_h,\bar\varphi_h) & + (\bar y_h,\mE_h v_h)_\Omega + \tikhonov (\nabla \mH_h \bar u_h,\nabla\mE_h v_h)_\Omega \notag \\
 & =
 (y_d,\mE_h v_h)_\Omega +\tikhonov (\nabla \mH_h u_d,\nabla\mE_h v_h)_\Omega\ \forall v_h\in U_h.
\end{align}
\end{subequations} 
Moreover, we can write
\begin{equation}\label{eq::fooch}
    \langle (\mS_h^\star\mS_h + \tikhonov\mD_h)\bar u_h,v_h\rangle = \langle w_h,v_h\rangle \ \forall v_h\in U_h.
\end{equation}
  
\end{corollary}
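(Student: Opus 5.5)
The plan is to repeat, at the discrete level, the argument used for the continuous problem. Fix $0<h<h_{\mA}$, with $h_{\mA}$ as in Lemma \ref{L5.1} and Theorem \ref{th::T5.17}, so that $\mS_h$, $\mD_h$, $\phi_h(\cdot)$ and hence $\functional_h$ are well defined on $H^{1/2}(\Gamma)$. By construction $\functional_h$ is a continuous quadratic functional. Its Hessian $\functional_h''(u)v_h^2=\langle \mT_h v_h,v_h\rangle_\Gamma$ does not depend on $u$, and by Theorem \ref{L6.1} it is bounded below by $\nu^\star\Vert v_h\Vert^2_{H^{1/2}(\Gamma)}$ on $U_h$.

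\emph{Step 1: existence and uniqueness of $\bar u_h$.} The restriction of $\functional_h$ to the finite dimensional space $U_h$ is a quadratic function whose quadratic part is positive definite. It is therefore strictly convex and coercive, and it has exactly one minimizer $\bar u_h\in U_h$. Equivalently, the linear system \eqref{eq::fooch} has a symmetric positive definite matrix.

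\emph{Step 2: the discrete state and adjoint.} Set $\bar y_h=y_h(\bar u_h)=\mS_h\bar u_h$. Theorem \ref{th::T5.17} gives that it is the unique solution of \eqref{eq::discrete_state_eqn}. Since $\mQ_h\bar u_h=\bar u_h$, its boundary condition is $\bar y_h\equiv\bar u_h$ on $\Gamma$. Next set $\bar\varphi_h=\varphi_h(\bar u_h)=\phi_h(\bar y_h-y_d)\in Y_{0,h}$, which is well defined and unique by \eqref{eq::phih} and Lemma \ref{L5.1} applied to the transposed system.

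\emph{Step 3: the optimality condition.} Since $U_h$ is a linear space, optimality means $\functional_h'(\bar u_h)v_h=0$ for all $v_h\in U_h$. Inserting formula \eqref{eq::discretederivative} for an arbitrary discrete extension operator $\mE_h$ gives the third equation once the $y_d$ and $u_d$ terms are moved to the right-hand side. This uses $\mH_h(\bar u_h-u_d)=\mH_h\bar u_h-\mH_h u_d$, which holds because $\mH_h$ is linear. The same condition written with \eqref{OCh} is exactly \eqref{eq::fooch}.

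The only point needing care is the phrase ``for every extension operator $\mE_h$''. This is handled by the independence of \eqref{eq::discretederivative}, and of $\mD_h$, from the choice of discrete extension, which was shown in the remark after the definition of $\mD_h$ and in the derivation of \eqref{eq::SHstar}. I expect no real obstacle, since the substantive ingredient, the uniform coercivity of Theorem \ref{L6.1}, is already available.
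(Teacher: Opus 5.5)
Your proposal is correct and follows the route the paper has in mind when it calls the result immediate. Existence and uniqueness of $\bar u_h$ come from the positive definiteness of $\functional_h''$ on $U_h$ (Theorem \ref{L6.1}); $\bar y_h$ and $\bar\varphi_h$ come from Theorem \ref{th::T5.17} and Lemma \ref{L5.1}; and the optimality system comes from \eqref{eq::discretederivative} and \eqref{OCh}, which hold for every discrete extension operator $\mE_h$. The uniform bound of Theorem \ref{L6.1} is more than you need at fixed $h$, where positive definiteness on $U_h$ can be checked directly, but using it is perfectly fine.
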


Let $\bar u\in H^{1/2}(\Gamma)$ and $\bar u_h$ be respectively the solutions of \Pb and $\Pbh$. Due to \eqref{OC} and \eqref{eq::fooch} the first order optimality condition for the continuous and the discrete problem read as:
\begin{align}\label{eq::firstorder}\langle \mT \bar u, v \rangle_\Gamma &= \langle w ,v\rangle_\Gamma\  \forall v\in H^{1/2}(\Gamma).\\
\label{eq::firstorderh}\langle \mT_h \bar u_h, v_h \rangle_\Gamma &= \langle w_h ,v_h\rangle_\Gamma\ \forall v_h\in U_h.\end{align}
We introduce now an intermediate discrete control. Define the finite element solution of \eqref{eq::firstorder} as the unique $u_h^\star\in U_h$ such that
\begin{equation}\label{eq::uhstar}\langle \mT   u_h^\star, v_h \rangle_\Gamma = \langle w ,v_h\rangle_\Gamma\ \forall v_h\in U_h.\end{equation}

\begin{lemma}The estimate
\begin{equation}\label{eq::estimateustar}
\Vert\bar u-u_h^\star\Vert_{H^{1/2}(\Gamma)} \leq  \frac{M_{\mT} M_{\tr} c_{\mI}}{\nu} h^s\Vert\bar u\Vert_{W^{3/2,2}_{\vec\beta}(\Gamma)}
\end{equation}
holds, where $\vec\beta$ satisfies $\beta_j\geq 0$ and $1-\lambda_j < \beta_j<1$ for all $j\in\{1,\ldots,m\}$ and $s\leq 1$ satisfies $s \leq (1-\beta_j)/\mu_j$ for all $j\in\{1,\ldots,m\}$.
\end{lemma}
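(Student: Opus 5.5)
This is a C\'ea-type argument for the Galerkin approximation \eqref{eq::uhstar} of \eqref{eq::firstorder}. It combines coercivity of $\mT$ from Lemma \ref{L3.3} with Galerkin orthogonality and the interpolation estimate of Lemma \ref{L5.1interpolation}.

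First, Lemma \ref{L3.2} gives $\langle \mT v,v\rangle_\Gamma=\functional''(u)v^2$. Lemma \ref{L3.3} then yields $\langle \mT v,v\rangle_\Gamma\ge \nu\Vert v\Vert^2_{H^{1/2}(\Gamma)}$ for all $v\in H^{1/2}(\Gamma)$. Existence and uniqueness of $u_h^\star$ follow by Lax--Milgram on the finite-dimensional space $U_h$. Next, subtract \eqref{eq::uhstar} from \eqref{eq::firstorder} tested with $v=v_h\in U_h$. This gives the Galerkin orthogonality $\langle \mT(\bar u-u_h^\star),v_h\rangle_\Gamma=0$ for all $v_h\in U_h$.

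Since $\bar u\in W^{3/2,2}_{\vec\beta}(\Gamma)\hookrightarrow C(\Gamma)$ by Theorem \ref{th::T4.14}, the interpolant $\mI_h\bar u\in U_h$ is well defined. Take $v_h=u_h^\star-\mI_h\bar u$ in the orthogonality relation and compute
\begin{align*}
\nu\Vert \bar u-u_h^\star\Vert^2_{H^{1/2}(\Gamma)}&\le \langle \mT(\bar u-u_h^\star),\bar u-u_h^\star\rangle_\Gamma = \langle \mT(\bar u-u_h^\star),\bar u-\mI_h\bar u\rangle_\Gamma\\
&\le M_{\mT}\Vert \bar u-u_h^\star\Vert_{H^{1/2}(\Gamma)}\Vert \bar u-\mI_h\bar u\Vert_{H^{1/2}(\Gamma)}.
\end{align*}
Dividing by $\Vert \bar u-u_h^\star\Vert_{H^{1/2}(\Gamma)}$ gives $\Vert\bar u-u_h^\star\Vert_{H^{1/2}(\Gamma)}\le \frac{M_{\mT}}{\nu}\Vert\bar u-\mI_h\bar u\Vert_{H^{1/2}(\Gamma)}$.

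To finish, apply \eqref{eq:interpolation_error_Gamma}, whose hypotheses $0\le\beta_j<1$, $s\le1$ and $s\le(1-\beta_j)/\mu_j$ are exactly those of the statement. This gives $\Vert\bar u-\mI_h\bar u\Vert_{H^{1/2}(\Gamma)}\le M_{\tr}c_{\mI}h^s\Vert\bar u\Vert_{W^{3/2,2}_{\vec\beta}(\Gamma)}$, which is the claimed constant. There is no real obstacle; the only points to check are that $\bar u$ is regular enough for $\mI_h\bar u$ to make sense, which the embedding above guarantees, and that $\nu$ and $M_{\mT}$ are the continuous constants, so no $h$-dependence enters.
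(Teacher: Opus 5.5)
Your proposal is correct and follows essentially the same route as the paper: Céa's lemma, obtained from the coercivity of $\mT$ (Lemma \ref{L3.3}) and its continuity with constant $M_{\mT}$, combined with the interpolation estimate \eqref{eq:interpolation_error_Gamma} applied to $\mI_h\bar u$. The only difference is presentational: you write out the Galerkin-orthogonality argument explicitly with $v_h=\mI_h\bar u$, where the paper quotes Céa's bound with the infimum over $U_h$.
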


\begin{proof}
Thanks to Lemma \ref{L3.3}, we can apply the C\'{e}a Lemma. Taking into account the regularity of $\bar u$ we can use the interpolation error estimate \eqref{eq:interpolation_error_Gamma} to obtain
\[
\Vert\bar u-u_h^\star\Vert_{H^{1/2}(\Gamma)} \leq \frac{M_{\mT}}{\nu} \inf_{v_h\in U_h} \Vert\bar u-v_h\Vert_{H^{1/2}(\Gamma)} \leq \frac{M_{\mT} M_{\tr} c_{\mI}}{\nu} h^s\Vert\bar u\Vert_{W^{3/2,2}_{\vec\beta}(\Gamma)}
\]
and the result follows.
\end{proof}

\begin{lemma}\label{le::L6.6}
    Let $\nu^\star>0$ be the constant independent of $h$ found in Theorem \ref{L6.1}. Then it holds
    \begin{align}
    \label{eq::generalestimate}
    \nu^\star\Vert u_h^\star-\bar u_h\Vert_{H^{1/2}(\Gamma)}^2 \leq &
    \langle (\mT_h-\mT)\bar u, u_h^\star-\bar u_h\rangle_\Gamma+
    \langle w-w_h, u_h^\star-\bar u_h\rangle_\Gamma\\
    & +
    \langle (\mT_h-\mT)(u_h^\star-\bar u), u_h^\star-\bar u_h\rangle_\Gamma.\notag
    \end{align}
\end{lemma}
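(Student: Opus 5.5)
Set $e_h := u_h^\star-\bar u_h\in U_h$. Since $\functional_h$ is quadratic, $\functional_h''(u)v^2=\langle \mT_h v,v\rangle_\Gamma$ for every $u$. Theorem \ref{L6.1} applied with $v_h=e_h$ therefore gives
\[
\nu^\star\Vert e_h\Vert_{H^{1/2}(\Gamma)}^2\le \langle \mT_h e_h,e_h\rangle_\Gamma = \langle \mT_h u_h^\star,e_h\rangle_\Gamma-\langle \mT_h\bar u_h,e_h\rangle_\Gamma .
\]
We now remove the discrete control $\bar u_h$ from the right-hand side. Because $e_h\in U_h$, it is an admissible test function in the discrete optimality condition \eqref{eq::firstorderh}. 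This replaces $\langle \mT_h\bar u_h,e_h\rangle_\Gamma$ by $\langle w_h,e_h\rangle_\Gamma$.

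Next we reinsert the continuous operator by adding and subtracting $\mT$:
\[
\langle \mT_h u_h^\star,e_h\rangle_\Gamma=\langle (\mT_h-\mT)u_h^\star,e_h\rangle_\Gamma+\langle \mT u_h^\star,e_h\rangle_\Gamma .
\]
The last term equals $\langle w,e_h\rangle_\Gamma$ by the definition \eqref{eq::uhstar} of $u_h^\star$, again because $e_h\in U_h$ is admissible. Collecting terms, the right-hand side becomes
\[
\langle (\mT_h-\mT)u_h^\star,e_h\rangle_\Gamma+\langle w-w_h,e_h\rangle_\Gamma .
\]

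Finally we split $u_h^\star=\bar u+(u_h^\star-\bar u)$ in the first term, using linearity of $\mT_h-\mT$. This gives exactly the three terms in \eqref{eq::generalestimate}.

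There is no genuine obstacle; the argument is purely algebraic. The only points to check are that the identity $\functional_h''(u)v_h^2=\langle\mT_h v_h,v_h\rangle_\Gamma$ follows from \eqref{OCh}, and that both \eqref{eq::firstorderh} and \eqref{eq::uhstar} may be tested with $e_h$ because it lies in $U_h$.
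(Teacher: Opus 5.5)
Your proof is correct and matches the paper's argument: you apply the coercivity of Theorem \ref{L6.1} to $u_h^\star-\bar u_h\in U_h$, use \eqref{eq::firstorderh} and \eqref{eq::uhstar} after adding and subtracting $\mT u_h^\star$, and finish by splitting $u_h^\star=\bar u+(u_h^\star-\bar u)$.
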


\begin{proof}
    Using Theorem \ref{L6.1} and noticing that $\functional_h''(u)v_h^2 = \langle \mT_h v_h,v_h\rangle_\Gamma$ for all $v_h\in U_h$ we have that for some $c>0$
    \begin{align*}
        \nu^\star\Vert u_h^\star-\bar u_h\Vert_{H^{1/2}(\Gamma)}^2  &\leq
        \langle \mT_h(u_h^\star-\bar u_h), u_h^\star-\bar u_h\rangle_\Gamma\\
        &=\langle (\mT_h-\mT)u_h^\star, u_h^\star-\bar u_h\rangle_\Gamma+
        \langle w-w_h,u_h^\star-\bar u_h\rangle_\Gamma \\
        &=\langle (\mT_h-\mT)\bar u , u_h^\star-\bar u_h\rangle_\Gamma+
        \langle w-w_h,u_h^\star-\bar u_h\rangle_\Gamma\\ &\qquad+
        \langle (\mT_h-\mT)(u_h^\star-\bar u) , u_h^\star-\bar u_h\rangle_\Gamma,
    \end{align*}
where in the first equality we have used both $\eqref{eq::firstorderh}$ and \eqref{eq::uhstar}.
\end{proof}

\medskip
Next we estimate each of the terms of the right hand side of \eqref{eq::generalestimate}, starting with the third one.

\begin{lemma}\label{le::L6.5}For every $v_h\in U_h$, the following estimate holds:
    \[\langle (\mT_h-\mT)(u_h^\star-\bar u) , v_h\rangle_\Gamma\leq 2 M_{\mT} M_{\tr} c_{\mI} h^s  \Vert \bar u\Vert _{W^{3/2,2}_{\vec\beta}(\Gamma)} \Vert v_h\Vert_{H^{1/2}(\Gamma)}
    \]
     where  $\vec\beta$ satisfies $\beta_j\geq 0$ and $1-\lambda_j < \beta_j<1$ for all $j\in\{1,\ldots,m\}$ and $s\leq 1$ satisfies $s \leq (1-\beta_j)/\mu_j$ for all $j\in\{1,\ldots,m\}$.
\end{lemma}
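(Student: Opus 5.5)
This estimate is a direct consequence of the uniform boundedness of $\mT$ and $\mT_h$ together with the estimate \eqref{eq::estimateustar} for $u_h^\star$. No finite element error analysis of $\mT_h-\mT$ is needed, because the argument $u_h^\star-\bar u$ is already small.

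First, for every $v_h\in U_h$ the triangle inequality gives
\[
\langle (\mT_h-\mT)(u_h^\star-\bar u),v_h\rangle_\Gamma \le \left(\Vert \mT_h\Vert_{\mL(H^{1/2}(\Gamma),H^{-1/2}(\Gamma))}+\Vert\mT\Vert_{\mL(H^{1/2}(\Gamma),H^{-1/2}(\Gamma))}\right)\Vert u_h^\star-\bar u\Vert_{H^{1/2}(\Gamma)}\Vert v_h\Vert_{H^{1/2}(\Gamma)}.
\]
With the abuse of notation adopted before Theorem \ref{L6.1}, both operator norms are bounded by $M_{\mT}$, independently of $h$. For $\mT$ this follows from $M_{\mT}\le M_{\mS}^2+\tikhonov M_{\mD}$. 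For $\mT_h$ it follows from the $h$-uniform bounds on $\mS_h$ and $\mS_h^\star$ (Theorem \ref{th::T5.17} and Remark \ref{re::R5.18}) and on $\mD_h$ (Lemma \ref{Le::contHh}). The prefactor is therefore at most $2M_{\mT}$.

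Second, I insert \eqref{eq::estimateustar}, which holds under exactly the same hypotheses on $\vec\beta$ and $s$ and requires $\bar u\in W^{3/2,2}_{\vec\beta}(\Gamma)$ (Theorem \ref{th::T4.14}). This bounds $\Vert u_h^\star-\bar u\Vert_{H^{1/2}(\Gamma)}$ by $\frac{M_{\mT}M_{\tr}c_{\mI}}{\nu}h^s\Vert\bar u\Vert_{W^{3/2,2}_{\vec\beta}(\Gamma)}$.

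The only delicate point is the constant. Plugged in directly, the previous step gives $2M_{\mT}^2M_{\tr}c_{\mI}\nu^{-1}$ instead of the stated $2M_{\mT}M_{\tr}c_{\mI}$. To match the statement, I would replace the C\'ea argument by an interpolation argument. Since $\mT$ is symmetric and coercive, $u_h^\star$ is the $\langle\mT\cdot,\cdot\rangle$-orthogonal projection of $\bar u$, hence the best approximation in the energy norm. One could also simply absorb $M_{\mT}/\nu$ into the generic constant, which is harmless for the final rate $h^s$. Either way, the order $h^s$ and the dependence on $\Vert\bar u\Vert_{W^{3/2,2}_{\vec\beta}(\Gamma)}\Vert v_h\Vert_{H^{1/2}(\Gamma)}$ follow immediately, and this constant bookkeeping is the only step requiring care.
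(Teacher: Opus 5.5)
Your argument is the paper's proof. It bounds the operator by $2M_{\mT}$ using the $h$-uniform continuity of $\mT$ and $\mT_h$, then inserts \eqref{eq::estimateustar}. You are also right about the constant. The paper's proof does exactly these two steps and, in its last inequality, simply drops the factor $M_{\mT}/\nu$ coming from \eqref{eq::estimateustar}. What is actually established there is the bound with constant $2M_{\mT}^2M_{\tr}c_{\mI}/\nu$.

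Your proposed repair, however, does not recover the stated constant. Set $\Vert v\Vert_{\mT}^2=\langle\mT v,v\rangle_\Gamma$, so that $\nu\Vert v\Vert^2_{H^{1/2}(\Gamma)}\le\Vert v\Vert_{\mT}^2\le M_{\mT}\Vert v\Vert^2_{H^{1/2}(\Gamma)}$. The best-approximation property of the Galerkin projection $u_h^\star$ then gives
\[
\Vert\bar u-u_h^\star\Vert_{H^{1/2}(\Gamma)}\le\nu^{-1/2}\Vert\bar u-\mI_h\bar u\Vert_{\mT}\le\left(\frac{M_{\mT}}{\nu}\right)^{1/2}\Vert\bar u-\mI_h\bar u\Vert_{H^{1/2}(\Gamma)},
\]
and $(M_{\mT}/\nu)^{1/2}\ge 1$ because necessarily $\nu\le M_{\mT}$. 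Symmetry therefore only improves $M_{\mT}/\nu$ to its square root; it does not remove it.

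You could additionally use Galerkin orthogonality, $\langle\mT(u_h^\star-\bar u),v_h\rangle_\Gamma=0$ for all $v_h\in U_h$, to drop the $\mT$ part and replace the factor $2$ by $1$. The resulting constant $M_{\mT}^{3/2}\nu^{-1/2}M_{\tr}c_{\mI}$ is still at most $2M_{\mT}M_{\tr}c_{\mI}$ only if $M_{\mT}\le 4\nu$.

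The correct resolution is your second option. The lemma holds with an $h$-independent constant containing $M_{\mT}/\nu$ (or its square root). That is all Theorem \ref{th::mainTheorem} uses.
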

\begin{proof}
Using the continuity of $\mT$ and of $\mT_h$ and \eqref{eq::estimateustar}
    \begin{align*}
        \langle (\mT_h-\mT)(u_h^\star-\bar u) , v_h\rangle_\Gamma & \leq
        \Vert (\mT_h-\mT)(u_h^\star-\bar u)\Vert_{H^{-1/2}(\Gamma)} \Vert v_h\Vert_{H^{1/2}(\Gamma)} \\
        & \leq 2 M_{\mT} \Vert u_h^\star-\bar u\Vert_{H^{1/2}(\Gamma)} \Vert v_h\Vert_{H^{1/2}(\Gamma)} \\
        & \leq 2 M_{\mT} M_{\tr} c_{\mI}  h^s \Vert \bar u\Vert _{W^{3/2,2}_{\vec\beta}(\Gamma)} \Vert v_h\Vert_{H^{1/2}(\Gamma)}
    \end{align*}
    and the result follows.
\end{proof}

\medskip
We split the first term of the right hand side of \eqref{eq::generalestimate}, $\langle (\mT_h-\mT)\bar u, v_h\rangle_\Gamma = \langle (\mD_h-\mD)\bar u, v_h\rangle_\Gamma + \langle (\mS_h^\star\mS_h-\mS^\star\mS)\bar u, v_h\rangle_\Gamma$, and estimate each of the two terms separately. 
\begin{remark}\label{re::R6.7}
  In the proof of the next two lemmas, we find one of the main differences with the proofs done in \cite[Theorem 1]{Winkler_NM_2020} or in \cite[Theorem 5.7]{GongMateosSinglerZhang2022}. We cannot write $(\mD-\mD_h)\bar u = \mD\bar u - \mQ_h\mD\bar u + \mQ_h\mD\bar u -\mD_h\bar u$ or $(\mS^\star\mS - \mS_h^\star \mS_h)\bar u = \mS^\star\mS \bar u - \mQ_h \mS^\star\mS \bar u +  \mQ_h \mS^\star\mS \bar u -\mS_h^\star \mS_h \bar u$ because $\mD\bar u$ and $\mS^\star\mS \bar u$ belong to $\Wsj$, but not necessarily to $H^{1/2}(\Gamma)$, so $\mQ_h\mD\bar u$ or $\mQ_h \mS^\star\mS \bar u$ may not be well defined.
\end{remark}

\begin{lemma}\label{le::L6.9}Let $C_1>0$ be the constant found in Corollary \ref{C5.8}. Then, 
\[\langle (\mD-\mD_h)\bar u,v_h\rangle_\Gamma \leq C_1 M_{\mH} h^s \Vert \bar u\Vert _{W^{3/2,2}_{\vec\beta}(\Gamma)} \Vert v_h\Vert_{H^{1/2}(\Gamma)}\ \forall v_h\in U_h,\]
 where $\vec\beta$ satisfies $\beta_j\geq 0$ and $1-\lambda_j < \beta_j<1$ for all $j\in\{1,\ldots,m\}$ and $s\leq 1$ satisfies $s \leq (1-\beta_j)/\mu_j$ for all $j\in\{1,\ldots,m\}$.
\end{lemma}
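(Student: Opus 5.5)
The plan is to write both pairings through the harmonic extensions and exploit the fact that, for discrete test functions $v_h\in U_h$, the discrete harmonic extension is itself an admissible discrete extension. By definition of $\mD$ (with any extension operator) and of $\mD_h$ (with any discrete extension operator), we may take the same function $\mH_h v_h\in Y_h$ in both. Since $v_h\in U_h$, we have $\mQ_h v_h=v_h$, so $\mH_h v_h$ has trace $v_h$ on $\Gamma$. Hence $\mH_h v_h$ is simultaneously a legitimate extension of $v_h$ for $\mD$ and a discrete extension for $\mD_h$. Formally, $\mH_h$ restricted to $U_h$ composed with any extension gives an extension operator, and the definition of $\mD$ is independent of the extension chosen, as shown before Lemma \ref{L3.2}. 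This yields
\[
\langle (\mD-\mD_h)\bar u,v_h\rangle_\Gamma = (\nabla \mH\bar u-\nabla\mH_h\bar u,\nabla \mH_h v_h)_\Omega .
\]

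The second step is Cauchy--Schwarz followed by the two available estimates:
\[
\langle (\mD-\mD_h)\bar u,v_h\rangle_\Gamma\le \Vert\nabla\mH\bar u-\nabla\mH_h\bar u\Vert_{L^2(\Omega)}\,\Vert \mH_h v_h\Vert_{H^1(\Omega)} .
\]
The first factor is bounded by Corollary \ref{C5.8}, namely \eqref{eq::err_discharm}, by $C_1h^s\Vert\bar u\Vert_{W^{3/2,2}_{\vec\beta}(\Gamma)}$. The second factor is bounded by Lemma \ref{Le::contHh} (with the renaming of Remark \ref{re::R5.10}) by $M_{\mH}\Vert v_h\Vert_{H^{1/2}(\Gamma)}$. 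Multiplying gives exactly the constant $C_1M_{\mH}$ in the statement.

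The only point requiring care, which I expect to be the main obstacle, is checking that the hypotheses on $\vec\beta$ and $s$ allow Corollary \ref{C5.8} to be invoked. The regularity $\bar u\in W^{3/2,2}_{\vec\beta}(\Gamma)$ follows from Theorem \ref{th::T4.14}, since $1-\lambda_j<\beta_j$. Because $\lambda_j\le\lambda_{\Delta,j}$, we also have $1-\lambda_{\Delta,j}<\beta_j$, which is what Lemma \ref{th::T4.2} needs. For the rate, the proof of Corollary \ref{C5.8} uses the Céa estimate \eqref{eq::Cea} with $y_h=\mI_h\mH\bar u$ and the interpolation estimate \eqref{eq:interpolation_error}. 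That estimate requires precisely $s\le1$ and $s\le(1-\beta_j)/\mu_j$, which are the present assumptions, so the rate $h^s$ is available. Everything else is routine.
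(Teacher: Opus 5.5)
Your proof is correct and follows the paper's argument. The paper writes the difference using $\mH v_h$ and $\mH_h v_h$, then drops the cross term $(\nabla\mH\bar u,\nabla(\mH-\mH_h)v_h)_\Omega$ because $\mH v_h-\mH_h v_h\in H^1_0(\Omega)$. That is exactly your extension-independence step, which is more cleanly stated as: $(\nabla\mH\bar u,\nabla z)_\Omega$ depends only on the trace of $z$, so no auxiliary extension operator is needed. The paper then concludes, as you do, with Cauchy--Schwarz, Corollary \ref{C5.8}, and the $h$-independent stability of $\mH_h$.
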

\begin{proof}
    Using the definition of $\mD$ and $\mD_h$, the fact that both $\mH v_h\equiv v_h$ on $\Gamma$ and $\mH_h v_h\equiv v_h$ on $\Gamma$, which implies that $\mH v_h-\mH_h v_h\in H^1_0(\Omega)$, the definition of harmonic extension, Corollary \ref{C5.8} and Lemma \ref{eq::stabHh1} together with Remark \ref{re::R5.10}, we obtain
    \begin{align*}
        \langle(\mD-\mD_h)\bar u, v_h\rangle_\Gamma & = (\nabla \mH\bar u, \nabla\mH v_h)_\Omega - (\nabla \mH_h\bar u, \nabla\mH_h v_h)_\Omega \\
        & = (\nabla \mH\bar u, \nabla(\mH-\mH_h) v_h)_\Omega + (\nabla \mH\bar u -\nabla \mH_h\bar u, \nabla\mH_h v_h)_\Omega \\
        & =  (\nabla \mH\bar u -\nabla \mH_h\bar u, \nabla\mH_h v_h)_\Omega 
        \leq C_1 M_{\mH} h^s \Vert \bar u\Vert_{W^{3/2,2}_{\vec\beta}(\Gamma)} \Vert v_h\Vert_{H^{1/2}(\Gamma)}.
    \end{align*}
Notice that the relations among $\beta_j$, $\lambda_j$ and $s$ together with the inequality $\lambda_j\leq \lambda_{\Delta,j}$ imply that we can use Corollary \ref{C5.8}.
\end{proof}
\begin{lemma}\label{le::L6.10}
There exists a constant $c_{\eqref{eq::SSerr}}>0$, that may depend on $\vec\mu$ but is independent of $h$, such that for every $v_h\in U_h$
\begin{equation}\label{eq::SSerr}
\langle (\mS^\star\mS - \mS_h^\star \mS_h)\bar u, v_h\rangle_\Gamma \leq c_{\eqref{eq::SSerr}} h^s \Vert \bar u\Vert _{W^{3/2,2}_{\vec\beta}(\Gamma)} \Vert v_h\Vert_{H^{1/2}(\Gamma)}
\end{equation}
 where $\vec\beta$ satisfies $\beta_j\geq 0$ and $1-\lambda_j < \beta_j<1$ for all $j\in\{1,\ldots,m\}$ and $s\leq 1$ satisfies $s \leq (1-\beta_j)/\mu_j$ for all $j\in\{1,\ldots,m\}$.
\end{lemma}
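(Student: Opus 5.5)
The proof rests on the adjoint representations \eqref{MME3.3} and \eqref{eq::SHstar}. We test both with $v=v_h\in U_h$ and use the same discrete extension $\mE_h v_h$ as extension operator in both. This is legitimate: $\mE_h v_h$ is an element of $H^1(\Omega)$ whose trace is $\mQ_h v_h = v_h$, and \eqref{MME3.3} only uses an $H^1$ extension of $v_h$. Writing $g=\bar y=\mS\bar u$ and $g_h=\mS_h\bar u=y_h(\bar u)$, we get
\begin{align*}
\langle (\mS^\star\mS - \mS_h^\star \mS_h)\bar u, v_h\rangle_\Gamma
&= -\mathfrak{a}(\mE_h v_h,\phi_{\bar y}-\phi_h(g_h)) + (\bar y - y_h(\bar u),\mE_h v_h)_\Omega .
\end{align*}
We take $\mE_h=\mH_h$. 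By Lemma \ref{Le::contHh}, $\Vert \mH_h v_h\Vert_{H^1(\Omega)}\le M_{\mH}\Vert v_h\Vert_{H^{1/2}(\Gamma)}$ uniformly in $h$.

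For the second term we use Cauchy--Schwarz and Lemma \ref{le::stateerror}:
\[
(\bar y - y_h(\bar u),\mH_h v_h)_\Omega\le c_{\eqref{eq:.sterrest}} M_{\mH} h^s\Vert\bar u\Vert_{W^{3/2,2}_{\vec\beta}(\Gamma)}\Vert v_h\Vert_{H^{1/2}(\Gamma)} .
\]

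For the first term we bound it by $M_{\mA}M_{\mH}\Vert v_h\Vert_{H^{1/2}(\Gamma)}\Vert \phi_{\bar y}-\phi_h(g_h)\Vert_{H^1_0(\Omega)}$ and split
\[
\phi_{\bar y}-\phi_h(g_h) = \big(\phi_{\bar y}-\phi_h(\bar y)\big) + \phi_h(\bar y - y_h(\bar u)).
\]
The first piece is estimated with \eqref{eq::errphi} and $g=\bar y\in L^2(\Omega)\hookrightarrow L^2_{\vec\beta}(\Omega)$. This gives $c h^s\Vert\bar y\Vert_{L^2(\Omega)}\le c h^s M_{\mS}\Vert\bar u\Vert_{H^{1/2}(\Gamma)}$, which the embedding $W^{3/2,2}_{\vec\beta}(\Gamma)\hookrightarrow H^{1/2}(\Gamma)$ bounds by $c h^s\Vert\bar u\Vert_{W^{3/2,2}_{\vec\beta}(\Gamma)}$. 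The conditions $1-\lambda_j<\beta_j$ and $\lambda_j\le\lambda_{A,j}$ ensure that \eqref{eq::errphi} applies. For the second piece we use the discrete stability \eqref{E6.1}, then $L^2(\Omega)\hookrightarrow H^{-1}(\Omega)$, and again Lemma \ref{le::stateerror}:
\[
\Vert\phi_h(\bar y - y_h(\bar u))\Vert_{H^1_0(\Omega)}\le c_{\mA}c_i\Vert \bar y-y_h(\bar u)\Vert_{L^2(\Omega)}\le c h^s\Vert\bar u\Vert_{W^{3/2,2}_{\vec\beta}(\Gamma)} .
\]
Collecting all contributions gives \eqref{eq::SSerr}.

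The delicate point is the first step: justifying that the \emph{same} function $\mE_h v_h$ can be inserted in both the continuous formula \eqref{MME3.3} and the discrete formula \eqref{eq::SHstar}. Once this is done, the difference of the two adjoints shows up only through $\phi_{\bar y}-\phi_h(g_h)$, and we never need $\mQ_h$ applied to $\mS^\star\mS\bar u$, which is exactly the obstruction noted in Remark \ref{re::R6.7}. The rest is bookkeeping with the already established estimates, all of which hold with order $h^s$ under the stated constraints on $\vec\beta$ and $s$.
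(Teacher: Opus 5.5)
Your proof is correct, but it organizes the difference of the adjoints differently from the paper.

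\textbf{The paper's route.} The paper splits $\mS^\star\mS-\mS_h^\star\mS_h=(\mS^\star-\mS_h^\star)\mS+\mS_h^\star(\mS-\mS_h)$. In the first piece it inserts $\mH v_h$ into \eqref{MME3.3} but $\mH_h v_h$ into \eqref{eq::SHstar}. This yields two terms:
\begin{itemize}
\item the adjoint finite element error $\mathfrak{a}(\mH_h v_h,\phi_h(\bar y)-\phi_{\bar y})$, which is the same as your first piece;
\item a term $(\bar y,\mH v_h-\mH_h v_h)_\Omega$, bounded with the $L^2$ estimate for the discrete harmonic extension, Lemma \ref{L5.9}.
\end{itemize}
The second piece is bounded by $\Vert\bar y-y_h(\bar u)\Vert_{L^2(\Omega)}\Vert\mS_h v_h\Vert_{L^2(\Omega)}$, using Lemma \ref{le::stateerror} and the uniform stability of $\mS_h$ from Theorem \ref{th::T5.17}.

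\textbf{Your route.} You observe that \eqref{MME3.3} holds for any single $H^1$ lift $z$ of $v_h$, because $y_{v_h}=\eta_z+z$ by the uniqueness in Theorem \ref{T2.2}. This lets you use $\mH_h v_h$ in both formulas, so the harmonic-extension error never appears. The price is that the state error enters twice:
\begin{itemize}
\item once directly, tested against $\mH_h v_h$;
\item once through $\phi_h(\bar y-y_h(\bar u))$, controlled by the discrete stability \eqref{E6.1}.
\end{itemize}
By \eqref{eq::SHstar}, these two contributions together are exactly the paper's term $(\bar y-y_h(\bar u),\mS_h v_h)_\Omega$. So inside this lemma you need one ingredient fewer. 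Lemma \ref{L5.9}, with its duality machinery, enters only indirectly through Lemma \ref{le::stateerror}.

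\textbf{What your route reveals about the paper's computation.} The paper drops the term $-\mathfrak{a}(\mH v_h-\mH_h v_h,\phi_{\bar y})$ as if it vanished. It does not: since $\mH v_h-\mH_h v_h\in H^1_0(\Omega)$, \eqref{eq::Sstar} gives that it equals $-(\bar y,\mH v_h-\mH_h v_h)_\Omega$. It therefore cancels the paper's term $B$ exactly, so in the paper's notation $I=A$. The paper bounds $B$ separately anyway, so its final estimate is unaffected.
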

\begin{proof}
We write the expression as
\begin{align*}
\langle (\mS^\star\mS - \mS_h^\star \mS_h)\bar u, v_h\rangle_\Gamma & = 
\langle (\mS^\star\mS - \mS_h^\star \mS)\bar u, v_h\rangle_\Gamma +
\langle (\mS_h^\star\mS - \mS_h^\star \mS_h)\bar u, v_h\rangle_\Gamma \\
 & = \langle (\mS^\star - \mS_h^\star)\bar y, v_h\rangle_\Gamma +
\langle (\mS -  \mS_h)\bar u, \mS_h v_h\rangle_\Omega = I + II
\end{align*} 
We first estimate I. Using the expressions for $\mS$ and $\mS^\star$ obtained in \eqref{MME3.3} and \eqref{eq::SHstar}, the definitions of $\phi$ and $\phi_h$ provided in \eqref{eq::Sstar} and \eqref{eq::phih} and the fact that
$\mH v_h-\mH_h v_h\in H^1_0(\Omega)$
\begin{align*}
    I & = \langle (\mS^\star - \mS_h^\star)\bar y, v_h\rangle_\Gamma \\
    & = -\mathfrak{a}(\mH v_h,\phi_{\bar y}) + \mathfrak{a}(\mH_h v_h,\phi_h(\bar y)) +(\bar y,\mH v_h-\mH_h v_h)_\Omega \\
    & = -\mathfrak{a}(\mH v_h-\mH_h v_h,\phi_{\bar y}) + \mathfrak{a}(\mH_h v_h,\phi_h(\bar y)-\phi_{\bar y}) +(\bar y,\mH v_h-\mH_h v_h)_\Omega \\
    & = \mathfrak{a}(\mH_h v_h,\phi_h(\bar y)-\phi_{\bar y}) +(\bar y,\mH v_h-\mH_h v_h)_\Omega = A+B
\end{align*}
For A, we use the continuity of the bilinear form $\mathfrak{a}(\cdot,\cdot)$, Lemma \ref{Le::contHh} together with Remark \ref{re::R5.10}, the finite element error estimate \eqref{eq::errphi}, the injection $H^1(\Omega)\hookrightarrow L^2_{\vec\beta}(\Omega)$, Theorem \ref{T2.2}, and the injection $W^{3/2,2}_{\vec\beta}(\Gamma)\hookrightarrow H^{1/2}(\Gamma)$
\begin{align*}
    A  &= \mathfrak{a}(\mH_h v_h,\phi_h(\bar y)-\phi_{\bar y}) 
    \leq M_{\mA}\Vert \mH_h v_h\Vert_{H^1(\Omega)} \Vert \phi_h(\bar y)-\phi_{\bar y}\Vert_{H^1(\Omega)} \\
    &\leq  c_{\eqref{eq::erreta}} M_{\mA} M_{\mH} \Vert v_h\Vert_{H^{1/2}(\Gamma)} h^s \Vert \bar y\Vert_{L^2_{\vec\beta}(\Omega)} \\
    &\leq c_i^2 c_{\eqref{eq::erreta}} M_{\mA} M_{\mH} M_{\mS} h^s  \Vert \bar u\Vert_{W^{3/2,2}_{\vec \beta}(\Gamma)} \Vert v_h\Vert_{H^{1/2}(\Gamma)}.
\end{align*}
To estimate B we use \eqref{eq::err:L2Harm} to obtain
\begin{align*}
B &= (\bar y,\mH v_h-\mH_h v_h)_\Omega \leq \Vert \bar y\Vert_{L^2(\Omega)} \Vert \mH v_h-\mH_h v_h\Vert_{L^2(\Omega)} \\ &\leq M_{\mS} c_i^2 C_0 h^s \Vert \bar u\Vert_{W^{3/2,2}_{\vec \beta}(\Gamma)}  \Vert v_h\Vert_{H^{1/2}(\Gamma)}
\end{align*}
Finally, using Lemma \ref{le::stateerror} we obtain directly
\begin{align*}
    II &\leq \Vert \bar y -\bar y_h\Vert_{L^2(\Omega)} \Vert S_h v_h\Vert_{L^2(\Omega)}\\
    & \leq c_{\eqref{eq:.sterrest}} h^s \Vert \bar u \Vert_{W^{3/2,2}_{\vec\beta}(\Gamma)}
    \Vert S_h v_h\Vert_{L^2(\Omega)}  \leq c_{\eqref{eq:.sterrest}} M_{\mS} h^s \Vert \bar u\Vert_{W^{3/2,2}_{\vec \beta}(\Gamma)}  \Vert v_h\Vert_{H^{1/2}(\Gamma)},
\end{align*}
and the proof is complete.
\end{proof}

\medskip
Finally we treat the second term of the right hand side of \eqref{eq::generalestimate}.
\begin{lemma}\label{le::L6.11}
    Assume $y_d\in L^2(\Omega)$ and $u_d\in W^{3/2,2}_{\vec\beta}(\Gamma)$, where $\vec\beta$ satisfies $\beta_j\geq 0$ and $1-\lambda_j < \beta_j<1$ for all $j\in\{1,\ldots,m\}$. Then, there exists $C>0$, that may depend on $\vec\mu$ but is independent of $h$, such that
    \[\langle w- w_h,v_h\rangle_\Gamma \leq C h^s (\Vert y_d\Vert_{L^2(\Omega)} + \Vert u_d\Vert_{W^{3/2,2}_{\vec\beta}(\Gamma)})\Vert v_h\Vert_{H^{1/2}(\Gamma)}\ \forall v_h\in H^{1/2}(\Gamma),\]
    where $s\leq 1$ satisfies $s \leq (1-\beta_j)/\mu_j$ for all $j\in\{1,\ldots,m\}$.
\end{lemma}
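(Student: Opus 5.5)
We split $\langle w-w_h,v_h\rangle_\Gamma = \langle (\mS^\star-\mS_h^\star)y_d,v_h\rangle_\Gamma + \tikhonov\langle(\mD-\mD_h)u_d,v_h\rangle_\Gamma$ and treat the two terms separately. For $v_h\in U_h$ the argument is cleanest; the statement is written for $v_h\in H^{1/2}(\Gamma)$, but since $\mD_h v=\mD_h\mQ_h v$ and $\mS_h v=\mS_h\mQ_h v$, only the continuous parts see $v-\mQ_h v$. We will carry out the estimate for $v_h\in U_h$, which is the case used in \eqref{eq::generalestimate}, and remark that the general case reduces to it up to terms controlled by Theorem \ref{T5.9}.

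The second term is handled exactly as in Lemma \ref{le::L6.9}, with $\bar u$ replaced by $u_d$. Using $\mH v_h-\mH_h v_h\in H^1_0(\Omega)$ and the definition of $\mH$, we get
\[
\langle(\mD-\mD_h)u_d,v_h\rangle_\Gamma=(\nabla(\mH u_d-\mH_h u_d),\nabla\mH_h v_h)_\Omega .
\]
Corollary \ref{C5.8} and Lemma \ref{Le::contHh} then give the bound $C_1M_{\mH}h^s\Vert u_d\Vert_{W^{3/2,2}_{\vec\beta}(\Gamma)}\Vert v_h\Vert_{H^{1/2}(\Gamma)}$. This uses $\lambda_j\le\lambda_{\Delta,j}$ to check the hypotheses on $\vec\beta$ and $s$.

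The first term is the analogue of term I in the proof of Lemma \ref{le::L6.10}, with $\bar y$ replaced by $y_d$. By \eqref{MME3.3} and \eqref{eq::SHstar} with $\mE=\mH$ and $\mE_h=\mH_h$,
\[
\langle(\mS^\star-\mS_h^\star)y_d,v_h\rangle_\Gamma=\mathfrak{a}(\mH_h v_h,\phi_h(y_d)-\phi_{y_d})+(y_d,\mH v_h-\mH_h v_h)_\Omega .
\]
Here we used that $\mathfrak{a}(\mH v_h-\mH_h v_h,\phi_{y_d})=(y_d,\mH v_h-\mH_h v_h)_\Omega$ by \eqref{eq::Sstar}, because the difference lies in $H^1_0(\Omega)$. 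Thus the first contribution cancels exactly as in Lemma \ref{le::L6.10}, up to sign bookkeeping which we will check carefully. We bound the remaining $\mathfrak{a}$-term by $M_{\mA}M_{\mH}\Vert v_h\Vert_{H^{1/2}(\Gamma)}$ times the finite element error \eqref{eq::errphi}, which is at most $c h^s\Vert y_d\Vert_{L^2_{\vec\beta}(\Omega)}\le c h^s\Vert y_d\Vert_{L^2(\Omega)}$ by Lemma \ref{lem:4.1}. We bound the $L^2$ term by $\Vert y_d\Vert_{L^2(\Omega)}C_0h^s\Vert v_h\Vert_{H^{1/2}(\Gamma)}$ using Lemma \ref{L5.9}.

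The only delicate point is checking the admissible ranges. The estimate \eqref{eq::errphi} needs $1-\lambda_{A,j}<\beta_j$, and \eqref{eq::err:L2Harm} needs $s<\lambda_{\Delta,j}/\mu_j$; both follow from $1-\lambda_j<\beta_j$ and $s\le(1-\beta_j)/\mu_j<\lambda_j/\mu_j$. Adding the two bounds, with $C=\max\{c_{\eqref{eq::erreta}}M_{\mA}M_{\mH}c_i+C_0,\ \tikhonov C_1M_{\mH}\}$, yields the claim.
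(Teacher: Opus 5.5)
Your argument is correct and is the paper's own: you split $w-w_h=(\mS^\star-\mS_h^\star)y_d+\tikhonov(\mD-\mD_h)u_d$, treat the $\mD$-part exactly as in Lemma~\ref{le::L6.9} and the $\mS^\star$-part as term I of Lemma~\ref{le::L6.10}, and feed $y_d\in L^2(\Omega)\hookrightarrow L^2_{\vec\beta}(\Omega)$ into \eqref{eq::errphi}. (The paper, like you, really only handles $v_h\in U_h$; your reduction of the general case via Theorem~\ref{T5.9} would also need $w\in\prod_{j=1}^m W^{1/2,2}_{\vec\beta}(\Gamma_j)$, which follows from Lemmas~\ref{le::L4.1} and~\ref{th::T4.2}.)

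One bookkeeping point, which the paper's term I shares: once $\mathfrak{a}(\mH v_h-\mH_h v_h,\phi_{y_d})=(y_d,\mH v_h-\mH_h v_h)_\Omega$ is inserted with the correct signs, the two contributions cancel, and in fact $\langle(\mS^\star-\mS_h^\star)y_d,v_h\rangle_\Gamma=\mathfrak{a}(\mH_h v_h,\phi_h(y_d)-\phi_{y_d})$ exactly. So the term $(y_d,\mH v_h-\mH_h v_h)_\Omega$ in your display is spurious---harmless for the final bound, and Lemma~\ref{L5.9} is not needed there.
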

\begin{proof}
    Noting that $w-w_h  = (\mS^\star-\mS_h^\star)y_d + \tikhonov(\mD-\mD_h)u_d$, the proof follows the same lines as that of Lemma \ref{le::L6.9} and the derivation of the estimate for the term I in the proof of Lemma \ref{le::L6.10}, noting that to estimate the term A we use that $y_d\in L^2(\Omega)\hookrightarrow L^2_{\vec\beta}(\Omega)$ and to estimate the term B we use that $y_d\in L^2(\Omega)$.
\end{proof}

\begin{theorem}\label{th::mainTheorem}Assume $y_d\in L^2(\Omega)$ and $u_d\in W^{3/2,2}_{\vec\beta}(\Gamma)$, where $\vec\beta$ satisfies $\beta_j\geq 0$ and $1-\lambda_j < \beta_j<1$ for all $j\in\{1,\ldots,m\}$. Let $\bar u$ and $\bar u_h$ be  the unique solutions of $\Pb$ and $\Pbh$, respectively. Then, there exist $h_{\mA}>0$ and $C>0$, that may depend on $\vec\mu$ but is independent of $h$, such that, for all $0<h<h_{\mA}$,
    \[\Vert \bar y -\bar y_h\Vert_{H^1(\Omega)} + \Vert \bar u -\bar u_h\Vert_{H^{1/2}(\Gamma)} + \Vert\bar\varphi-\bar\varphi_h\Vert_{H^1_0(\Omega)} \leq C h^s (\Vert y_d\Vert_{L^2(\Omega)} + \Vert u_d\Vert_{W^{3/2,2}_{\vec\beta}(\Gamma)}) \]
    where $s\leq 1$ satisfies $s \leq (1-\beta_j)/\mu_j$ for all $j\in\{1,\ldots,m\}$.
\end{theorem}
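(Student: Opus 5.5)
The control estimate comes first. I would split $\bar u-\bar u_h=(\bar u-u_h^\star)+(u_h^\star-\bar u_h)$. The first part is bounded by \eqref{eq::estimateustar}. For the second, I would apply Lemma \ref{le::L6.6} with $v_h=u_h^\star-\bar u_h\in U_h$ and bound its three right-hand terms as follows:
\begin{itemize}
\item $\langle (\mT_h-\mT)\bar u,v_h\rangle_\Gamma$ is split into a $\tikhonov(\mD_h-\mD)$ part and a $(\mS_h^\star\mS_h-\mS^\star\mS)$ part, handled by Lemmas \ref{le::L6.9} and \ref{le::L6.10};
\item $\langle w-w_h,v_h\rangle_\Gamma$ is handled by Lemma \ref{le::L6.11};
\item the remaining term is handled by Lemma \ref{le::L6.5}.
\end{itemize}
Each bound has the form $Ch^s\Vert\bar u\Vert_{W^{3/2,2}_{\vec\beta}(\Gamma)}\Vert v_h\Vert_{H^{1/2}(\Gamma)}$, or the analogous one with the data. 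Dividing by $\Vert v_h\Vert_{H^{1/2}(\Gamma)}$ and using Theorem \ref{th::T4.14} to replace $\Vert\bar u\Vert_{W^{3/2,2}_{\vec\beta}(\Gamma)}$ by $c(\Vert y_d\Vert_{L^2(\Omega)}+\Vert u_d\Vert_{W^{3/2,2}_{\vec\beta}(\Gamma)})$ then gives the bound on $\Vert\bar u-\bar u_h\Vert_{H^{1/2}(\Gamma)}$. Here Assumption \ref{A:u_d} is replaced by the hypothesis on $u_d$ for this particular $\vec\beta$, which is all that proof uses.

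For the state, I would write $\bar y-\bar y_h=(\mS\bar u-\mS_h\bar u)+\mS_h(\bar u-\bar u_h)$. The second piece is bounded in $H^1(\Omega)$ by $M_{\mS}\Vert\bar u-\bar u_h\Vert_{H^{1/2}(\Gamma)}$ by Theorem \ref{th::T5.17}. For the first piece I would use the decompositions $\mS\bar u=\eta_{\mH\bar u}+\mH\bar u$ and $\mS_h\bar u=\eta_h(\mH_h\bar u)+\mH_h\bar u$. Then:
\begin{itemize}
\item $\Vert\mH\bar u-\mH_h\bar u\Vert_{H^1(\Omega)}$ is $O(h^s)$ by Corollary \ref{C5.8} and Lemma \ref{L5.9}.
\item $\eta_{\mH\bar u}-\eta_h(\mH_h\bar u)$ splits as $(\eta_{\mH\bar u}-\eta_h(\mH\bar u))+\eta_h(\mH\bar u-\mH_h\bar u)$. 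The second piece is $O(h^s)$ by Lemma \ref{L5.1} and the previous item.
\item The first piece is a finite element error with right-hand side $f=-\mA\mH\bar u$. Since $\mH\bar u\in W^{2,2}_{\vec\beta}(\Omega)$ by Lemma \ref{th::T4.2} and the coefficients are smooth (Assumption \ref{A2.1}), I expect $f\in L^2_{\vec\beta}(\Omega)$, so \eqref{eq::erreta} gives $O(h^s)$.
\end{itemize}
If that regularity is awkward near the corners, I would instead use Céa's lemma together with $\eta_{\mH\bar u}=\bar y-\mH\bar u$ and the regularity $\bar y\in W^{2,2}_{\vec\beta}(\Omega)$, obtained by writing $\bar y$ as the solution of the homogenised problem.

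For the adjoint state, I would split $\bar\varphi-\bar\varphi_h=(\phi_{\bar y-y_d}-\phi_h(\bar y-y_d))+\phi_h(\bar y-\bar y_h)$. The first piece is $O(h^s)$ in $H^1_0(\Omega)$ by \eqref{eq::errphi}, since $\bar y-y_d\in L^2(\Omega)\hookrightarrow L^2_{\vec\beta}(\Omega)$, and its norm there is bounded by the data through \eqref{eq::est.u.H12}. The second piece is bounded by $c_{\mA}\Vert\bar y-\bar y_h\Vert_{L^2(\Omega)}$ by \eqref{E6.1}, which is already controlled by the state estimate.

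The main obstacle is the term $\eta_{\mH\bar u}-\eta_h(\mH\bar u)$ in the state estimate, specifically verifying that $\mA\mH\bar u\in L^2_{\vec\beta}(\Omega)$. This requires $b\cdot\nabla\mH\bar u$ and $a_0\mH\bar u$ to lie in the weighted $L^2$ space and a divergence form of the principal part. I would first try the regularity route via \eqref{eq:H2V2}, and fall back on the Céa-plus-regularity-of-$\bar y$ argument if that fails.
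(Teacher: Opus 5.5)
Your proposal is correct. The control and adjoint parts follow the paper's proof essentially verbatim:
\begin{itemize}
\item for the control, you insert $u_h^\star$, apply Lemma~\ref{le::L6.6} with $v_h=u_h^\star-\bar u_h$, bound the terms by Lemmas~\ref{le::L6.5}, \ref{le::L6.9}, \ref{le::L6.10}, \ref{le::L6.11}, and close with Theorem~\ref{th::T4.14};
\item for the adjoint, you combine \eqref{eq::errphi} with \eqref{E6.1}.
\end{itemize}
You are also right that the proof of Theorem~\ref{th::T4.14} only uses $u_d\in W^{3/2,2}_{\vec\beta}(\Gamma)$ for the given $\vec\beta$.

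The genuine difference is the state. The paper uses your split $\bar y-\bar y_h=(\bar y-y_h(\bar u))+\mS_h(\bar u-\bar u_h)$. However, it bounds $\Vert\bar y-y_h(\bar u)\Vert_{H^1(\Omega)}$ by citing Lemma~\ref{le::stateerror}, which is stated and proved only in $L^2(\Omega)$. Its term~I goes through the duality bound \eqref{eq::erretaL2_bis}, which does not control $\nabla(\eta_z-\eta_h(z))$.

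Your three-piece decomposition supplies the missing energy-norm argument.
\begin{itemize}
\item The pieces $\mH\bar u-\mH_h\bar u$ and $\eta_h(\mH\bar u-\mH_h\bar u)$ are bounded in $H^1$ by Corollary~\ref{C5.8}, Lemma~\ref{L5.9} and Lemma~\ref{L5.1}. The latter is just term~II of Lemma~\ref{le::stateerror}, which the paper already estimates in $H^1_0(\Omega)$.
\item For $\eta_{\mH\bar u}-\eta_h(\mH\bar u)$, the energy estimate \eqref{eq::erreta} with $f=-\mA\mH\bar u$ is the right tool.
\end{itemize}

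The regularity you flag is not a real obstacle. Since $\beta_j>1-\lambda_j\ge 1-\lambda_{\Delta,j}$, Lemma~\ref{th::T4.2} gives $\mH\bar u\in W^{2,2}_{\vec\beta}(\Omega)$. The principal part is already in divergence form, so against test functions in $H^1_0(\Omega)$ one has $f=\nabla\cdot(A\nabla\mH\bar u)-b\cdot\nabla\mH\bar u-a_0\mH\bar u$ with no boundary term. This $f$ lies in $L^2_{\vec\beta}(\Omega)$ with norm at most $c\Vert\bar u\Vert_{W^{3/2,2}_{\vec\beta}(\Gamma)}$ as soon as, e.g., $A$ is Lipschitz and $b,a_0$ are bounded. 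If the coefficients are singular at the corners, the term to watch is $a_0\mH\bar u$, since $\mH\bar u$ need not vanish at a corner; a hypothesis like $a_0\in L^2_{\vec\beta}(\Omega)$ from Remark~\ref{R2.2} covers it.

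Your fallback via C\'ea plus $\bar y\in W^{2,2}_{\vec\beta}(\Omega)$ is not independent of this. Proving that regularity by homogenization needs the same membership $\mA\mE\bar u\in L^2_{\vec\beta}(\Omega)$ for some extension $\mE$, so keep the primary route.

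In short: same result and same skeleton, but your state argument actually proves the $H^1(\Omega)$ bound that the paper's citation only delivers in $L^2(\Omega)$.
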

\begin{proof}
    By the triangle inequality
    \[\Vert \bar u -\bar u_h\Vert_{H^{1/2}(\Gamma)} \leq \Vert \bar u - u^\star_h\Vert_{H^{1/2}(\Gamma)} + \Vert  u_h^\star -\bar u_h\Vert_{H^{1/2}(\Gamma)}.\]
    For the first term we apply estimate \eqref{eq::estimateustar}. To estimate the second one, we apply Lemma \ref{le::L6.6} to obtain a bound of $\Vert  u_h^\star -\bar u_h\Vert_{H^{1/2}(\Gamma)}^2$ in terms of the three addends in the right hand of \eqref{eq::generalestimate}. Each of these terms can be estimated using Lemmata \ref{le::L6.5}, \ref{le::L6.9}, \ref{le::L6.10} and \ref{le::L6.11} for the specific value $v_h= u_h^\star-\bar u_h$. After simplifying $\Vert  u_h^\star -\bar u_h\Vert_{H^{1/2}(\Gamma)}$ at both sides of the resulting inequality, we  obtain the existence of a constant $C>0$ independent of $h$ such that
    \[\Vert  u_h^\star -\bar u_h\Vert_{H^{1/2}(\Gamma)}\leq C h^s (\Vert \bar u\Vert_{W^{3/2,2}_{\vec\beta}(\Gamma)}+\Vert y_d\Vert_{L^2(\Omega)} + \Vert u_d\Vert_{W^{3/2,2}_{\vec\beta}(\Gamma)}).\]
    The estimate for the control follows from the stability of the solution with respect to the data, namely Theorem \ref{th::T4.14}.

    To obtain the estimate for the state variable, we apply Lemma \ref{le::stateerror}, estimate \eqref{E5.5} together with Remark \ref{re::R5.18}, the just obtained error estimate for the optimal control and Theorem \ref{th::T4.14}, we obtain 
    \begin{align*}
        \Vert \bar y -\bar y_h\Vert_{H^1(\Omega)}  & \le\Vert \bar y -y_h(\bar u)\Vert_{H^1(\Omega)} + \Vert  y_h(\bar u) -\bar y_h\Vert_{H^1(\Omega)} \\
        &\leq c_{\eqref{eq:.sterrest}} h^s \Vert \bar u\Vert_{W^{3/2,2}_{\vec\beta}(\Gamma)} + M_{\mS}\Vert \bar u -\bar u_h\Vert_{H^{1/2}(\Gamma)}\\ &\leq C h^s (\Vert y_d\Vert_{L^2(\Omega)} + \Vert u_d\Vert_{W^{3/2,2}_{\vec\beta}(\Gamma)})
    \end{align*}
 For the error estimate for the adjoint state variable we use the error estimate \eqref{eq::errphi} together with the embedding $L^2(\Omega)\hookrightarrow L^2_{\vec\beta}(\Omega)$ and the continuity estimate \eqref{E6.1} together with the embedding $\dualH\hookrightarrow H^1(\Omega)$ to obtain
    \begin{align*}
        \Vert\bar\varphi-\bar\varphi_h\Vert_{H^1_0(\Omega)} & = \Vert \phi_{\bar y -y_d} - \phi_h(\bar y_h-y_d)\Vert_{H^1_0(\Omega)} \\
        & \leq 
        \Vert \phi_{\bar y -y_d} - \phi_h(\bar y-y_d)\Vert_{H^1_0(\Omega)} + \Vert \phi_h(\bar y ) - \phi_h(\bar y_h\Vert_{H^1_0(\Omega)} \\
        & \leq c_i c_{\eqref{eq::erreta}} h^s \Vert \bar y - y_d\Vert_{L^2(\Omega)} + c_i c_{\mA} \Vert \bar y -\bar y_h\Vert_{H^1(\Omega)}
    \end{align*}
    and the estimate follows from the one just obtained for the state variable.
\end{proof}

\section{Numerical examples}\label{S7}
\subsection{Some computational details}
For a fixed triangulation $\mathcal{K}_h$ with nodes $\{x_k\}_{k=1}^N$, we consider the usual hat basis functions $\{\psi_k\}_{k=1}^N$ such that $\psi_i(x_k)= \delta_{ik}$. Functions in $Y_h$ can be written in the form $y_h = \sum_{j=1}^N y_j \psi_j$, and we will denote $\bm y = (y_1,\ldots,y_N)^\top$. We name $I$ and $B$ the sets of indexes of interior and boundary nodes respectively. Functions in $U_h$ can be written as $u_h = \sum_{k\in B}u_k\psi_{k\vert \Gamma}$, and in this case $\bm u = (u_1,\ldots,u_{N_\Gamma})^\top$. Let $M$ and $K$ be the mass and stiffness matrices, i.\,e., $M,K\in\mathbb{R}^{N\times N}$ with $m_{ij}=(\psi_j,\psi_i)_\Omega$ and $k_{ij}=(\nabla\psi_j,\nabla\psi_i)_\Omega$.

The restriction of $\mS_h$ (resp. $\mH_h$) to $U_h$ is a linear mapping between finite dimensional spaces. Consequently, there exists a matrix $S$ (resp. $H$) such that $y_h=\mS_h u_h$ if and only if $\bm y = S\bm u$ (resp. $z_h = \mH_h u_h$ if and only if $\bm z = H\bm u$). Therefore, we can write 
\[\functional_h(u_h) = \frac12 \bm u^\top T \bm u - \bm w^\top \bm u+ c,\]
where $T = S^\top M S + \tikhonov H^\top K H$ is a symmetric matrix, $\bm w$ is a vector and $c$ is a constant. The explicit computation of $T$ is out of the question, but, for given $\bm u$,  we can compute $T\bm u$ in an efficient way without computing $T$ itself.

Let $\bm A\in\mathbb{R}^{N\times N}$ with $a_{ij}=\mathfrak{a}(\psi_j,\psi_i)$ be the non-symmetric matrix related to the operator $\mathcal{A}$. Using the notational conventions $\bm A_{IB}$, $K_{B:}$, or $\bm y_I$, for example, to represent submatrices or subvectors, the state and adjoint state equations can be written as
\begin{align}
    \bm A_{II}\bm y_I &= -\bm A_{IB} \bm u, \label{eq::comp::st}\\
    \bm A^\top_{II}\bm \varphi_I &= M_{II}\bm y_I + M_{IB}\bm u - \tilde{\bm y}_I,\notag
\end{align}
where the component $k$ of $\tilde{\bm y}$ is $(y_d,\psi_k)_{L^2(\Omega)}$ and $\bm y_B=\bm u$. The discrete harmonic extension of $u_h$, lets call it $z_h$, can be computed solving
\begin{equation}\label{eq::comp:.he}
K_{II} \bm z_I = - K_{IB} \bm u.
\end{equation}

Using \eqref{eq::discretederivative} with $\mE_h = \mZ_h$, the extension by $0$ in the interior nodes, we can compute
\[
\functional_h'(u_h)v_h = (-\bm A^\top_{BI}\bm\varphi_I + M_{BI}\bm y_I + M_{BB}\bm u + \tikhonov (K_{BI}\bm z_I + K_{BB}\bm u) - \tilde{\bm y}_B - \tikhonov K_{B:}\tilde{\bm z})\cdot \bm v,
\]
where $\bm A^\top_{BI}$ is $(\bm A^\top)_{BI}$ and $\tilde{\bm z}$ is the discrete harmonic extension of an approximation of $u_d$; see Remark \ref{re::R7.a} below. In practice, we do a further splitting $\bm\varphi_I = \bm\phi_I + \tilde{\bm\varphi}_I$, where
\begin{align}
    \bm A^\top_{II}\bm \phi_I &= M_{II}\bm y_I + M_{IB}\bm u,\label{eq::comp::adj}\\
    \bm A^\top_{II}\tilde{\bm \varphi}_I &=  - \tilde{\bm y}_I.\notag
\end{align}
The equation $\functional'(u_h)=0$ hence reads as
\[
-\bm A^\top_{BI}\bm\phi_I + M_{BI}\bm y_I + M_{BB}\bm u + \tikhonov (K_{BI}\bm z_I + K_{BB}\bm u) =  \bm A^\top_{BI}\tilde{\bm\varphi}_I  +\tilde{\bm y}_B - \tikhonov K_{B:}\tilde{\bm z}.
\]
The left hand side is linear in $\bm u$; it is $T\bm u$. The right hand side does not depend on $\bm u$; it is $\bm w$. To compute $T\bm u$, we have to solve the three linear systems \eqref{eq::comp::st}, \eqref{eq::comp:.he} and \eqref{eq::comp::adj}. To do this efficiently, we obtain proper factorizations once using Matlab's \texttt{[L,U,P,Q,D] = lu(A(I,I))} and \texttt{[R,p] = chol(K(I,I),'vector')}. Hence, the computation of $T\bm u$ involves only the resolution of six triangular systems.
Since we have an efficient way to compute $T\bm u$, we use the preconditioned conjugate gradient method to solve the optimization problem.

The code has been done with Matlab R2025a and run on a desktop PC with 32GB of RAM and Windows 11. The meshes have been prepared using functions provided by Johannes Pfefferer. The finite element approximations are obtained with code prepared by us. 
 The optimization of the resulting finite-dimensional quadratic program is done using \texttt{pcg}. The use of graded meshes usually makes advisable the use of a preconditioner; see \cite[Example 3.5]{Mateos2018}. We have found that $\bm A_{B,B}\bm A^\top_{B,B}$ is an appropriate preconditioner for this problem, reducing the computation time for the experiments by up to a 90\% in the finest meshes.

\begin{remark}\label{re::R7.a}
    As noticed in the introductory paragraphs of Section \ref{S5.2}, we do not know any easy means to compute $\mQ_h u_d$. Nevertheless, since $u_d\in W^{3/2,2}_{\vec\beta}(\Gamma)\hookrightarrow C(\Gamma)$, we can use $\mI_h u_d$ and obtain the same order of convergence noting that
    \begin{align*}
        \langle \mD_h u_d - \mD_h\mI_h u_d,v_h\rangle_\Gamma &=  (\nabla \mH_h(u_d-\mI_h u_d),\nabla \mH_h v_h\rangle_\Gamma \\
        &\leq M_{\mH}^2 \Vert u_d-\mI_h u_d\Vert_{H^{1/2}(\Gamma)}\Vert v_h\Vert_{H^{1/2}(\Gamma)} \\
        &\leq M_{\mH}^2M_{\tr} c_{\mI} h^s\Vert u_d\Vert_{W^{3/2,2}_{\vec\beta}(\Gamma)}\Vert v_h\Vert_{H^{1/2}(\Gamma)} \ \forall v_h\in U_h.
    \end{align*}
\end{remark}

\begin{remark}\label{re::R7.1}
Computational details discussed above are similar to those discussed in \cite[Section 5.2]{GongMateosSinglerZhang2022}.
The theory discussed in \cite{OfPhanSteinbach2015} uses a continuous extension operator, but the practical implementation discussed in \cite[Section 3.5]{OfPhanSteinbach2015} uses tacitly the  discrete extension operator $\mZ_h$.
\end{remark}
\begin{remark}
    Another possibility would be to solve the complete optimality system
\[
\left(
\begin{array}{ccc}
 -M_{II}  &  \bm A^\top_{II}  & -M_{IB}\\
  \bm A_{II}  &          & \bm A_{IB} \\
  M_{BI}  & -\bm A^\top_{BI} & M_{BB} + \tikhonov D
\end{array}
\right)
\left(\begin{array}{c}
     \bm y_I  \\
     \bm\varphi_I \\
     \bm u
\end{array}
\right) = 
\left( \begin{array}{c}
     -\tilde{\bm y}_I  \\
     \\
     \tilde{\bm y}_B + \tikhonov K_{B:}\tilde z
\end{array}
\right)
\]
where $D = K_{BB} - K_{BI} K_{II}^{-1} K_{IB}$; cf \cite[eq. (3.32)]{OfPhanSteinbach2015}.

It is possible to avoid the inversion of $K_{II}$ explicitly, including the harmonic extension as an unknown. The system to solve is
\begin{equation}\label{eq::bigsystem}
\left(
\begin{array}{cccc}
K_{II} & & & K_{IB} \\
 &-M_{II}  &  \bm A^\top_{II}  & -M_{IB}\\
 & \bm A_{II}  &          & \bm A_{IB} \\
\tikhonov K_{BI} & M_{BI}  & -\bm A^\top_{BI} & M_{BB} + \tikhonov K_{BB}
\end{array}
\right)
\left(\begin{array}{c}
     \bm z_I  \\
     \bm y_I  \\
     \bm\varphi_I \\
     \bm u
\end{array}
\right) = 
\left( \begin{array}{c}
     \\
     -\tilde{\bm y}_I  \\
     \\
     \tilde{\bm y}_B + \tikhonov K_{B:}\tilde z
\end{array}
\right).
\end{equation}
In the examples described below, we have been able to solve this system using Matlab \texttt{mldivide} up to the refinement level $j=9$, obtaining exactly the same results shown in tables \ref{Table_1b} and \ref{Table_2}. The computation times are significantly bigger than the ones obtained using the preconditioned gradient method described above; see Table \ref{tab:placeholder}. 
\begin{table}[]
    \centering
    \begin{tabular}{c c c}
        $j$ &  Solving \eqref{eq::bigsystem} & Using \texttt{pcg}\\\hline
        7   & 3 s  & 3 s\\
        8   & 14 s & 11 s  \\
        9   & 150 s & 69 s \\
        10 & $\infty$ & 385 s
    \end{tabular}
    \caption{Computation times for the different solving strategies at different refinement levels.}
    \label{tab:placeholder}
\end{table}

\end{remark}

\subsection{Examples}
We present two examples. In both cases we take  the L-shaped domain $\Omega=(-1,1)^2\setminus[0,1]\times[-1,0]$, $A$ the identity matrix, and the regularization parameter $\tikhonov = 0.1$.

We use a family of graded meshes obtained by bisection. The reader is referred to \cite[Section 1.3]{AMPR2019} for a short description of the method and possible alternatives. We obtain a hierarchical family of meshes of size $h_j= 2^{-j}\sqrt{2}$, for $j=1,\ldots,10$; see Figure \ref{Figure_2} for the mesh obtained at the refinement level $j=4$. We also define $n_j = \dim(Y_{h_j}) + \dim(U_{h_j}) + \dim(Y_{0,{h_j}})$. For this kind of meshes, we have that $n_j = O(1/h_j^2)$. 

\begin{figure}
  \centering
  \includegraphics[width=.5\textwidth]{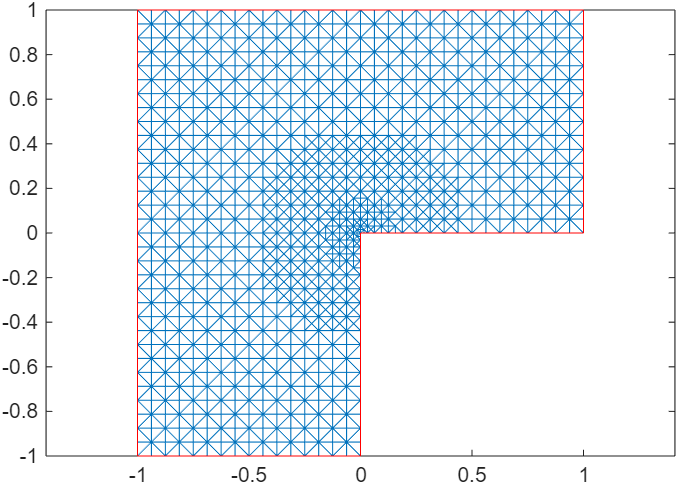}
  \caption{Mesh at the refinement level $j=4$ for $\mu=2/3$}\label{Figure_2}
\end{figure}

\begin{example}\label{ex:1}
We first test the example provided in \cite{PalGudi2026}. Although the resulting bilinear form $\mathfrak{a}(\cdot,\cdot)$ is coercive in $H^1_0(\Omega)\times H^1_0(\Omega)$, it will serve us for test purposes. In this example $b(x_1,x_2)=(-2x_1,-2x_2)$ and $a_0\equiv 4/3$.  Consider $\bar y=r^{2/3}\sin(2\theta/3)$, where $(r,\theta)$ are the polar coordinates, and $\bar\varphi = x_1^2(1-x_1^2)^2x_2^2(1-x_2^2)^2$. Finally, define $y_d = \bar y - ( -\Delta\bar\varphi -\nabla\cdot(b\bar\varphi) + a_0 \bar\varphi)$ and $u_d = \bar y_{\vert \Gamma}$. The solution of the problem is $\bar u=u_d$. 

For the grading parameter $\mu<2/3$, the order of convergence predicted by our results is $s=1$.
We report on  the experimental order of convergence 
\[s_j = -(\log_2(e_j)-\log_2(e_{j-1})),\]
where, at every mesh level, we measure the error
\[e_j = \Vert \bar y_{h_j}-\bar y\Vert_{H^1(\Omega)}+\Vert \bar u_{h_j}-\bar u\Vert_{H^{1/2}(\Gamma)}+\Vert \bar\varphi_{h_j}-\bar \varphi\Vert_ {H^1_0(\Omega)}.\]
The quantity $\Vert \bar u_h-\bar u\Vert_{H^{1/2}(\Gamma)}$ is measured by $\Vert \mH_h\bar u_h -\mH\bar u\Vert_{H^1(\Omega)}$. Since we have the exact solution, we compute the error using a Gauss formula of degree 5 with 7 nodes; see \cite{COOLS2003445}. 
The convergence history can be seen in Table \ref{Table_1} for $\mu = 0.66$ and Table \ref{Table_1b} for $\mu = 0.5$.

The performance is comparable to the one obtained by adaptative mesh refinement using the marker strategy proposed in \cite{Dorfler1996} with parameter $\theta=0.3$: the order of convergence obtained in \cite{PalGudi2026} for the error in $\Vert \bar\eta_h-\bar \eta\Vert_{H^1_0(\Omega)}+\Vert \bar z_h-\bar z\Vert_{H^1(\Omega)}+\Vert \bar \varphi_h-\bar \varphi\Vert_{H^1_0(\Omega)}$ is $N^{-1/2}$, where $N = \dim(Y_{0,h}) + \dim(Y_h) + \dim(Y_{0,h})$ is the number of degrees of freedom of the problem. Here, $\bar z$ is the optimal control variable, which coincides with $\mH \bar u$. Since $\bar y$ is harmonic, we have that $\bar\eta = \eta_{\mH \bar u} = 0$.

\begin{table}
  \centering
  \begin{tabular}{crcrcc}
 $j$ &  $\dim(Y_h)$     &     $\functional_h(\bar u_h)$   &   $n_j$  &  $e_j$   &  $s_j$ \\ \hline
 1 &      24 & 0.16105943 &      48 & \num{8.18e-01} &  \\
 2 &      78 & 0.17880768 &     156 & \num{5.78e-01} & 0.50 \\
 3 &     271 & 0.19746148 &     542 & \num{2.44e-01} & 1.24 \\
 4 &    1011 & 0.20884284 &    2022 & \num{1.12e-01} & 1.13 \\
 5 &    3885 & 0.21215951 &    7770 & \num{5.45e-02} & 1.04 \\
 6 &   15217 & 0.21301700 &   30434 & \num{2.76e-02} & 0.98 \\
 7 &   60206 & 0.21323325 &  120412 & \num{1.42e-02} & 0.96 \\
 8 &  239526 & 0.21328743 &  479052 & \num{7.36e-03} & 0.95 \\ 
 9 &  955679 & 0.21330097 & 1911358 & \num{3.83e-03} & 0.95 \\ 
 10& 3813203 & 0.21330436 & 7626406 & \num{1.98e-03} & 0.95 \\\hline
   &$\functional(\bar u)$ & 0.21330549     
 \end{tabular}
  \caption{Example \ref{ex:1}. Convergence history for $\mu=0.66$}\label{Table_1}
\end{table}

\begin{table}
  \centering
  \begin{tabular}{crcrcc}
 $j$ &  $\dim(Y_h)$     &     $\functional_h(\bar u_h)$   &   $n_j$  &  $e_j$   &  $s_j$ \\ \hline
 1 &      24 & 0.16105943 &      48 & \num{8.18e-01} &  \\
 2 &      81 & 0.17881056 &     162 & \num{5.60e-01} & 0.55 \\
 3 &     294 & 0.19746119 &     588 & \num{2.23e-01} & 1.33 \\
 4 &    1101 & 0.20884268 &    2202 & \num{9.46e-02} & 1.23 \\
 5 &    4229 & 0.21215944 &    8458 & \num{4.28e-02} & 1.14 \\
 6 &   16557 & 0.21301698 &   33114 & \num{2.03e-02} & 1.08 \\
 7 &   65502 & 0.21323324 &  131004 & \num{9.84e-03} & 1.04 \\
 8 &  260541 & 0.21328742 &  521082 & \num{4.85e-03} & 1.02 \\
 9 & 1039418 & 0.21330097 & 2078836 & \num{2.41e-03} & 1.01 \\
10 & 4152036 & 0.21330436 & 8304072 & \num{1.20e-03} & 1.00 \\\hline
   &$\functional(\bar u)$ & 0.21330549     
 \end{tabular}
  \caption{Example \ref{ex:1}. Convergence history for $\mu=0.5$}\label{Table_1b}
\end{table}
\end{example}

\begin{example} \label{ex:2}
Following \cite{AMR2024}, we take $b(x) = \delta r^\alpha(x_1,x_2)$ and $a_0(x) = r^\alpha$, where $r=(x_1^2+x_2^2)^{-1/2}$, $\delta = 6$ and $\alpha = -1.25$. Naming $\x_1=(0,0)$, these coefficients satisfy the regularity assumptions described in Remark \ref{R2.2} for any $\hat p\in(2,8)$ and $\vec\beta = (\beta_1,\ldots,\beta_6)$ with $0.25<\beta_1$ and $\beta_i=0$ for $j=2,\ldots,6$. In particular we can choose $\beta_1 > 1/3 = 1-\lambda_1$. The resulting operator $\mA_0$ is non-coercive. We fix $y_d\equiv 1$ and $u_d\equiv 0$. For the grading parameter $\mu=0.5<2/3$, the order of convergence predicted by our results is $s=1$. Since we do not have the exact solution, we determine the experimental order of convergence measuring the error between consecutive iterations as
\[\hat e_j = \Vert \bar y_{h_j}-\bar y_{h_{j+1}}\Vert_{H^1(\Omega)}+\Vert \bar u_{h_j}-\bar u_{h_{j+1}}\Vert_{H^{1/2}(\Gamma)}+\Vert \bar\varphi_{h_j}-\bar \varphi_{h_{j+1}}\Vert_ {H^1_0(\Omega)}.\]
Notice that although this procedure may yield worse error approximations, it gives a better estimate of the experimental order of convergence; see Remark \ref{re::R7.2} below. The error in the control is measured as $\Vert \bar u_{h_j}-\bar u_{h_{j+1}}\Vert_{H^{1/2}(\Gamma)} =\Vert \mH_{h_{j+1}}(\bar u_{h_j}-\bar u_{h_{j+1}})\Vert_{H^1(\Omega)}$. The convergence history can be seen in Table \ref{Table_2}.
\begin{table}
  \centering
  \begin{tabular}{crcrcc}
 $j$ &  $\dim(Y_h)$     &     $\functional_h(\bar u_h)$   &   $n_j$  &  $\hat e_j$   &  $s_j$ \\ \hline
 1 &      24 & 0.00811026 &      48 & \num{8.85e-01} &      \\ 
 2 &      81 & 0.01222197 &     162 & \num{5.65e-01} &  0.65\\ 
 3 &     294 & 0.01336101 &     588 & \num{2.95e-01} &  0.94\\ 
 4 &    1101 & 0.01365082 &    2202 & \num{1.47e-01} &  1.00\\ 
 5 &    4229 & 0.01373023 &    8458 & \num{7.32e-02} &  1.01\\ 
 6 &   16557 & 0.01375173 &   33114 & \num{3.65e-02} &  1.00\\ 
 7 &   65502 & 0.01375741 &  131004 & \num{1.82e-02} &  1.00\\ 
 8 &  260541 & 0.01375890 &  521082 & \num{9.13e-03} &  1.00\\ 
 9 & 1039418 & 0.01375928 & 2078836 & \num{4.57e-03} &  1.00\\ \hline
10 & 4152036 & 0.01375938 & 8304072 &                &      \\
  \end{tabular}
  \caption{Example \ref{ex:2}. Non-coercive operator. Convergence history for $\mu=0.5$}\label{Table_2}
\end{table}
\end{example}

\begin{remark}\label{re::R7.2}
    In general, we can prove that the order of convergence obtained using the difference between consecutive iterates is the same as the order of convergence of the original sequence. Let $s$ be a positive real number.
If we have the estimate $\Vert z_h-z\Vert \leq C h^s$, then
\[\Vert z_h-z_{h/2}\Vert \leq \Vert z_h-z\Vert +\Vert z_{h/2}-z\Vert \leq C\left(1+\frac{1}{2^s}\right) h^s.\]
On the other hand, suppose $\Vert z_h-z_{h/2}\Vert \leq C h^s$ and take a sequence $h_j = h/2^j$. Since $z_{h_j}\to z$, we can write
\[\Vert z_h-z\Vert \leq \sum_{j=0}^{\infty} \Vert z_{h_j}-z_{h_{j+1}}\Vert  \leq \sum_{j=0}^\infty C h_j^s = C \sum_{j=0}^\infty\frac{1}{2^{s j}} h^s = \frac{C}{1-\frac{1}{2^s}} h^s.\]
Therefore, the order of convergence, which is the quantity of interest, is the same. The reader is invited to experiment with the sequences $h_j = 1/2^j$ and $z_j = h_j^s$ or $z_j = (-1)^j h_j^s$ for $s=0.5$, $s = 1$ and $s = 2$ and to compare the experimental orders of convergence obtained by this method taking $\hat e_j= \Vert z_{j}-z_{j+1}\Vert $ and by the classical method of measuring the error $e_j \approx e_j^*=\Vert z_j - z_{j^*}\Vert $, where, for instance, $j^*=10$ or $j^*=11$, which are usually reference values when $h$ is the mesh size of a finite element mesh in $\mathbb{R}^2$.
\end{remark}

\appendix

\section{Some error estimates for the $H^{1/2}(\Gamma)$-projection}
\label{sec:appA}

\begin{lemma}\label{C5.4}
  If $u\in W^{3/2,2}_{\vec\beta}(\Gamma)$ with $0\leq \beta_j < 1$ for all $j\in\{1,\ldots,m\}$ and if the exponent $s$ satisfies \eqref{eq:sprime}, then there exists a constant $C>0$ independent of $u$ and $h$ such that
  \[\Vert u-\mQ_h u\Vert_{H^{1/2}(\Gamma)} \leq C h^{s}\Vert u\Vert_{W^{3/2,2}_{\vec\beta}(\Gamma)}.\]
\end{lemma}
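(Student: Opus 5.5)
This follows directly from the quasi-optimality of $\mQ_h$ combined with the interpolation estimate on the boundary. By \eqref{eq::LA1}, $\mQ_h$ is quasi-optimal in $H^{1/2}(\Gamma)$:
\[
\Vert u-\mQ_h u\Vert_{H^{1/2}(\Gamma)} \leq M_{\mQ}\inf_{u_h\in U_h}\Vert u-u_h\Vert_{H^{1/2}(\Gamma)}.
\]
It therefore suffices to exhibit one element of $U_h$ approximating $u$ at rate $h^s$.

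The natural candidate is the Lagrange interpolant $\mI_h u$. First check that it is well defined and lies in $U_h$. Since $0\le\beta_j<1$, we have $W^{3/2,2}_{\vec\beta}(\Gamma)\hookrightarrow C(\Gamma)$; this also follows from $W^{2,2}_{\vec\beta}(\Omega)\hookrightarrow C(\bar\Omega)$ applied to any extension. Moreover, $\mI_h u$ coincides with the trace of $\mI_h\mE u$ for any extension $\mE u\in W^{2,2}_{\vec\beta}(\Omega)$, hence $\mI_h u\in U_h=Y_{h\vert\Gamma}$.

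Next, take $u_h=\mI_h u$ in the infimum. Because $s$ satisfies \eqref{eq:sprime}, estimate \eqref{eq:interpolation_error_Gamma} of Lemma \ref{L5.1interpolation} applies and gives
\[
\Vert u-\mQ_h u\Vert_{H^{1/2}(\Gamma)}\le M_{\mQ}\Vert u-\mI_h u\Vert_{H^{1/2}(\Gamma)}\le M_{\mQ}M_{\tr}c_{\mI}h^s\Vert u\Vert_{W^{3/2,2}_{\vec\beta}(\Gamma)}.
\]
This is the claim with $C=M_{\mQ}M_{\tr}c_{\mI}$. The constant $C$ is independent of $h$ and $u$, though possibly dependent on $\vec\mu$ through $c_{\mI}$.

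There is no real obstacle here. The only point needing care is the continuity of $u$ that makes $\mI_h u$ meaningful; it follows from the weighted Sobolev embedding for $\beta_j<1$, which Lemma \ref{L5.1interpolation} already uses implicitly.
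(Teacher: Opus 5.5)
Your proof is correct and follows essentially the same route as the paper: the quasi-optimality of $\mQ_h$ combined with the boundary interpolation estimate \eqref{eq:interpolation_error_Gamma} for $\mI_h u$, after checking that $u$ is continuous. The only difference is the constant. The paper bounds $\Vert u-\mQ_h u\Vert_{H^{1/2}(\Gamma)}$ by $\Vert u-\mI_h u\Vert_{H^{1/2}(\Gamma)}$ with constant $1$, which strictly holds only in the norm induced by $\mathfrak{p}$; your use of \eqref{eq::LA1} with the factor $M_{\mQ}$ is the more careful statement in the norm \eqref{E3.1}.
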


\begin{proof}
  The conditions on $\vec\beta$ imply that $u\in H^{3/2-\beta}(\Gamma)\hookrightarrow C(\Gamma)$ with $\beta=\max_j\beta_j$, so the nodal interpolant $\mI_h u$ is well defined. Hence, from the interpolation error estimate \eqref{eq:interpolation_error_Gamma} we obtain
   \[\Vert u-\mQ_h u\Vert_{H^{1/2}(\Gamma)} \leq \Vert u-\mI_h u\Vert_{H^{1/2}(\Gamma)} \leq C h^{s}\Vert u\Vert_{W^{3/2,2}_{\vec\beta}(\Gamma)},\]
and the proof is complete.\end{proof}

\begin{lemma}\label{T5.6XXX}
  For every $u\in H^{1/2}(\Gamma)$ and all $s\leq 1$ such that $s <\lambda_{\Delta,j}/\mu_j$,  for all $j\in\{1,\ldots,m\}$
  \[\Vert u-\mQ_h u\Vert_{L^2(\Gamma)} \leq C h^{s/2} \Vert u-\mQ_h u\Vert_{H^{1/2}(\Gamma)}.\]
\end{lemma}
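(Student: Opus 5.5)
The estimate follows from two ingredients: the negative-norm bound of Theorem \ref{T5.9} and an interpolation inequality between $\left(\Wsj\right)'$ and $H^{1/2}(\Gamma)$. Write $e=u-\mQ_h u$. Formally $L^2(\Gamma)$ sits halfway between $H^{-1/2}(\Gamma)$ and $H^{1/2}(\Gamma)$, which suggests
\[
\Vert e\Vert_{L^2(\Gamma)}^2\le \Vert e\Vert_{X'}\,\Vert e\Vert_{X},
\]
with $X=\Wsj$, or a slightly weaker replacement of it. Theorem \ref{T5.9} gives $\Vert e\Vert_{X'}\le \tilde c h^s\Vert e\Vert_{H^{1/2}(\Gamma)}$. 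If we also have $\Vert e\Vert_X\le c\Vert e\Vert_{H^{1/2}(\Gamma)}$, taking square roots yields the claimed $h^{s/2}$ bound.

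To obtain the interpolation inequality cleanly, avoid operator interpolation theory and argue by duality. Choose $g=e$ itself as test function in the definition of the dual norm:
\[
\Vert e\Vert_{L^2(\Gamma)}^2=(e,e)_\Gamma\le \Vert e\Vert_{X'}\sum_{j=1}^m\Vert e\Vert_{W^{1/2,2}_{\vec\beta}(\Gamma_j)}.
\]
This is legitimate as soon as $e\in X$ with $\sum_j\Vert e\Vert_{W^{1/2,2}_{\vec\beta}(\Gamma_j)}\le c\Vert e\Vert_{H^{1/2}(\Gamma)}$. That embedding is plausible because $H^{1/2}(\Gamma)$ restricted to each side $\Gamma_j$ lies in $H^{1/2}(\Gamma_j)$, and the weight $r_j^{\beta_j}$ with $\beta_j\ge0$ is bounded. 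Concretely, take an extension $z\in H^1(\Omega)$ of $e$. For $\beta_j\ge0$ and $R_j\le1$, $z\in W^{1,2}_{\vec\beta}(\Omega)$ with $\Vert z\Vert_{W^{1,2}_{\vec\beta}}\le c\Vert z\Vert_{H^1}$, since the cut-off functions are smooth. Taking the trace on each side and the infimum over extensions then gives the bound, by the trace-norm definition \eqref{eq::tracenorm} with $k=1$.

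The $\vec\beta$ used in Theorem \ref{T5.9} must satisfy $\beta_j\in[0,1)$ and $s\le(1-\beta_j)/\mu_j<\lambda_{\Delta,j}/\mu_j$. Because $s<\lambda_{\Delta,j}/\mu_j$, such a $\vec\beta$ exists: pick $\beta_j$ with $\max\{0,1-\lambda_{\Delta,j}\}<\beta_j\le \max\{0,1-s\mu_j\}$, which is possible since $1-s\mu_j>1-\lambda_{\Delta,j}$. Combining the two bounds gives
\[
\Vert e\Vert_{L^2(\Gamma)}^2\le c\tilde c h^s\Vert e\Vert_{H^{1/2}(\Gamma)}^2,
\]
and the lemma follows with $C=(c\tilde c)^{1/2}$.

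The main point to check carefully is the embedding $H^{1/2}(\Gamma)\hookrightarrow W^{1/2,2}_{\vec\beta}(\Gamma_j)$ with a uniform constant. This includes confirming that the $W^{1/2,2}_{\vec\beta}(\Gamma_j)$ norm on a single side is the trace norm of $W^{1,2}_{\vec\beta}$ and is not larger. If that turns out delicate, a fallback is to note $W^{1,2}_{\vec\beta}\supset H^1$ when $\beta_j\ge0$ and use the equivalent intrinsic norm from \cite[Lemma 6.1.2]{KozlovMazyaRossmann1997}.
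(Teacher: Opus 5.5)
Your argument is correct and is essentially the paper's proof. The paper runs the duality argument of Theorem \ref{T5.9} directly with $g=u-\mQ_h u$, using Theorem \ref{L5.5} for $w_g$, the interpolation estimate, and the same embedding $H^{1/2}(\Gamma)\hookrightarrow W^{1/2,2}_{\vec\beta}(\Gamma)$. You instead invoke Theorem \ref{T5.9} as a black box and then test its dual norm with $g=e$, which is the same computation packaged differently.

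The only slip is cosmetic. When $\lambda_{\Delta,j}>1$ and $s\mu_j=1$, your interval $(\max\{0,1-\lambda_{\Delta,j}\},\,1-s\mu_j]$ is empty, yet $\beta_j=0$ is admissible there. So the condition should read $\beta_j\ge 0$ and $\beta_j>1-\lambda_{\Delta,j}$.
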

\begin{proof}
For every $j\in\{1,\ldots,m\}$ we can always choose a $\beta_j\in[0,1)$ such that $s< (1-\beta_j)/\mu_j <\lambda_{\Delta,j}/\mu_j$.
Consider $g\in H^{1/2}(\Gamma)\hookrightarrow W^{1/2,2}_{\vec\beta}(\Gamma)$.
  From Theorem \ref{L5.5} we deduce the existence of a unique $w_g\in H^{1/2}(\Gamma)$ solution of the dual problem
  \[\mathfrak{p}(v,w_g) = (g, v)_\Gamma\ \forall v\in H^{1/2}(\Gamma) .\]
  Furthermore,  $w_g\in W^{3/2,2}_{\vec\beta}(\Gamma)$ and
   \[\Vert w_g\Vert_{W^{3/2,2}_{\vec\beta}(\Gamma)} 
   \leq C \Vert g\Vert_{W^{1/2,2}_{\vec\beta}(\Gamma)}
   \leq C \Vert g\Vert_{H^{1/2}(\Gamma)}.\]
  Therefore, with $g=u-\mQ_hu\in H^{1/2}(\Gamma)\hookrightarrow H^{-1/2}(\Gamma)$
  \begin{align*}
    \Vert u-\mQ_h u\Vert_{L^2(\Gamma)}^2 &= \mathfrak{p}(u-\mQ_h u,w_g) =  \mathfrak{p}(u-\mQ_h u,w_g-\mI_h w_g) \\ &\leq C \Vert u-\mQ_h u\Vert_{H^{1/2}(\Gamma)} \Vert w_g-\mI_h w_g\Vert_{H^{1/2}(\Gamma)}\\
    &\leq  C\Vert u-\mQ_h u\Vert_{H^{1/2}(\Gamma)} h^{s} \Vert w_g\Vert_{W^{3/2,2}_{\vec\beta}(\Gamma)} \leq C \Vert u-\mQ_h u\Vert_{H^{1/2}(\Gamma)}^2 h^s,
  \end{align*}
  and the proof concludes taking the square root.
\end{proof}

\begin{corollary}
  If $u\in W^{3/2,2}_{\vec\beta}(\Gamma)$ for $\vec\beta$ such that $1-\lambda_{\Delta,j}< \beta_j < 1$ and $\beta_j\geq 0$ for all $j\in\{1,\ldots,m\}$, then for $s\leq 1$, $s<\lambda_{\Delta,j}/\mu_j$ for all $j\in\{1,\ldots,m\}$, there exists a constant $C>0$ independent of $u$ and $h$ such that
  \[\Vert u-\mQ_h u\Vert_{L^{2}(\Gamma)} \leq C h^{3s/2}\Vert u\Vert_{W^{3/2,2}_{\vec\beta}(\Gamma)}.\]
\end{corollary}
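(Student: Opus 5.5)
The natural route is simply to chain the two preceding appendix results. By Lemma \ref{T5.6XXX}, applied with the same exponent $s\leq 1$, $s<\lambda_{\Delta,j}/\mu_j$, we have
\[
\Vert u-\mQ_h u\Vert_{L^2(\Gamma)} \leq C h^{s/2}\Vert u-\mQ_h u\Vert_{H^{1/2}(\Gamma)} .
\]
It then remains to bound the $H^{1/2}(\Gamma)$ error by $Ch^{s}\Vert u\Vert_{W^{3/2,2}_{\vec\beta}(\Gamma)}$, which is the content of Lemma \ref{C5.4}. Multiplying the two factors gives the order $h^{s/2}h^{s}=h^{3s/2}$.

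The only point requiring care is the compatibility of the exponents, because the two lemmas impose slightly different conditions on $s$. Lemma \ref{C5.4} requires $s\leq 1$ and $s\leq (1-\beta_j)/\mu_j$ for the given $\vec\beta$, whereas the corollary only assumes $s<\lambda_{\Delta,j}/\mu_j$ together with $1-\lambda_{\Delta,j}<\beta_j$. Since $u\in W^{3/2,2}_{\vec\beta}(\Gamma)$ and the weighted spaces are nested, $W^{3/2,2}_{\vec\beta}(\Gamma)\hookrightarrow W^{3/2,2}_{\vec\beta'}(\Gamma)$ for $\beta'_j\geq\beta_j$, with $\Vert u\Vert_{W^{3/2,2}_{\vec\beta'}(\Gamma)}\leq c\Vert u\Vert_{W^{3/2,2}_{\vec\beta}(\Gamma)}$ because $r_j\leq 1$ in $\Omega_{R_j}$. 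If $s\leq (1-\beta_j)/\mu_j$ already holds for every $j$, I apply Lemma \ref{C5.4} directly with $\vec\beta$. Otherwise, I would read the corollary as intended under the standing convention of the section, namely that $\vec\beta$ is chosen with $s\leq(1-\beta_j)/\mu_j$ in addition to the stated conditions. This is the same convention under which Theorem \ref{T5.9} and Lemma \ref{L5.9} are formulated.

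Alternatively, one can avoid Lemma \ref{T5.6XXX} and argue directly by duality, as in the proof of that lemma. Take $g=u-\mQ_h u$ and $w_g$ from Theorem \ref{L5.5}. Then
\[
\Vert u-\mQ_hu\Vert_{L^2(\Gamma)}^2=\mathfrak{p}(u-\mQ_hu,w_g-\mI_hw_g)\leq C\Vert u-\mQ_hu\Vert_{H^{1/2}(\Gamma)}\,h^s\Vert w_g\Vert_{W^{3/2,2}_{\vec\beta}(\Gamma)} .
\]
Using $\Vert w_g\Vert_{W^{3/2,2}_{\vec\beta}(\Gamma)}\leq C\Vert g\Vert_{H^{1/2}(\Gamma)}$ and Lemma \ref{C5.4} then yields $\Vert u-\mQ_hu\Vert_{L^2(\Gamma)}^2\leq Ch^{3s}\Vert u\Vert^2_{W^{3/2,2}_{\vec\beta}(\Gamma)}$, and taking square roots gives the claim.

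The main obstacle is therefore only the bookkeeping of $\vec\beta$ and $s$: the $\vec\beta$ used in the dual problem and the one in the regularity of $u$ must each satisfy their own constraints. I would handle this by choosing the dual weight independently, exactly as in the proof of Lemma \ref{T5.6XXX}.
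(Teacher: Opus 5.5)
Your proposal is correct and matches the paper's proof, which simply chains Lemma~\ref{T5.6XXX} (the factor $h^{s/2}$ in front of $\Vert u-\mQ_h u\Vert_{H^{1/2}(\Gamma)}$) with Lemma~\ref{C5.4} (the factor $h^{s}\Vert u\Vert_{W^{3/2,2}_{\vec\beta}(\Gamma)}$). You are also right that the second step tacitly requires $s\le(1-\beta_j)/\mu_j$ for the given $\vec\beta$, and this condition is genuinely needed for fixed $\vec\beta$: your nesting remark cannot supply it, since enlarging $\beta_j$ only decreases $(1-\beta_j)/\mu_j$, and e.g.\ on quasi-uniform meshes of a convex polygon with $s=1$ and $\beta_j=1/2$ the trace of $r_j^{1/2+\varepsilon}\chi_j$ lies in $W^{3/2,2}_{\vec\beta}(\Gamma)$, yet its $L^2(\Gamma)$ distance to $U_h$ is at least $c\,h^{1+\varepsilon}$ (already on the boundary element touching the vertex), not $O(h^{3/2})$.
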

\begin{proof}
  The result is a straightforward consequence of lemmata \ref{C5.4} and \ref{T5.6XXX}.
\end{proof}

\section{The Deny-Lions Lemma in $H^{1/2}(\Gamma)$}
Let us recall the notation
\[\vert u \vert_{b} = \left(\int_\Gamma\int_\Gamma \frac{(u(x)-u(y))^2}{\vert x-y\vert ^{2}}\dx\mathrm{d}y\right)^{1/2}\text{ and } \Vert u \Vert_{b}^2 = \Vert u \Vert_{L^2(\Gamma)}^2 +\vert u \vert_{b}^2.\]
\begin{lemma}\label{LB1}
  There exists a constant $C_\Gamma>0$ such that
  \[\inf_{c\in \mathbb{R}}\Vert u-c \Vert_{b} \leq C_\Gamma \vert u \vert_{b}\ \forall u\in H^{1/2}(\Gamma).\]
\end{lemma}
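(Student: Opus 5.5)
The plan is to take the mean value as the constant and prove a Poincaré-type inequality directly from the double integral, which is the elementary argument the paper promises. Let $c_u = \frac{1}{\vert\Gamma\vert}\int_\Gamma u\,\mathrm{d}y$. Since $\vert u-c\vert_b=\vert u\vert_b$ for every constant $c$, it suffices to show
\[
\Vert u-c_u\Vert_{L^2(\Gamma)}\leq C\vert u\vert_b .
\]
Then we can take $C_\Gamma^2=1+C^2$.

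For the $L^2$ bound, first write $u(x)-c_u=\frac{1}{\vert\Gamma\vert}\int_\Gamma (u(x)-u(y))\,\mathrm{d}y$ for a.e.\ $x\in\Gamma$. The Cauchy--Schwarz inequality in $y$ gives
\[
(u(x)-c_u)^2\leq \frac{1}{\vert\Gamma\vert}\int_\Gamma (u(x)-u(y))^2\,\mathrm{d}y .
\]
Next use that $\Gamma$ is bounded, so $\vert x-y\vert\leq d:=\mathrm{diam}(\Gamma)$ for all $x,y\in\Gamma$. This yields $(u(x)-u(y))^2\leq d^2\,(u(x)-u(y))^2/\vert x-y\vert^2$. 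Integrating in $x$ we obtain
\[
\Vert u-c_u\Vert_{L^2(\Gamma)}^2\leq \frac{d^2}{\vert\Gamma\vert}\int_\Gamma\int_\Gamma\frac{(u(x)-u(y))^2}{\vert x-y\vert^2}\dx\,\mathrm{d}y=\frac{d^2}{\vert\Gamma\vert}\vert u\vert_b^2 .
\]
This gives $C=d/\sqrt{\vert\Gamma\vert}$.

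The only points that need care are measure-theoretic. First, the mean $c_u$ is well defined because $H^{1/2}(\Gamma)\subset L^2(\Gamma)\subset L^1(\Gamma)$, as $\Gamma$ has finite length. Second, Tonelli's theorem justifies exchanging the order of integration, since all integrands are nonnegative. Third, the bound $\vert x-y\vert\leq d$ holds pointwise on $\Gamma\times\Gamma$.

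I do not expect a genuine obstacle here. Unlike the classical Deny--Lions argument, no compactness is needed, because the kernel $\vert x-y\vert^{-2}$ is bounded below by $d^{-2}$ on the bounded set $\Gamma\times\Gamma$. The step closest to being delicate is checking that $\vert u\vert_b$ is taken over the whole polygonal boundary $\Gamma$, including pairs of points on different sides. The constant function is then the only element of the kernel of $\vert\cdot\vert_b$, and the estimate is global, as required in the proof of Lemma \ref{le::Lemma3.1}.
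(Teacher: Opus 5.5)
Your proof is correct, and it takes a genuinely different route from the paper.

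The paper argues by contradiction, in the style of the classical Deny--Lions/Peetre--Tartar lemma. It first proves $\Vert u\Vert_b\le C_\Gamma\bigl(\vert u\vert_b+\vert\int_\Gamma u\,\mathrm{d}x\vert\bigr)$. To do so it takes a normalized sequence with $\vert v_n\vert_b+\vert\int_\Gamma v_n\,\mathrm{d}x\vert\to 0$ and extracts a weakly convergent subsequence. The compact embedding $H^{1/2}(\Gamma)\hookrightarrow L^2(\Gamma)$, together with $\vert v_n\vert_b\to0$, then gives strong convergence in $H^{1/2}(\Gamma)$. The limit is therefore a constant with zero mean, i.e. zero, which contradicts the normalization. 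Only afterwards does the paper subtract the mean value, exactly as you do.

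You replace the whole contradiction step by a direct computation. You write $u(x)-c_u$ as an average of differences and apply Cauchy--Schwarz. You then use that the kernel $\vert x-y\vert^{-2}$ is bounded below by $\mathrm{diam}(\Gamma)^{-2}$ on $\Gamma\times\Gamma$.

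Your route gives an explicit constant, $C_\Gamma=\bigl(1+\mathrm{diam}(\Gamma)^2/\vert\Gamma\vert\bigr)^{1/2}$. It needs no compactness and nothing about $\Gamma$ beyond boundedness and finite positive length. So it is arguably closer to the ``elementary proof'' the paper announces.

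The compactness argument gives a non-explicit constant, but it is the more portable template. It applies to any seminorm that controls the norm modulo a finite-dimensional kernel, given a compact embedding for the lower-order part. Examples are derivative-based seminorms such as $\Vert\nabla\mathcal{H}u\Vert_{L^2(\Omega)}$, or higher-order seminorms with polynomial kernels, where no pointwise double-integral representation like yours is available.
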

\begin{proof}
  First we show the existence of $C_\Gamma>0$ such that
  \begin{equation}\label{EB2}
    \Vert u\Vert_{b}\leq C_{\Gamma} \left(\vert u \vert_{b} + \left\vert  \int_\Gamma u\dx\right\vert \right)\ \forall u\in H^{1/2}(\Gamma).
  \end{equation}
  Assume this is false. Then, there exists a sequence $(v_n)\subset H^{1/2}(\Gamma)$ such that 
  \begin{equation}\label{EB3}
    \Vert v_n\Vert_{b} = 1\text{ and }\lim_{n\to\infty} \left( \vert v_n \vert_{b} +  \left\vert \int_\Gamma v_n\dx\right\vert \right) = 0.
  \end{equation}
  Since $(v_n)$ is a bounded sequence, there exists $v\in H^{1/2}(\Gamma)$ and a subsequence such that $v_n\rightharpoonup v$ weakly in $H^{1/2}(\Gamma)$. From the compactness of the embedding of $H^{1/2}(\Gamma)$ in $L^2(\Gamma)$, this convergence is strong in $L^2(\Omega)$. Furthermore, from \eqref{EB3} we have that $\vert v_n \vert_{b}\to 0$. Then $(v_n)$ is a Cauchy sequence in $H^{1/2}(\Gamma)$, which is a complete metric space, and hence $(v_n)$ converges strongly in $H^{1/2}(\Gamma)$ to $v$. Therefore
  \[\int_\Gamma\int_\Gamma \frac{(v(x)-v(y))^2}{\vert x-y\vert ^{2}}\dx\mathrm{d}y = \vert v\vert _{b}^2 = \lim_{n\to\infty}\vert v_n\vert _{b}^2 =0,\]
  and hence $v$ is constant. But using the strong convergence of $v_n$ to $v$ and \eqref{EB3} we have that
  \[\int_\Gamma v\dx = \lim_{n\to\infty} \int_\Gamma v_n\dx =0.\]
  Since $v$ is constant, this implies that $v=0$, which contradicts the fact that $\Vert v_n\Vert_{b}=1$.
  
  The rest is standard. For a given $u\in H^{1/2}(\Gamma)$, define $\tilde c = \frac{1}{\vert \Gamma\vert }\int_\Gamma u \dx\in\mathbb{R}$. Applying \eqref{EB2} to $u-\tilde c$ we obtain
  \begin{align*}
  \inf_{c\in \mathbb{R}}\Vert u-c \Vert_{b} & \leq 
  \Vert u-\tilde c \Vert_{b} \leq C_\Gamma
  \left( \vert u-\tilde c \vert_{b} +  \left\vert  \int_\Gamma (u-\tilde c) \dx\right\vert \right) = C_\Gamma
   \vert u\vert_{b}
  \end{align*}
  which is the desired result.
\end{proof}


\end{document}